\theoremstyle{plain}
\newtheorem{thm}{Theorem}[subsection]
\newtheorem*{thm*}{Theorem}
\newtheorem*{thm1}{Theorem 1}
\newtheorem*{thm2}{Theorem 2}
\newtheorem{prop}{Proposition}[subsection]
\newtheorem{lemma}{Lemma}[subsection]
\newtheorem*{lemma*}{Lemma}
\newtheorem{coro}{Corollary}[subsection]
\newtheorem*{coro*}{Corollary}
\newtheorem{conj}{Conjecture}[subsection]
\theoremstyle{definition}
\newtheorem{dfn}{Definition}[subsection]
\newtheorem*{dfn*}{Definition}
\newtheorem{rem}{Remark}[subsection]
\newtheorem*{rem*}{Remark} 
\newtheorem*{ex*}{Example}
\newtheorem{hyp}{Hypothesis}[subsection]
\newcommand{\A}{\mathbb A}
\newcommand{\C}{\mathbb{C}}
\newcommand{\R}{\mathbb R}
\newcommand{\Z}{\mathbb Z}
\newcommand{\Q}{\mathbb Q}
\newcommand{\Gal}{\operatorname{Gal}}
\newcommand{\Hom}{\operatorname{Hom}}
\newcommand{\Aut}{\operatorname{Aut}}
\newcommand{\Res}{\operatorname{Res}}
\newcommand{\GL}{\operatorname{GL}}
\newcommand{\Gm}[1]{\mathbb{G}_{\operatorname{m},#1}}
\newcommand{\St}{\operatorname{St}}
\newcommand{\Sh}{\operatorname{Sh}}
\newcommand{\hol}{\operatorname{hol}}
\newcommand{\can}{\operatorname{can}}
\newcommand{\mot}{\operatorname{mot}}
\newcommand{\cm}{F}
\newcommand{\tr}{F^{+}}
\newcommand*\diff{\mathop{}\!\mathrm{d}}
\newcommand{\AQ}{\mathbb{A}}
\newcommand{\Acm}{\mathbb{A}_{F}}
\newcommand{\Atr}{\mathbb{A}_{F^{+}}}
\newcommand{\Icm}{\mathcal{I}_{F}}
\newcommand{\Pet}{\operatorname{Pet}}
\numberwithin{equation}{subsection}
\begin{document} 

\title[]{Galois equivariance of critical values of $L$-functions for unitary groups}
\author{Lucio Guerberoff}
\address{Department of Mathematics, University College London, 25 Gordon Street, London WC1H 0AY, UK}
\email{l.guerberoff@ucl.ac.uk}
\author{Jie Lin}
\address{Institute des Hautes \'Etudes Scientifiques, 91440 Bures-sur-Yvette, France}
\email{linjie@ihes.fr}
\subjclass[2010]{11F67 (Primary) 11F70, 11G18, 11R39, 22E55 (Secondary). }

\thanks{J.L. was supported by the European Community's Seventh Framework Programme (FP7/2007-2013) / ERC Grant agreement no. 290766 (AAMOT)}

\begin{abstract} The goal of this paper is to provide a refinement of a formula proved by the first author which expresses some critical values of automorphic $L$-functions on unitary groups as Petersson norms of automorphic forms. Here we provide a Galois equivariant version of the formula. We also give some applications to special values of automorphic representations of $\GL_{n}\times\GL_{1}$. We show that our results are compatible with Deligne's conjecture. 
\end{abstract}

\maketitle

\tableofcontents

\section{Introduction}\label{sec:intro}

In the present paper we provide a Galois equivariant version of a formula for the critical values of $L$-functions of cohomological automorphic representations of unitary groups. Such formula expresses the critical values in terms of Petersson norms of holomorphic automorphic forms, and was proved by Harris (\cite{harriscrelle}) when the base field is $\Q$, and by the first author when the base field is a general totally real field (\cite{guerbperiods}). To state the main theorem, we need to introduce some notation. Let $\cm/\tr$ be a CM extension, and let $G$ be a similitude unitary group attached to an $n$-dimensional hermitian vector space over $\cm$. Fix a CM type $\Phi$ for $\cm/\tr$, and let $(r_{\tau},s_{\tau})_{\tau \in \Phi}$ be the signature of $G$. Let $\pi$ be a cohomological, cuspidal automorphic representation of $G(\A)$. The weight of $\pi$ can be parametrized by a tuple of integers $\left((a_{\tau,1},\dots,a_{\tau,n})_{\tau\in\Phi};a_{0}\right)$. We let $\psi$ be an algebraic Hecke character of $\cm$, with infinity type $(m_{\tau})_{\tau:\cm\hookrightarrow\C}$. Under some additional hypotheses on $\pi$, it is proved in Theorem 4.5.1 of \cite{guerbperiods} that
\begin{eqnarray}
\label{formula up to FGal} &L^{S}\left(m-\frac{n-1}{2},\pi\otimes\psi,\St\right)&\\
\nonumber &\sim_{E(\pi,\psi);\cm^{\Gal}}(2\pi i)^{[\tr:\Q](mn-n(n-1)/2)-2a_{0}}D_{\tr}^{\lfloor\frac{n+1}{2}\rfloor/2}P(\psi)Q^{\hol}(\pi) &
\end{eqnarray}
for certain integers $m>n$ satisfying an inequality determined by the signatures of $G$ and the weight of $\pi$. In this expression, $\sim_{E(\pi,\psi);\cm^{\Gal}}$ means that the elements on each side, which belong to $E(\pi,\psi)\otimes\C$, differ by an element of $E(\pi,\psi)\otimes\cm^{\Gal}$. Here $E(\pi,\psi)=E(\pi)\otimes E(\psi)$, where $E(\pi)$ and $E(\psi)$ are certain number fields explicitly attached to $\pi$ and $\psi$, and $\cm^{\Gal}$ is the Galois closure of $\cm$ in $\C$. For simplicity we assume that $E(\pi)$ contains $\cm^{\Gal}$ in the following. The element $P(\psi)$ is an explicit expression involving CM periods attached to $\psi$, and $Q^{\hol}(\pi)$ is an automorphic quadratic period, which is basically given as the Petersson norm of an arithmetic holomorphic vector in $\pi$. It turns out that, up to multiplication by an element in $E(\pi,\psi)\otimes\cm^{\Gal}$, the product $(2\pi i)^{-2a_{0}}P(\psi)Q^{\hol}(\pi)$ can be seen as the inverse of a Petersson norm of an arithmetic vector in $\pi\otimes\psi$ contributing to antiholomorphic cohomology. In this paper, we will consider a Galois equivariant version of formula (\ref{formula up to FGal}) when using these inverse Petersson norms, which we denote by $Q(\pi,\psi)$ in this introduction, for fixed choices of arithmetic vectors; we refer the reader to Subsection \ref{subsection: petersson} for more details. Galois equivariance means that we obtain a formula up to factors in $E(\pi,\psi)$ instead of $E(\pi,\psi)\otimes\cm^{\Gal}$. We also incorporate an auxiliary algebraic Hecke character $\alpha$, which will provide useful applications. The infinity type of $\alpha$ will be assumed to be given by an integer $\kappa$ at all places of $\Phi$, and by $0$ at places outside $\Phi$.

The formula (\ref{formula up to FGal}) is proved using the doubling method, and it relies on a detailed analysis of certain global and local zeta integrals. In this paper, we study the action of $\Gal(\bar\Q/\Q)$ on these zeta integrals. The global and the finite zeta integrals are not hard to analyse, but the archimedean zeta integral is subtler. This integral depends on certain choices that will not be explicited in this introduction, but most importantly, it depends on $\pi$, $\psi$, $\alpha$ and the integer $m$. We denote it by $Z_{\infty}(m;\pi,\psi,\alpha)$ here.  Garrett proved in \cite{garrett} that $Z_{\infty}(m;\pi,\psi,\alpha)$ is non-zero and belongs to $F^{\Gal}$, so it doesn't appear in (\ref{formula up to FGal}), but at the moment we must include it in our Galois equivariant formulation.

Besides the archimedean integral, there is another factor that needs to be added to (\ref{formula up to FGal}) to obtain a Galois equivariant version, which is not originally visible since it belongs to $\cm^{\Gal}$. To the quadratic extension $\cm/\tr$, there is attached a quadratic character $\varepsilon_{\cm}$ of $\Gal(\bar\cm/\tr)$ and an Artin motive $[\varepsilon_{\cm}]$ over $\tr$. We let $\delta[\varepsilon_{\cm}]$ be the period of this motive, an element of $\C^{\times}$ well defined up to multiplication by an element in $\Q^{\times}$. It can also be seen as $c^{-}[\varepsilon_{L}]$. When $\tr=\Q$, it can be explicitly written down as a classical Gauss sum. In any case, $\delta[\varepsilon_{\cm}]\in\cm^{\Gal}$.


We then define
\[ \mathfrak{Q}(m;\pi,\psi,\alpha)=\frac{Q(\pi,\psi,\alpha)}{Z_{\infty}(m;\pi,\psi,\alpha)}. \]
We can define $L^{*}(s,\pi\otimes\psi,\St,\alpha)\in E(\pi,\psi,\alpha)\otimes\C$ to be the collection of standard $L$-functions of ${}^{\sigma}\pi\otimes{}^{\sigma}\psi$, twisted by ${}^{\sigma}\alpha$, for $\sigma:E(\pi,\psi,\alpha)\hookrightarrow\C$. The automorphic representation ${}^{\sigma}\pi$ and the Hecke characters ${}^{\sigma}\psi$ and ${}^{\sigma}\alpha$ are obtained from $\pi$, $\psi$ and $\alpha$ by conjugation by $\sigma$ (see Subsections \ref{subsection: conjugation} and \ref{subsection: algebraic Hecke} for details). We can similarly define $\mathfrak{Q}^{*}(m;\pi,\psi,\alpha)\in E(\pi,\psi,\alpha)\otimes\C$. The main result of this paper is the following.

\begin{thm1} Keep the assumptions as above. Let $m>n-\frac{\kappa}{2}$ be an integer satisfying inequality (\ref{mainineq}). Then
	\[ L^{*,S}\left(m-\frac{n}{2},\pi\otimes\psi,\St,\alpha\right)\sim_{E(\pi,\psi,\alpha)}\]\[(2\pi i)^{[\tr:\Q](mn-n(n-1)/2)}D_{\tr}^{-\lfloor\frac{n}{2}\rfloor/2}\delta[\varepsilon_{\cm}]^{\lfloor\frac{n}{2}\rfloor}\mathfrak{Q}^{*}(m;\pi,\psi,\alpha).\]
\end{thm1}
The presence of $m$ in the element  $\mathfrak{Q}^{*}(m;\pi,\psi,\alpha)$ is, as we explained above, due to the difficulty in analyzing the Galois action on the archimedean zeta integrals. If we all factors in $E(\pi,\psi,\alpha)\otimes\cm^{\Gal}$, then we can replace $\mathfrak{Q}^{*}(m;\pi,\psi,\alpha)$ with the period $(2\pi i)^{-2a_{0}}P(\psi;\alpha)Q^{\hol}(\pi)$, which becomes formula (\ref{formula up to FGal}) when $\alpha$ is trivial. In any case, we can at least stress that the dependence on $m$ of $\mathfrak{Q}(m;\pi,\psi,\alpha)\in E(\pi,\psi,\alpha)\otimes\C$ disappears if we see it modulo $E(\pi,\psi,\alpha)\otimes\cm^{\Gal}$.

As application, we consider the base change of $\pi$ and get a theorem on critical values of Rankin-Selberg $L$-function for $GL_{n}\times GL_{1}$ over a CM field. This is stated as a hypothesis in the thesis of the second author, and is used to prove an interesting factorisation of automorphic periods. It is also used there to deduce more results on critical values of Rankin-Selberg $L$-function.

Let $\Pi$ be a cuspidal automorphic representation of $GL_{n}(\Acm)$. We assume that for any $I\in \{0,1,\cdots.n\}^{\Phi}$ there exists $U_{I}$, a unitary group of rank $n$ with respect to $\cm/\tr$ of signature $(n-I_{\tau},I_{\tau})_{\tau \in \Phi}$, such that the representation $\Pi$ descends to an automorphic representation of $U_{I}$ which can be extended to an automorphic representation of the rational similitude unitary group $GU_{I}$ as in the above theorem. We define an automorphic period $P^{(I)}(\Pi)$ for each $I$. 
\begin{thm2} 
Let $\Pi$ be as above and $\eta$ be a Hecke character as in Theorem \ref{n*1}. There exists an explicit signature $I:=I(\Pi,\eta)$ such that if an integer $m\geq n-\cfrac{\kappa}{2}$ satisfies equation (\ref{mainineq}) with $s_{\tau}=I(\tau)$ and $r_{\tau}=n-I(\tau)$,
then we have:

\begin{eqnarray}
&L^{*}(m-\frac{n}{2},\Pi\otimes \eta) \sim_{E(\Pi,\eta);F^{\Gal}}& \\ \nonumber
 &(2 \pi i)^{(m-n/2)nd(\tr)} \Icm^{[n/2]} (D_{\tr}^{1/2})^{n}e_{\Phi}^{mn}P^{*,(I(\Pi,\eta))}(\Pi) \prod\limits_{\tau\in\Phi}p^{*}(\check{\eta},\tau)^{I(\tau)}p^{*}(\check{\eta},\overline{\tau})^{n-I(\tau)}&
\end{eqnarray}
where $d(\tr)$ is the degree of $\tr$ over $\Q$ and $p(\check{\eta},\tau)$ refers to the CM period.
\end{thm2}

At the end of the paper, we show that the above theorem is compatible with the Deligne conjecture for critical values of motives (c.f. \cite{deligne79}).


\subsection*{Acknowledgements} The authors would like to thank Michael Harris for his numerous suggestions and comments. We also want to thank Harald Grobner, Fabian Januszewski, Li Ma and Alberto Minguez for several useful conversations. 

\subsection*{Notation and conventions}
We fix an algebraic closure $\C$ of $\R$, a choice of $i=\sqrt{-1}$, and we let $\overline\Q$ denote the algebraic closure of $\Q$ in $\C$. We let $c\in\Gal(\C/\R)$ denote complex conjugation on $\C$, and we use the same letter to denote its restriction to $\overline\Q$. Sometimes we also write $c(z)=\overline z$ for $z\in\C$. We let $\Gamma_{\Q}=\Gal(\overline\Q/\Q)$.
For a number field $K$, we let $\A_{K}$ and $\A_{K,f}$ denote the rings of ad\`eles and finite ad\`eles of $K$ respectively. When $K=\Q$, we write $\A=\A_{\Q}$ and $\A_{f}=\A_{\Q,f}$.

A CM field $L$ is a totally imaginary quadratic extension of a totally real field $K$. A CM type $\Phi$ for $L/K$ is a choice of one of the two possible extensions to $L$ of each embedding of $K$.

All vector spaces will be finite-dimensional except otherwise stated. By a variety over a field $K$ we will mean a geometrically reduced scheme of finite type over $K$.
%
%
We let $\mathbb{S}=R_{\C/\R}\Gm{\C}$. We denote by $c$ the complex conjugation map on $\mathbb{S}$, so for any $\R$-algebra $A$, this is $c\otimes_{\R}1_A:(\C\otimes_{\R}A)^\times\to(\C\otimes_{\R}A)^\times$. We usually also denote it by $z\mapsto\overline z$, and on complex points it should not be confused with the other complex conjugation on $\mathbb{S}(\C)=(\C\otimes_{\R}\C)^{\times}$ on the second factor.

A tensor product without a subscript between $\Q$-vector spaces will always mean tensor product over $\Q$. For any number field $K$, we denote by $J_{K}=\Hom(K,\C)$. For $\sigma\in J_{K}$, we let $\overline\sigma=c\sigma$. 

Let $E$ be a number field and $L$ be a subfield of $\C$. If $z,w\in E\otimes \C$, we write $z\sim_{E; L}w$ if either $w=0$ or if $w\in(E\otimes \C)^{\times}$ and $z/w\in(E\otimes L)^{\times}$. When $L=\Q$, we simply write $a\sim_{E}b$. There is a natural isomorphism
$E\otimes\C\simeq\prod_{\varphi\in J_E}\C$ given by $e\otimes z\mapsto(\varphi(e)z)_{\varphi}$ for $e\in E$ and $z\in\C$. Under this identification, we denote an element $z\in E\otimes\C$ by $(z_{\varphi})_{\varphi\in J_E}$.


%
%
%
%
We choose Haar measures on local and adelic points of unitary groups as in the Introduction of \cite{harriscrelle}.

\section{Automorphic representations}\label{section: automorphic}
In this section we recall some basic facts about cohomological representation of a unitary group and their conjugation by $\Aut(\C)$.

\subsection{Unitary groups, Shimura varieties and conjugation}

Let $V$ be a hermitian space of dimension $n$ over $\cm$ with respect to $\cm/\tr$. We let $U$ be the (restriction of scalars from $\tr$ to $\Q$ of the) unitary group associated to $V$, and we let $G$ be the associated similitude unitary group with rational similitude factors. To be more precise, $U$ and $G$ are reductive algebraic groups over $\Q$, such that for any $\Q$-algebra $A$, the $A$-points are given as
\[U(A)=\{g\in\Aut_{\cm\otimes A}(V\otimes A) : gg^{*}=\operatorname{Id}\}
\]
and
\[G(A)=\{g\in\Aut_{\cm\otimes A}(V\otimes A) : gg^{*}=\mu(g)\operatorname{Id} \text{ with } \mu(g)\in A^{\times}\},
\]
where we write $g^{*}$ for the adjoint of $g$ with respect to the hermitian form.

We fix once and for all a CM type $\Phi$ for $\cm/\tr$. Attached to $G$ and $\Phi$ is a Shimura variety which we denote by $S=\Sh(G,X)$. The choice of $\Phi$ and an orthogonal basis of $V$ determine a choice of CM point $x\in X$, which will be fixed throughout the paper. We let $K_{x}\subset G_{\R}$ be the centralizer of $x$. For each compact open subgroup $K\subset G(\A_{f})$, $S_{K}$ will be the corresponding Shimura variety at level $K$. We also let $E(G,X)$ be the reflex field of $S$. For each $\tau\in J_{\cm}$, we let $(r_{\tau},s_{\tau})=(r_{\tau}(V),s_{\tau}(V))$ be the signature of $V$ at the place $\tau$. We can write the group $G_{\R}$ as
\begin{equation}\label{decomp GR} G_{\R}\cong G\left(\prod_{\tau\in\Phi}\operatorname{GU}(r_{\tau},s_{\tau})\right),\end{equation}
which is defined to be the set of tuples $(g_{\tau})_{\tau\in\Phi}$ that have the same similitude factor.

We will parametrize irreducible representations of $G_{\C}$ and of $K_{x,\C}$ by their highest weights, and we will use the conventions used in \cite{guerbperiods}, 3.3. Thus, an irreducible representation of $G_{\C}$ (resp. $K_{x,\C}$) will be given by a highest weight $\mu\in\Lambda^{+}$ (resp. $\lambda\in\Lambda_{c}^{+}$). The corresponding representations will be denoted by $W_{\mu}$ (resp. $V_{\lambda}$). All these parameters can be written as tuples
\[ \left((a_{\tau,1},\dots,a_{\tau,n})_{\tau\in\Phi};a_{0}\right) \]
where each $a_{\tau,i}$ and $a_{0}$ are integers, and $a_{\tau,1}\geq\dots\geq a_{\tau,n}$ for each $\tau\in\Phi$ in the case of irreducible representations of $G_{\C}$. In the case of representations of $K_{x,\C}$, the condition is that $a_{\tau,1}\geq\dots\geq a_{\tau,r_{\tau}}$ and $a_{\tau,r_{\tau}+1}\geq\dots\geq a_{\tau,n}$ for every $\tau\in\Phi$.

Let $\sigma\in\Aut(\C)$. We let $({}^{\sigma}G,{}^{\sigma}X)$ be the conjugate Shimura datum with respect to the automorphism $\sigma$ and the CM point $x\in X$ (see \cite{milne}, II.4, for details). It follow from \cite{milnesuh}, Theorem 1.3, that the group ${}^{\sigma}G$ can be realized as the unitary group attached to another $n$-dimensional hermitian space ${}^{\sigma}V$, whose signatures at infinity are obtained by permutation from those of $G$. More precisely,
\[ (r_{\tau}({}^{\sigma}V),s_{\tau}({}^{\sigma}V))=(r_{\sigma\tau}(V),s_{\sigma\tau}(V)) \]
for any $\tau\in J_{\cm}$. The local invariants of ${}^{\sigma}V$ at finite places are the same as those of $V$, and we identify ${}^{\sigma}G(\A_{f})$ with $G(\A_{f})$ without further mention.

 We can also conjugate automorphic vector bundles, as in \cite{milne}. The CM point $x\in X$ will be fixed throughout, and all conjugations will be with respect to this fixed point. For any $\sigma\in\Aut(\C)$, we have a CM point ${}^{\sigma}x\in{}^{\sigma}X$, and we let ${}^{\sigma}\Lambda^{+}$ and ${}^{\sigma}\Lambda_{c}^{+}$ denote the corresponding set of dominant weights for the groups ${}^{\sigma}G$ and $K_{{}^{\sigma}x}\subset{}^{\sigma}G_{\R}$. When $x$ needs to be specified, we will denote $\Lambda_{c}^{+}$ by $\Lambda_{c,x}^{+}$. Suppose that $\mathcal{E}_{\lambda}$ is a fully decomposed automorphic vector bundle over $S_{\C}$, associated with the irreducible representation of $K_{x,\C}$ with highest weight $\lambda$. Then $\mathcal{E}_{\lambda}\times_{\C,\sigma}\C$ is a vector bundle over $S_{\C}\times_{\C,\sigma}\C$, and identifying the latter with ${}^{\sigma}S_{\C}$, we get an automorphic vector bundle ${}^{\sigma}\mathcal{E}_{\lambda}$ over ${}^{\sigma}S_{\C}$. It is fully decomposed, associated with an irreducible representation of $K_{{}^{\sigma}x,\C}$ whose highest weight we denote by ${}^{\sigma}\lambda\in{}^{\sigma}\Lambda_{c}^{+}$.

\subsection{Conjugation of cohomological cuspidal representations}\label{subsection: conjugation}
From now on, we let $\pi=\pi_{\infty}\otimes\pi_{f}$ be an automorphic representation of $G(\A)$. We will assume that $\pi$ satisfies the following list of hypotheses.

\begin{hyp}\label{hypotheses}
\begin{enumerate}[(1)]
\item\label{discreteness} $\pi$ occurs in the discrete spectrum.
\item\label{cohomological} $\pi$ is cohomological with respect to some irreducible representation $W=W_{\mu}$ of $G_{\C}$, with $\mu\in\Lambda^{+}$.
\item \label{defoverQ} The representation $W$ is defined over $\Q$.
\item\label{temperedness} $\pi_{\infty}$ is essentially tempered.
\end{enumerate}
\end{hyp}

\begin{rem} By Theroem 4.3 of \cite{Wallach}, Hypotheses \ref{hypotheses} (\ref{discreteness}) and (\ref{temperedness}) imply that $\pi$ is cuspidal. This can also be deduced by assuming that $\pi_{v}$ is tempered at some finite place, as in Proposition 4.10 of \cite{clozel93}.
\end{rem}

\begin{rem} Hypothesis (\ref{defoverQ}) is assumed mostly for simplicity of notation.
\end{rem}

\begin{rem} If $\mu$ is regular, in the sense that $a_{\tau,i}>a_{\tau,i+1}$ for every $\tau\in\Phi$ and every $i=1,\dots,n-1$, then \ref{hypotheses} (\ref{discreteness}) and (\ref{cohomological}) imply (\ref{temperedness}) (see Prop. 4.2 and 5.2 of \cite{lischwermer} and Prop. 2.2 of \cite{schwermer}).
\end{rem}

Under these hypotheses, $\pi_{\infty}$ is a discrete series representation that belongs to the $L$-packet whose infinitesimal character is that of $W^{\vee}$.

By Theorem 4.4.1 of \cite{bhr}, the field of definition $\Q(\pi_{f})$ of the isomorphism class of $\pi_{f}$ is a subfield of a CM field. There is a finite extension $E_{0}(\pi)$ of $\Q(\pi_{f})$, which can also be taken to be a CM field, such that $\pi_{f}$ has a model $\pi_{f,0}$ over $E_{0}(\pi)$ (see Corollary 2.13 of \cite{harrisbeilinson}). We let ${}^{\sigma}\pi_{f}=\pi_{f}\otimes_{\C,\sigma}\C\cong\pi_{f,0}\otimes_{E_{0}(\pi),\sigma}\C$. We can also define the conjugate ${}^{\sigma}\pi_{\infty}$, a discrete series representation of ${}^{\sigma}G(\R)$, as in (2.19) of \cite{harrisbeilinson} (see also \cite{bhr}, 4.2). We remark that the restriction of ${}^{\sigma}\pi_{\infty}$ at $U({}^{\sigma}V)_{\tau}$ is isomorphic to the restriction of $\pi_{\infty}$ at $U(V)_{\sigma\tau}$ for any $\tau\in J_{F}$.

We will make the following assumption throughout the paper:
\begin{hyp}
There exists an automorphic representation ${}^{\sigma}\pi$ of ${}^{\sigma}G(\A)$ satisfying Hypotheses \ref{hypotheses} such that $({}^{\sigma}\pi)_{f}\cong{}^{\sigma}\pi_{f}$ (recall that we are identifying ${}^{\sigma}G(\A_{f})\cong G(\A_{f})$).
\end{hyp}


\begin{rem}
One of the main results of \cite{bhr} (Theorem 4.2.3) guarantees the existence of such ${}^{\sigma}\pi$ when the Harish-Chandra parameter of $\pi_{\infty}$ is far enough from the walls. Moreover, under these conditions, we have that $({}^{\sigma}\pi)_{\infty}\cong{}^{\sigma}\pi_{\infty}$. We will assume this throughout the paper. In particular, $({}^{\sigma}\pi)_{\infty}$ is cohomological of weight
\[ {}^{\sigma}\tilde\mu=\left(a_{\sigma\tau,1},\dots,a_{\sigma\tau,n}\right)_{\tau\in J_{\cm}} \]
In \cite{harrisbeilinson}, 4.3, further conditions under which ${}^{\sigma}\pi$ is shown to exist are discussed. A particular case of this is when the infinitesimal character of $\pi_{\infty}$ is regular and $\pi$ is not a CAP representation. This last condition is expected to be true for tempered representations. See also Corollary 2.14 of \cite{harrisbeilinson}.

\end{rem}

\subsection{The standard $L$-function and the motivic normalization}
Let $\pi$ be as above. As in \cite{harriscrelle}, 2.7, we can define the standard $L$-function of $\pi$ as $L^{S}(s,\pi,\St)=L^{S}(s,BC(\pi_{0}),\St)$. Here $\St$ refers to the standard representation of the $L$-group of $\GL_{n}$ over $L$, $\pi_{0}$ is an irreducible constituent of the restriction of $\pi$ to $U(\A)$, and $BC(\pi_{0})$ is the base change of $\pi_{0}$ to an irreducible admissible representation of $\GL_{n}(\A_{L}^{S})$, for a big enough finite set of places $S$ of $L$. The base change is defined locally at archimedean places, at split places, and at places of $K$ where the local unitary group $U_{v}$ and $\pi_{0,v}$ are unramified. Under our assumptions, it is known that $BC(\pi_{0})$ is the restriction to $\GL_{n}(\A_{L}^{S})$ of an automorphic representation $\Pi$ of $\GL_{n}(\A_{L})$, so we can actually define $L(s,\pi,\St)$ at all places as $L(s,\Pi,\St)$. We define the motivic normalization by
\[ L^{\mot,S}(s,\pi,\St)=L^{S}\left(s-\frac{n-1}{2},\pi,\St\right).\]
More generally, if $\alpha$ is an algebraic Hecke character of $\cm$, we define
\[ L^{S}(s,\pi,\St,\alpha)=L(s,BC(\pi_{0}),\St,\alpha),\]
the twisted $L$-function. The motivic normalization is defined similarly. We define
\[ L^{*,\mot,S}(s,\pi,\St,\alpha)=\left(L^{\mot,S}(s,{}^{\sigma}\pi,\St,{}^{\sigma}\alpha)\right)_{\sigma\in\Aut(\C)}.\]

\subsection{Algebraic Hecke characters}\label{subsection: algebraic Hecke} Let $\psi$ be an algebraic Hecke character of $\cm$, of infinity type $(m_{\tau})_{\tau\in J_{\cm}}$. Recall that this means that
\[ \psi:\A_{\cm}^{\times}/\cm^{\times}\to\C^{\times} \]
is continuous, and for each embedding $\tau\in J_{\cm}$, we have
\[ \psi(x)=\tau(x)^{-m_{\tau}}\bar\tau(x)^{-m_{\bar\tau}}\quad(x\in \cm_{w}^{\times}).\]
Here $w$ is the infinite place of $\cm$ determined by $\tau$. We let $\Q(\psi)$ be the field generated over $\Q$ by the values of $\psi$ on $\A_{\cm,f}^{\times}$. Then $\Q(\psi)$ is either $\Q$ or a CM field. If $\sigma\in\Aut(\C)$, we define ${}^{\sigma}\psi$ to be the algebraic Hecke character whose values on $\A_{\cm,f}^{\times}$ are obtained from those of $\psi$ by applying $\sigma$, and whose infinity type is $(m_{\sigma^{-1}\tau})_{\tau\in J_{\cm}}$.

We need to fix the following notation. Suppose that $\alpha_{0}$ is an algebraic Hecke character of $\tr$ of finite order, and $\sigma\in J_{\tr}$. Then
\[ \delta_{\sigma}[\alpha_{0}]\in(\Q(\alpha_{0})\otimes\C)^{\times} \]
is the $\delta$-period of the Artin motive $[\alpha_{0}]$. This is a motive over $F^{+}$ with coefficients in $\Q(\alpha_{0})$. We also let
\[ \delta[\alpha_{0}]=\delta_{1}\left(\Res_{\tr/\Q}[\alpha_{0}]\right)\in(\Q(\alpha_{0})\otimes\C)^{\times} \]
be the period of the motive $\Res_{\tr/\Q}[\alpha_{0}]$ obtained from $[\alpha_{0}]$ by restriction of scalars from $\tr$ to $\Q$. It is proved in \cite{yoshida} that
\[ \delta[\alpha_{0}]\sim_{\Q(\alpha_{0})}D_{\tr}^{\frac{1}{2}}\prod_{\sigma\in J_{\tr}}\delta_{\sigma}[\alpha_{0}].\]
Suppose now that $\alpha$ is an algebraic Hecke character of $\cm$ of weight $w$. Then we can write
\[ \alpha|_{\A_{\tr}^{\times}}=\alpha_{0}\|\cdot\|_{\A_{\tr}}^{-w},\]
where $\alpha_{0}$ is a finite order algebraic Hecke character of $\tr$. We define
\[ G(\alpha)=\prod_{\sigma\in J_{\tr}}\delta_{\sigma}[\alpha_{0}]\in(\Q(\alpha_{0})\otimes\C)^{\times}.\]
We can then write
\begin{equation}\label{delta0 G}
\delta[\alpha_{0}]\sim_{\Q(\alpha_{0})}D_{\tr}^{\frac{1}{2}}G(\alpha).
\end{equation}
For each embedding $\rho\in J_{\Q(\alpha_{0})}$, we let $G(\alpha)_{\rho}\in\C^{\times}$ be its $\rho$-component.


\section{The doubling method, conjugation and the main theorem}\label{section: main}
\subsection{Basic assumptions} In this section, we briefly recall the doubling method used to obtain the mail formula of \cite{guerbperiods}, and explain how it behaves under Galois conjugation. We fix once and for all a cuspidal automorphic representation $\pi$ of $G(\A)$, satisfying all the previous hypotheses. In particular, $\pi$ is cohomological of type $\mu=\left((a_{\tau,1},\dots,a_{\tau,n})_{\tau\in\Phi};a_{0}\right)$, with $W=W_{\mu}$ defined over $\Q$. We also assume that $\pi^{\vee}\cong\pi\otimes\|\nu\|^{2a_{0}}$, that $\pi$ contributes to antiholomorphic cohomology, and that
\begin{equation}\label{hypomult} \dim_{\C}\Hom_{\C[G(\A_{f})]}\left({}^{\sigma}\pi_{f},H^{d}_{!}({}^{\sigma}S_{\C},{}^{\sigma}\mathcal{E}_{\mu})\right) \leq1 \end{equation}
for all $\sigma\in\Aut(\C)$. This is part of Arthur's multiplicity conjectures for unitary groups, a proof of which is expected to appear in the near future. We refer the reader to \cite{kmsw} and their forthcoming sequels for more details.

We fix a CM type $\Phi$ for $\cm/\tr$, and an algebraic Hecke character $\psi$ of $\cm$ with infinity type $(m_{\tau})_{\tau\in J_{\cm}}$.
We let $\Lambda=\Lambda(\mu;\psi)\in\Lambda_{c}^{+}$ be the parameter
\[ \Lambda=\left((b_{\tau,1},\dots,b_{\tau,n})_{\tau\in\Phi};b_{0}\right),\]
where
\[ b_{\tau,i}=\left\{\begin{array}{ll} a_{\tau,s_{\tau}+i}+m_{\bar\tau}-m_{\tau}-s_{\tau} & \hbox{ if } 1\leq i\leq r_{\tau},
\\ a_{\tau,i-r_{\tau}}+m_{\bar\tau}-m_{\tau}+r_{\tau} & \hbox{ if } r_{\tau}+1\leq i\leq n,\end{array}\right. \]
and $b_{0}=a_{0}-n\sum_{\tau\in\Phi}m_{\bar\tau}$ (this was denoted by $\Lambda(\mu;\eta^{-1})$ in \cite{guerbperiods}). We similarly define
${}^{\sigma}\Lambda=\Lambda({}^{\sigma}\mu;{}^{\sigma}\psi)$ for $\sigma\in\Aut(\C)$.

\subsection{The double hermitian space} Given our hermitian space $V$, we let $-V$ be the hermitian space whose underlying $\cm$-vector space is $V$, but whose hermitian form is multiplied by $-1$. Its associated Shimura conjugacy class will be denoted by $X^{-}$. We let $2V=V\oplus-V$, and $(G^{(2)},X^{(2)})$ be the Shimura datum attached to $2V$. The choice of our CM point $x\in X$ gives rise to fixed CM points $x^{-}=\bar x\in X^{-}$ and $x^{(2)}\in X^{(2)}$. The reflex field of $(G^{(2)},X^{(2)})$ is $\Q$, and hence we can identify $({}^{\sigma}G^{(2)},{}^{\sigma}X^{(2)})=(G^{(2)},X^{(2)})$ for any $\sigma\in\Aut(\C)$. We let $S^{(2)}$ be the associated Shimura variety.

We also let $G^{\sharp}\subset G\times G$ be the subgroup of pairs with the same similitude factor, and we let $x^{\sharp}:\mathbb{S}\to G^{\sharp}_{\R}$ be the map $(x,x^{-})$. The corresponding Shimura datum will be denoted by $(G^{\sharp},X^{\sharp})$, and the Shimura variety by $S^{\sharp}$. There is a natural embedding
\[ i:(G^{\sharp},X^{\sharp})\to(G^{(2)},X^{(2)}) \]
of Shimura data, which induces a closed embedding of Shimura varieties $i:S^{\sharp}\to S^{(2)}$.

For $\sigma\in\Aut(\C)$, we let $({}^{\sigma}G)^{\sharp}\subset{}^{\sigma}G\times{}^{\sigma}G$ be the group defined in a similar fashion but using ${}^{\sigma}V$ and ${}^{\sigma}G$ instead of $V$ and $G$. Using the definition of the twisting, given for example in \cite{milne}, it is easy to see that we can naturally identify $({}^{\sigma}G)^{\sharp}$ with ${}^{\sigma}(G^{\sharp})$ as a subgroup of ${}^{\sigma}G\times{}^{\sigma}G$. We let ${}^{\sigma}i:{}^{\sigma}G^{\sharp}\hookrightarrow G^{(2)}$ be the inclusion defined above for ${}^{\sigma}V$.

Keep in mind that our CM point $x\in X$ is fixed, and this in turn gives choices of CM points ${}^{\sigma}x\in{}^{\sigma}X$ for each $\sigma$. We fix these CM points, as well as their variant $x^{(2)}$, $x^{\sharp}$, ${}^{\sigma}x^{\sharp}$ for varying $\sigma\in\Aut(\C)$. We will parametrize fully decomposed automorphic vector bundles over the corresponding Shimura varieties by irreducible representations of the corresponding groups $K_{x^{(2)},\C}$, $K_{x^{\sharp},\C}$, $K_{{}^{\sigma}x,\C}$. We have the following identifications:
\[ K_{x^{\sharp},\C}\cong\left(\prod_{\tau\in\Phi}GL_{r_{\tau},\C}\times\GL_{s_{\tau},\C}\times\GL_{s_{\tau},\C}\times\GL_{r_{\tau},\C}\right)\times\Gm{\C},\]
\[ K_{{}^{\sigma}x^{\sharp},\C}\cong\left(\prod_{\tau\in\Phi}GL_{r_{\sigma\tau},\C}\times\GL_{s_{\sigma\tau},\C}\times\GL_{s_{\sigma\tau},\C}\times\GL_{r_{\sigma\tau},\C}\right)\times\Gm{\C}, \]
\[ K_{x^{(2)},\C}\cong\left(\prod_{\tau\in\Phi}\GL_{n,\C}\times\GL_{n,\C}\right)\times\Gm{\C}.\]

For
\[ \lambda=\left((\lambda_{\tau,1},\dots,\lambda_{\tau,n})_{\tau\in\Phi};\lambda_{0}\right)\in\Lambda_{c,x}^{+} \]
and
\[ \lambda^{-}=\left((\lambda^{-}_{\tau,1},\dots,\lambda^{-}_{\tau,n})_{\tau\in\Phi};\lambda^{-}_{0}\right)\in\Lambda_{c,x^{-}}^{+},\]
we let
\[ (\lambda,\lambda^{-})^{\sharp}=\left((\lambda_{\tau,1},\dots,\lambda_{\tau,n},\lambda^{-}_{\tau,1},\dots,\lambda^{-}_{\tau,n})_{\tau\in\Phi};\lambda_{0}+\lambda^{-}_{0}\right)\in\Lambda_{c,x^{\sharp}}^{+}.\]
Let
\[ \lambda^{*}=\left((-\lambda_{\tau,n},\dots,-\lambda_{\tau,1})_{\tau\in\Phi};-\lambda_{0}\right)\in\Lambda_{c,x^{-}}^{+}. \]
For an integers $\kappa$, we define $\lambda^{\sharp}[\kappa]\in\Lambda_{c,x^{\sharp}}^{+}$ as $\lambda^{\sharp}[\kappa]=(\lambda,\lambda^{*}\otimes\det^{-\kappa})^{\sharp}\otimes\nu^{\kappa}$. Explicitly,
\[ \lambda^{\sharp}[\kappa]=\left((\lambda_{\tau,1},\dots,\lambda_{\tau,n},-\lambda_{\tau,n}-\kappa,\dots,-\lambda_{\tau,1}-\kappa)_{\tau\in\Phi};0\right).\]

For any pair of integers $(m,k)$, we let $\mathcal{E}_{m,\kappa}$ be the fully decomposed automorphic line bundle over $S^{(2)}_{\C}$ corresponding to the one-dimensional irreducible representation of $K_{x^{(2)},\C}$ given by
\[ ((g_{\tau},g_{\tau}')_{\tau\in\Phi};z)\mapsto\prod_{\tau\in\Phi}\det(g_{\tau})^{-m-\kappa}\det(g_{\tau}')^{m}.\]
It is easy to see that this line bundle $\mathcal{E}_{m,\kappa}$ has a canonical model over $\Q$. Its highest weight is parametrized by
\[ \left((-m-\kappa,\dots,-m-\kappa,m,\dots,m)_{\tau\in\Phi};0\right).\]
Recall that $\Lambda$ was defined in the previous subsection. We then obtain an element $\Lambda^{\sharp}[\kappa]\in\Lambda_{c,x^{\sharp}}^{+}$ as above. The corresponding irreducible representation of $K_{x^{\sharp},\C}$ defines an automorphic vector bundle $\mathcal{E}_{\Lambda^{\sharp}[\kappa]}$ over $S^{\sharp}_{\C}$. Its conjugate ${}^{\sigma}\mathcal{E}_{\Lambda^{\sharp}[\kappa]}$, as an automorphic vector bundle over ${}^{\sigma}S_{\C}^{\sharp}$, can be identified with $\mathcal{E}_{{}^{\sigma}\Lambda^{\sharp}[\kappa]}$.

\begin{rem} We correct here a simple misprint of \cite{guerbperiods}, Section 4.5, where an element denoted by $\Lambda^{\sharp}(\ell)$ was used, with $\ell=n\sum_{\tau\in\Phi}m_{\tau}-m_{\bar\tau}$. The correct element to use is $\Lambda^{\sharp}(0)$ (that is, with $\ell=0$). Indeed, the only purpose of $\ell$ was to make sure that the parameter $(\mu+\mu(\eta),\mu^{\vee}-\mu(\eta))^{\sharp}$ equals the Serre dual of $\Lambda^{\sharp}(\ell)$. The computation of these parameters actually shows that the last integer, corresponding to the similitude factor, must be $0$ instead of $\ell$ in both cases, so there is no need to introduce the integer $\ell$, which has no influence on the rest of the proof. Also, note that the $\Lambda^{\sharp}(0)$ of \cite{guerbperiods} is what we call $\Lambda^{\sharp}[0]$ here. In this paper we give a slightly more general version of the results for any integer $\kappa$.
\end{rem}

Let $m\in\Z$ satisfy the inequalities
\begin{equation}\label{mainineq} \frac{n-\kappa}{2}\leq m\leq\min\{-a_{\tau,s_{\tau}+1}+s_{\tau}+m_{\tau}-m_{\bar\tau}-\kappa,a_{\tau,s_{\tau}}+r_{\tau}+m_{\bar\tau}-m_{\tau}\}_{\tau\in\Phi}.\end{equation}
By Proposition 4.2.1 of \cite{guerbperiods}, there exist non-zero differential operators
\begin{equation}\label{diffop} \Delta_{m,\kappa}=\Delta_{m,\kappa}(\Lambda):\mathcal{E}_{m,\kappa}|_{S_{\C}^{\sharp}}\to\mathcal{E}_{\Lambda^{\sharp}[\kappa]},\end{equation}
which are moreover rational over the relevant reflex fields (all of these are contained in $L^{\Gal}$). In {\em op. cit.}, $\kappa$ was taken to be zero, but the proof for any $\kappa$ is completely similar.

If $\sigma\in\Aut(\C)$, then $m$ also satisfies (\ref{mainineq}) for the conjugate Shimura data, and the corresponding differential operator
\[ {}^{\sigma}\Delta_{m,\kappa}=\Delta_{m,\kappa}({}^{\sigma}\Lambda):\mathcal{E}_{m,\kappa}|_{{}^{\sigma}S_{\C}^{\sharp}}\to\mathcal{E}_{{}^{\sigma}\Lambda^{\sharp}[\kappa]} \]
is the conjugate of (\ref{diffop}) under $\sigma$.

\subsection{Petersson norms and CM periods}\label{subsection: petersson}
We recall now the definition of certain CM periods attached to $\psi$ that appear in our critical value formula. The determinant defines a map $\det:G\to T^{\cm}=\Res_{\cm/\Q}\Gm{\cm}$, and thus we have a morphism $\det\circ x:\mathbb{S}\to(T^{\cm})_{\R}$. The pair $(T^{\cm},\det\circ x)$ is a Shimura datum defining a zero dimensional Shimura variety, and the point $\det\circ x$ is a CM point. Recall that $\Q(\psi)$ is the field generated over $\Q$ by the values of $\psi$ on $\A_{\cm,f}^{\times}$. Also, let $E(\mu)\supset E(G,X)$ be the reflex field of the automorphic vector bundle $\mathcal{E}_{\mu}$ over $S$. Define $E(\psi)=E(\mu)E(T^{\cm},\det\circ x)\Q(\psi)$. The infinity type of $\psi$ can be seen as an algebraic character of $T^{\cm}$, and the corresponding automorphic vector bundle $\mathcal{E}_{\psi}$ has a canonical model over $E(\psi)$. Note that $E(T^{\cm},\det\circ x)\supset E(G,X)$.

Attached to the CM point $\det\circ x$ there is a CM period
\[ p(\psi;\det\circ x)\in\C^{\times}, \]
defined in \cite{harriskudla} (see also \cite{harrisunitary}). For every $\sigma\in\Aut(\C)$, the conjugate Shimura datum is canonically identified with $\left(T^{\cm},\det\circ({}^{\sigma}x)\right)$ (this is clear from the definitions), so we can define as well a CM period $p\left({}^{\sigma}\psi;\det\circ({}^{\sigma}x)\right)\in\C^{\times}$. If $\sigma\in\Aut(\C/E(\psi))$, then this coincides with $p(\psi;\det\circ x)$, and this allows us to define $p\left({}^{\rho}\psi;\det\circ({}^{\rho}x)\right)$ for any $\rho\in J_{E(\psi)}$ by extending $\rho$ to an element of $\Aut(\C)$. We let
\[ p^{*}(\psi;\det\circ x)=\left(p\left({}^{\rho}\psi;\det\circ({}^{\rho}x)\right)\right)_{\rho\in J_{E(\psi)}},\]
viewed as an element of $(E(\psi)\otimes\C)^{\times}$. We also define
\[ P(\psi)=P(\psi;x)=p(\psi;\det\circ x)p(\psi^{-1};\det\circ\bar x) \]
and
\[ P^{*}(\psi)=P^{*}(\psi;x)=p^{*}(\psi;\det\circ x)p^{*}(\psi^{-1};\det\circ\bar x).\]
Note that this depends on the choice of the CM point $x$, but we will ignore $x$ for simplicity of notation. If $\alpha$ is another algebraic Hecke character of $\cm$, we let
\[ P^{*}(\psi;\alpha)=P^{*}(\psi;\alpha;x)=p^{*}(\psi;\det\circ x)p^{*}(\psi^{-1}\alpha^{-1};\det\circ\bar x)\in E(\psi,\alpha)\otimes\C,\]
where $E(\psi,\alpha)=E(\psi)\Q(\alpha)$.

As in \cite{guerbperiods}, 3.10, we let $s_{\psi}$ be an automorphic form that contributes to $H^{0}_{!}(S(\det\circ x)_{\C},\mathcal{E}_{\psi})$, which is rational over $E(\psi)$. Similarly, we let $f_{0}$ be an automorphic form in $\pi$, contributing to $H^{d}_{!}(S_{\C},\mathcal{E}_{\mu})$, rational over $E(\mu)$. We can then form an automorphic form $f=f_{0}\otimes s_{\psi}$ on $\pi\otimes\psi$, and a corresponding non-zero $G(\A_{f})$-equivariant map $\gamma:\pi_{f,0}\otimes E(\psi)\to H^{d}_{!}(S_{E(\psi)},\mathcal{E})$, where $\mathcal{E}$ is the automorphic vector bundle over $S_{\C}$ obtained by pulling back $\mathcal{E}_{\psi}$ and taking the tensor product with $\mathcal{E}_{\mu}$. Concretely, $\mathcal{E}$ is attached to the irreducible representation of $K_{x,\C}$ whose highest weight is $\mu+\mu(\psi)$, where
\[ \mu(\psi)=\left((m_{\tau}-m_{\bar\tau},\dots,m_{\tau}-m_{\bar\tau})_{\tau\in\Phi};n\sum_{\tau\in\Phi}m_{\bar\tau}\right).\]
(see \cite{guerbperiods}, 4.5).

We now let $\alpha$ be another algebraic Hecke character of $\cm$, whose infinity type is given at each place $\tau\in\Phi$ by an integer $-\kappa$ (the same for all $\tau\in\Phi$), and at each place $\tau\not\in\Phi$ by $0$. As similar construction as above, using $\pi^{\vee}\otimes\alpha^{-1}$ and $\psi^{-1}$ instead of $\pi$ and $\psi$, gives rise to elements $s_{\psi^{-1}}$, $f_{0}'$ and $f'$, which in turn are associated with a map $\gamma'$ to coherent cohomology in degree $d$ of the conjugate Shimura variety ${}^{c}S_{\C}$. See \cite{guerbperiods}, 4.5, for details. The maps $\gamma$ and $\gamma'$ define via cup product and pullback to $S_{\C}^{\sharp}\hookrightarrow S_{\C}\times{}^{c}S_{\C}$, an element $(\gamma,\gamma')^{\sharp}$ that contributes to
\[ H^{2d}_{!}(S_{\C}^{\sharp},\mathcal{E}_{(\mu+\mu(\psi),\mu^{\vee}-\mu(\psi)-\mu(\alpha))^{\sharp}}).\]
Note that $\mathcal{E}_{(\mu+\mu(\psi),\mu^{\vee}-\mu(\psi)-\mu(\alpha))^{\sharp}}$ is isomorphic to the Serre dual $\mathcal{E}_{\Lambda^{\sharp}[\kappa]}'$ of $\mathcal{E}_{\Lambda^{\sharp}[\kappa]}$.

For $\sigma\in\Aut(\C)$, we can conjugate $s_{\psi}$, $s_{\psi^{-1}}$, $f_{0}$ and $f_{0}'$ (and hence $f$ and $f'$) to obtain automorphic forms ${}^{\sigma}f\in{}^{\sigma}\pi\otimes{}^{\sigma}\psi$ and ${}^{\sigma}f'\in{}^{\sigma}\pi^{\vee}\otimes{}^{\sigma}\alpha^{-1}\otimes{}^{\sigma}\psi^{-1}$. These are also associated with ${}^{\sigma}G(\A_{f})$-equivariant maps ${}^{\sigma}\gamma$ and ${}^{\sigma}\gamma'$, and the same procedure as above gives rise to an element
\[ {}^{\sigma}(\gamma,\gamma')^{\sharp}=({}^{\sigma}\gamma,{}^{\sigma}\gamma')^{\sharp} \]
that contributes to
\[ H^{2d}_{!}({}^{\sigma}S_{\C}^{\sharp},\mathcal{E}_{{}^{\sigma}\Lambda^{\sharp}[\kappa]}'). \]

We define
\[ Q^{\Pet}(f_{0})=\int_{Z(\A)G(\Q)G(\A)}f_{0}(g)\bar{f_{0}}(g)\|\nu(g)\|^{2a_{0}}dg,\]
and we define $Q^{\Pet}({}^{\sigma}f_{0})$ for $\sigma\in\Aut(\C)$ in a similar way. We let $E(\pi)=E(\mu)E_{0}(\pi)$, and $E(\pi,\psi)=E(\pi)E(\psi)$. By (\ref{hypomult}), we can define $Q^{\Pet}(\pi)=Q^{\Pet}(f_{0})$ uniquely up to multiples by $E(\pi)$. If $\sigma$ fixes $E(\pi)$ (in particular, if $\sigma$ fixes $E(\pi,\psi)$), then $Q^{\Pet}({}^{\sigma}f_{0})=Q^{\Pet}(f_{0})$, and hence we can define
\[ Q^{\Pet,*}(\pi)=\left(Q^{\Pet}({}^{\rho}\pi)\right)_{\rho\in J_{E(\pi,\psi)}}\in E(\pi,\psi)\otimes\C.\]

We also let
\[ (f,f')=\int_{Z(\A)G(\Q)\backslash G(\A)}f(g)f'(g)\alpha\left(\det(g)\right)dg,\]
and get in a similar fashion an element
\[ (f,f')^{*}=\left(({}^{\rho}f,^{\rho}f')_{{}^{\rho}G}\right)_{\rho\in J_{E(\pi,\psi,\alpha)}}\in E(\pi,\psi,\alpha)\otimes\C,\]
where $E(\pi,\psi,\alpha)=E(\pi,\psi)\Q(\alpha)$.

\begin{lemma}\label{lemma petersson} Keep the notation and assumptions as above. Then
\[ (f,f')^{*}\sim_{E(\pi,\psi,\alpha)\otimes\cm^{\Gal}}(2\pi i)^{2a_{0}}Q^{\Pet,*}(\pi)P^{*}(\psi;\alpha)^{-1}.\]
\begin{proof} This is completely similar to the computations in Section 2.9 of \cite{harriscrelle}.
	\end{proof}
\end{lemma}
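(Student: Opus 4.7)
The proof follows the strategy of \cite{harriscrelle}, Section 2.9, adapted to a general totally real base field $\tr$ and with the inclusion of the auxiliary Hecke character $\alpha$. The plan is to factor the integral defining $(f,f')$ through the determinant $\det:G\to T^{\cm}$, separating the contribution of $\pi$ from that of $\psi$ and $\alpha$; it then suffices to establish the identity at a single embedding and apply Galois conjugation to deduce the starred version.

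I would begin by writing $f=f_{0}\otimes s_{\psi}$ and $f'=f_{0}'\otimes s_{\psi^{-1}}$ and disintegrating the Haar measure on $Z(\A)G(\Q)\backslash G(\A)$ along the determinant map. This expresses $(f,f')$ as a product of (i) a Petersson-type pairing of $f_{0}$ against $f_{0}'$ on $G$, possibly twisted by a character pulled back from $T^{\cm}$ via $\alpha$, and (ii) a zero-dimensional pairing of $s_{\psi}$ against $s_{\psi^{-1}\alpha^{-1}}$ on the Shimura varieties attached to $(T^{\cm},\det\circ x)$ and $(T^{\cm},\det\circ\bar x)$.

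For factor (i), the standing hypothesis $\pi^{\vee}\cong\pi\otimes\|\nu\|^{2a_{0}}$ together with the multiplicity-one assumption (\ref{hypomult}) allows me to identify $f_{0}'$ with the complex conjugate $\bar f_{0}$ twisted by $\|\nu\|^{2a_{0}}$, up to a scalar in $E(\pi,\alpha)$. This conversion converts the integral into the Petersson norm $Q^{\Pet}(f_{0})$ of Subsection~\ref{subsection: petersson}, and simultaneously introduces the transcendental factor $(2\pi i)^{2a_{0}}$, which arises from comparing the algebraic and transcendental trivializations of the Tate twist attached to $\|\nu\|^{2a_{0}}$ at infinity. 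For factor (ii), by the very definition of the CM periods $p(\psi;\det\circ x)$ and $p(\psi^{-1}\alpha^{-1};\det\circ\bar x)$, the arithmetic sections $s_{\psi}$ and $s_{\psi^{-1}\alpha^{-1}}$ differ from the canonical transcendental trivializations at their respective CM points precisely by these CM periods. The torus-level pairing therefore evaluates, up to $E(\psi,\alpha)\otimes\cm^{\Gal}$, to $P(\psi;\alpha)^{-1}$; the $\cm^{\Gal}$-ambiguity comes from the reflex fields $E(T^{\cm},\det\circ x)$ and $E(T^{\cm},\det\circ\bar x)$ being subfields of $\cm^{\Gal}$ rather than of $\Q$.

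Combining (i) and (ii) yields the identity at a single embedding. Applying every $\sigma\in\Aut(\C)$ then gives the $*$-version, using the Galois-equivariance built into all the constructions (${}^{\sigma}f$, ${}^{\sigma}f'$, the conjugate CM points ${}^{\sigma}x,{}^{\sigma}\bar x$, and the conjugate CM periods $p({}^{\sigma}\psi;\det\circ{}^{\sigma}x)$) reviewed in Subsection~\ref{subsection: conjugation} and in the CM-period paragraph of Subsection~\ref{subsection: petersson}. I expect the main technical obstacle to be controlling the $\cm^{\Gal}$-ambiguity in the torus-level pairing uniformly in $\sigma$: even though the sections $s_{\psi}$ and $s_{\psi^{-1}\alpha^{-1}}$ are defined over $E(\psi,\alpha)$, the trivializations at the CM points are only canonical after an auxiliary choice rational over $\cm^{\Gal}$, which is precisely why $\cm^{\Gal}$ (rather than $\Q$) appears in the equivalence.
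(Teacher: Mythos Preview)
Your proposal is correct and follows precisely the approach the paper intends: the paper's own proof consists solely of the sentence ``This is completely similar to the computations in Section 2.9 of \cite{harriscrelle},'' and what you have written is exactly a sketch of those computations adapted to the present setting (general $\tr$, inclusion of $\alpha$). Your identification of the two factors, the origin of $(2\pi i)^{2a_{0}}$ from the Tate twist via $\|\nu\|^{2a_{0}}$, and the source of the $\cm^{\Gal}$-ambiguity in the CM-period trivializations are all on target.
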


\begin{rem} The $L$-function $L^{*,\mot,S}(s,\pi\otimes\psi,\St,\alpha)$ can be seen as valued in $E(\pi,\psi,\alpha)\otimes\C$.
\end{rem}

\subsection{Eisenstein series and zeta integrals}
Let $\alpha$ be an algebraic Hecke character of $\cm$ as above. For $s\in\C$, let $I(s,\alpha)$ be the induced representation
\begin{eqnarray}
 &I(s,\alpha)=&\nonumber \\
& \nonumber \{f:G^{(2)}(\A)\to\C:f(pg)=\delta_{GP,\A}(p,\alpha,s)f(g),\hspace{1mm}g\in G^{(2)}(\A),\hspace{1mm}p\in GP(\A)\},&
 \end{eqnarray}
where $\delta_{GP,\A}(p,\alpha,s)=\alpha\left(\det(A(p))\right)\|N_{L/K}\det A(p)\|_{\A_{K}}^{\frac{n}{2}+s}\|\nu(p)\|_{\A_{K}}^{-\frac{n^{2}}{2}-ns}$. The local inductions $I(s,\alpha)_{v}$ and finite and archimedean inductions $I(s,\alpha)_{f}$ and $I(s,\alpha)_{\infty}$ are defined similarly. A section of $I(s,\alpha)$ is a function $\phi(\cdot,\cdot)$, that to each $s\in\C$ assigns an element $\phi(\cdot,s)\in I(s,\alpha)$, with a certain continuity property. Local sections are defined similarly. For $\operatorname{Re}(s)\gg0$, we can defined the Eisenstein series
\[ E_{\phi,s}(g)=\sum_{\sigma\in GP(\Q)\backslash G^{(2)}(\Q)}\phi(\sigma g,s),\]
which converges absolutely to an automorphic form on $G^{(2)}(\A)$. This extends meromorphically to a function of $s\in\C$.

%

From now on, fix $m>n-\frac{\kappa}{2}$ an integer satisfying (\ref{mainineq}). Let $f\in\pi\otimes\psi$ and $f'\in\pi^{\vee}\otimes\alpha^{-1}\otimes\psi^{-1}$ as above. For any section $\phi$ of $I(s,\alpha)$, we define the modified Piatetski-Shapiro-Rallis zeta integral to be
\[ Z(s,f,f',\phi)=\int_{Z^{\sharp}(\A)G^{\sharp}(\Q)\backslash G^{\sharp}(\A)}E_{\phi,s}(i(g,g'))f(g)f'(g')dgdg',\]
where $Z^{\sharp}$ is the center of $G^{\sharp}$. Suppose moreover that $f$, $f'$ and $\phi$ are factorizable as $\otimes_{v}'f_{v}$, $\otimes'_{v}f'_{v}\otimes\alpha_{v}^{-1}$ and $\prod'_{v}\phi_{v}$. Note that we are taking $f'_{v}\in\pi^{\vee}_{v}\otimes\psi_{v}^{-1}$, with $f'_{v}\otimes\alpha^{-1}_{v}$ the function sending $g$ to $f'_{v}(g)\alpha^{-1}\left(\det(g)\right)$. At almost all places $v$, $\pi_{v}\otimes\psi_{v}$ is unramified and $f_{v}$ and $f'_{v}$ are normalized spherical vectors of $\pi_{v}\otimes\psi_{v}$ and $\pi^{\vee}_{v}\otimes\psi^{-1}_{v}$ respectively, with the local pairing $(f_{v},f'_{v})=1$. Define the local zeta integrals as
\[ Z_{v}(s,f,f',\phi)=\int_{U_{v}}\phi_{v}(i(h_{v},1),s)c_{f,f',v}(h_{v})dh_{v},\]
where $U_{v}$ is the local unitary group at the place $v$ for $V$, and
\[ c_{f,f',v}(h_{v})=(f_{v},f'_{v})^{-1}(\pi_{v}(h_{v})f_{v},f'_{v}) \]
is a normalized matrix coefficient for $\pi_{v}$. We let $S$ be a big enough set of primes of $K$ containing the archimedean primes (in practice we take $S$ to be the set consisting of the archimedean places $S_{\infty}$, the places at which $G$ is not quasi-split and the places $v$ where $\pi_{v}$ is ramified or $f_v$ or $f'_{v}$ is not a standard spherical vector). Write $S=S_{f}\cup S_{\infty}$, and let
\[ Z_{f}(s,f,f',\phi)=\prod_{v\in S_{f}}Z_{v}(s,f,f',\phi) \]
and
\[ Z_{\infty}(s,f,f',\phi)=\prod_{v\in S_{\infty}}Z_{v}(s,f,f',\phi).\]

We can conjugate sections $\phi_{f}$ by an element $\sigma\in\Aut(\C)$ as in the discussion before Lemma 6.2.7 of \cite{harrisunitary}.

\begin{lemma}\label{lemma zetaf} There exists a finite section $\phi_{f}(\cdot,s)\in I(s,\alpha)_{f}$ with \\ $\phi_{f}\left(\cdot,m-\frac{n}{2}\right)$ taking values in $\Q(\alpha)$ such that
\[ Z_{f}\left(m-\frac{n}{2},f,f',\phi_{f}\right)\neq0.\]
Moreover, for any $\sigma\in\Aut(\C)$, we have
\[ \sigma\left(Z_{f}\left(m-\frac{n}{2},f,f',\phi_{f}\right)\right)=Z_{f}\left(m-\frac{n}{2},{}^{\sigma}f,{}^{\sigma}f',{}^{\sigma}\phi_{f}\right).\]
In particular,
\[ Z_{f}\left(m-\frac{n}{2},f,f',\phi_{f}\right)\in E(\pi,\psi,\alpha).\]
\begin{proof} The existence of $\phi_{f}$ with the first property follows as in Lemma 4.5.2 of \cite{guerbperiods} or Lemma 3.5.7 of \cite{harriscrelle} (see also the proof of Theorem 4.3 of \cite{siegelweil}). The description of the action of $\sigma$ follows from Lemma 6.2.7 of \cite{harrisunitary}.
\end{proof}
\end{lemma}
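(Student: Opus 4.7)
The plan is to construct the section $\phi_f$ locally at each finite place with the required $\Q(\alpha)$-rationality at $s = m - n/2$ and the non-vanishing of the associated local zeta integral; the Galois equivariance identity will then be proved place by place, and the inclusion in $E(\pi, \psi, \alpha)$ will be a formal consequence.

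For the construction, I would write $\phi_f = \otimes_{v} \phi_v$ over finite places. Outside $S$ the standard spherical section is canonical, $\Q(\alpha)$-rational at $s = m - n/2$, and contributes the unramified local $L$-factor. At each $v \in S_f$ the argument is the one used in Lemma 4.5.2 of \cite{guerbperiods}, modeled on Lemma 3.5.7 of \cite{harriscrelle} and the proof of Theorem 4.3 of \cite{siegelweil}. The space of finite sections of $I(s, \alpha)_v$ at $s = m - n/2$ carries a natural $\Q(\alpha)$-structure whose rational vectors are dense. The linear functional $\phi_v \mapsto Z_v(m - n/2, f_v, f'_v, \phi_v)$ on this space is non-zero: by choosing $\phi_v$ supported on a small open neighborhood where the matrix coefficient $c_{f, f', v}$ does not vanish, the integral $\int_{U_v} \phi_v(i(h, 1), m - n/2) c_{f, f', v}(h) \, dh$ can be made non-zero. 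Combining density with this non-vanishing produces a $\Q(\alpha)$-rational $\phi_v$ with $Z_v \neq 0$.

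For Galois equivariance, the zeta integral factorizes as $Z_f = \prod_{v \in S_f} Z_v$, and each local $Z_v$ is an integral of a compactly supported, locally constant scalar-valued function on $U_v$, so $\sigma \in \Aut(\C)$ can be brought inside. The effect on $\phi_v$ is governed by Lemma 6.2.7 of \cite{harrisunitary}: $\sigma$ sends $\phi_v$ to the section ${}^\sigma \phi_v \in I(s, {}^\sigma \alpha)_v$ with $\sigma(\phi_v(g, s)) = ({}^\sigma \phi_v)(g, s)$, using the identification ${}^\sigma G^{(2)}(\A_f) = G^{(2)}(\A_f)$ (legitimate because the reflex field of $S^{(2)}$ is $\Q$). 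The effect on the matrix coefficient is $c_{f, f', v} \mapsto c_{{}^\sigma f, {}^\sigma f', v}$, which follows from $\sigma$-equivariance of the local duality pairing compatible with the normalization $(f_v, f'_v) = 1$. Multiplying over $v \in S_f$ yields
\[ \sigma\!\left(Z_f\!\left(m - \tfrac{n}{2}, f, f', \phi_f\right)\right) = Z_f\!\left(m - \tfrac{n}{2}, {}^\sigma f, {}^\sigma f', {}^\sigma \phi_f\right). \]
For $\sigma \in \Aut(\C / E(\pi, \psi, \alpha))$ the right-hand side coincides with the left-hand side, since $\sigma$ fixes the isomorphism classes of $\pi_f$, $\psi_f$ and $\alpha_f$; this forces $Z_f(m - n/2, f, f', \phi_f) \in E(\pi, \psi, \alpha)$.

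The step I expect to be the main obstacle is arranging, at the ramified places $v \in S_f$, that $\Q(\alpha)$-rationality and non-vanishing of $Z_v$ hold simultaneously. This requires a $\Q(\alpha)$-integral structure on $I(m - n/2, \alpha)_v$ whose rational sections are both dense and compatible with the Galois action of Lemma 6.2.7 of \cite{harrisunitary}. Once that local coherence is in place, the remaining steps -- bringing $\sigma$ past the compactly supported integral, tracking the Galois action on matrix coefficients through the pairing, and passing to the global statement via the factorization $Z_f = \prod_{v \in S_f} Z_v$ -- are formal bookkeeping.
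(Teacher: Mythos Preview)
Your proposal is correct and follows essentially the same approach as the paper: the existence of the $\Q(\alpha)$-rational section with non-vanishing local zeta integrals is obtained exactly as in Lemma 4.5.2 of \cite{guerbperiods} and Lemma 3.5.7 of \cite{harriscrelle}, and the Galois equivariance is deduced from Lemma 6.2.7 of \cite{harrisunitary}. You have simply expanded in more detail what the paper records as citations, and your final deduction that $Z_f \in E(\pi,\psi,\alpha)$ from the rationality of $f$, $f'$, and $\phi_f$ over subfields of $E(\pi,\psi,\alpha)$ is the intended one.
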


From now on, fix $\phi_{f}$ as in Lemma \ref{lemma zetaf}. We consider the element $G(\alpha)\in E(\pi,\psi,\alpha)\otimes\C$ defined in Subsection \ref{subsection: algebraic Hecke}, and denote its $\rho$-component by $G(\alpha)_{\rho}$, for $\rho:E(\pi,\psi,\alpha)\hookrightarrow\C$. If $\sigma\in\Aut(\C)$, we let $G(\alpha)_{\sigma}=G(\alpha)_{\rho}$, where $\rho$ is the restriction of $\sigma$ to $E(\pi,\psi,\alpha)$. Define a section $\varphi_{m,\kappa,\sigma}$ of $I\left(s+m-\frac{n}{2},{}^{\sigma}\alpha\right)$ by
\begin{eqnarray}
 &\varphi_{m,\kappa,\sigma}(g,s)=&\nonumber\\
 &\nonumber \mathbb{J}_{m,\kappa}\left(g,s+m-\frac{n}{2}\right)\otimes(2\pi i)^{[\tr:\Q](m+\kappa)n}G(\alpha)_{\sigma}^{n}({}^{\sigma}\phi_{f})(g,s+m-\frac{n}{2}).&
 \end{eqnarray}
The element $\mathbb{J}_{m,\kappa}$ is defined in \cite{cohomologicalII}, (1.2.7) (with a misprint correction, see \cite{guerbperiods}, 4.3). The Eisenstein series $E_{m,\kappa}=E_{m,\kappa,1}=E_{\varphi_{m,\kappa,1}}$ has no pole at $s=0$ (see for example (1.2.5) of \cite{siegelweil}, where $\chi=\alpha\|N_{\cm/\tr}\|^{-\kappa/2}$), and thus this defines an automorphic form, also denoted by $E_{m,\kappa}$, on $G^{(2)}(\A)$, which can be seen as an element of $H^{0}(S_{\C}^{(2)},\mathcal{E}_{m,\kappa}^{\can})$ (see \cite{guerbperiods}, 4.3). Using the differential operators $\Delta_{m,\kappa}$, as explained in {\em op. cit.}, we can define sections
\[ \tilde\varphi_{m,\kappa,\sigma}=\Delta_{m,\kappa}\varphi_{m,\kappa,\sigma}\]
for $\sigma\in\Aut(\C)$, and a corresponding Eisenstein series
\[ \tilde E_{m,\kappa}=E_{\tilde\varphi_{m,\kappa,1}}.\]
Then $\tilde E_{m,\kappa}$ equals $\Delta_{m,\kappa}E_{m,\kappa}$ when restricted to $G^{\sharp}(\A)$.

\begin{prop}\label{eisenstein rational} The Eisenstein series $E_{m,\kappa}$ and $\tilde E_{m,\kappa}$ are rational over $\Q(\alpha)$ with respect to the canonical models of $S^{(2)}_{\C}$ and $\mathcal{E}_{m,\kappa}$. Moreover, for any $\sigma\in\Aut(\C)$,
\[ {}^{\sigma}E_{m,\kappa}=E_{\varphi_{m,\kappa,\sigma}},\]
and a similar equation holds for $\tilde E_{m,\kappa}$.
\begin{proof} This follows by combining the ideas of Lemma 3.3.5.3 of \cite{harriscrelle} and Proposition 4.3.1 of \cite{guerbperiods}. Namely, in the latter, we just need to note that the character $\tilde\lambda$ is now given by
\[ \tilde\lambda(p)=\left(N_{\cm/\Q}\det\left(A(p)\right)\right)^{-m}\nu(p)^{-[\tr:\Q]nm}t_{\alpha}\left(\det\left(A(p)\right)\right)^{-1},\]
where $t_{\alpha}$ is the algebraic character of $\Res_{\cm/\Q}\Gm{\cm}$, defined over $\Q(\alpha)$, inverse of the infinity type of $\alpha$, so that the restriction to $\Sh(\Gm{\Q},N)$ is the Tate automorphic vector bundle $\Q\left(-[\tr:\Q]n(m+\kappa)\right)$.
\end{proof}
\end{prop}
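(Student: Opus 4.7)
The plan is to prove the displayed equation ${}^{\sigma}E_{m,\kappa}=E_{\varphi_{m,\kappa,\sigma}}$ first; the rationality over $\Q(\alpha)$ then follows formally. Indeed, for $\sigma\in\Aut(\C/\Q(\alpha))$, Lemma \ref{lemma zetaf} gives ${}^{\sigma}\phi_{f}(\cdot,m-\tfrac{n}{2})=\phi_{f}(\cdot,m-\tfrac{n}{2})$; and since $G(\alpha)\in(\Q(\alpha_{0})\otimes\C)^{\times}$ with $\Q(\alpha_{0})\subset\Q(\alpha)$, we have $G(\alpha)_{\sigma}=G(\alpha)_{1}$ and ${}^{\sigma}\alpha=\alpha$. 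Hence $\varphi_{m,\kappa,\sigma}=\varphi_{m,\kappa,1}$ at the relevant parameter, and so ${}^{\sigma}E_{m,\kappa}=E_{m,\kappa}$.

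For the displayed identity itself, I would follow the strategy of Proposition 4.3.1 of \cite{guerbperiods}, itself modeled on Lemma 3.3.5.3 of \cite{harriscrelle}. Since $(G^{(2)},X^{(2)})$ has reflex field $\Q$ and $\mathcal{E}_{m,\kappa}$ has a canonical model over $\Q$, the action of $\sigma\in\Aut(\C)$ on holomorphic sections can be computed via Milne's formalism for conjugating automorphic forms. The archimedean piece $\mathbb{J}_{m,\kappa}(\cdot,s+m-\tfrac{n}{2})$ is rigidly fixed (independent of $\sigma$) because the archimedean Shimura datum and the holomorphic weight are both defined over $\Q$, so only the finite part and the scalar normalization interact with the Galois action.

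The key calculation concerns the evaluation of $E_{m,\kappa}$ at the CM points of $S^{(2)}$, which factors through the quotient of $G^{(2)}$ by its derived subgroup (determined by $\det$ and $\nu$). The pullback of $\mathcal{E}_{m,\kappa}$ to the associated zero-dimensional Shimura variety is governed by the character
\[ \tilde\lambda(p)=\left(N_{\cm/\Q}\det(A(p))\right)^{-m}\nu(p)^{-[\tr:\Q]nm}t_{\alpha}(\det(A(p)))^{-1},\]
whose restriction to $\Sh(\Gm{\Q},N)$ is the Tate automorphic vector bundle $\Q(-[\tr:\Q]n(m+\kappa))$. The action of $\sigma$ then produces: (i) the Tate factor $(2\pi i)^{[\tr:\Q](m+\kappa)n}$ coming from the Galois action on the Tate twist, (ii) the conjugated finite section ${}^{\sigma}\phi_{f}$, and (iii) the factor $G(\alpha)_{\sigma}^{n}$ arising from the Galois transformation of the CM periods $\delta_{\sigma}[\alpha_{0}]$ attached to the finite-order part $\alpha_{0}$ of $\alpha$; the $n$-th power reflects that $\alpha$ enters through $\alpha\circ\det$ on the $n\times n$ block $A(p)$, and the product $G(\alpha)=\prod_{\sigma\in J_{\tr}}\delta_{\sigma}[\alpha_{0}]$ absorbs the contribution over the $[\tr:\Q]$ archimedean places of $\tr$.

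The main obstacle is the careful bookkeeping of these period normalizations, in particular the interplay between $\mathbb{J}_{m,\kappa}$ at archimedean CM points and the Galois transformation law for the Hodge decomposition, following \cite{guerbperiods}, Proposition 4.3.1 essentially line by line, with the only substantive new ingredient being the $t_{\alpha}^{-1}$ twist and its counterpart $G(\alpha)_{\sigma}^{n}$. The analogous statement for $\tilde E_{m,\kappa}$ is then formal: $\Delta_{m,\kappa}$ is rational over the relevant reflex field (contained in $\cm^{\Gal}$) and ${}^{\sigma}\Delta_{m,\kappa}=\Delta_{m,\kappa}({}^{\sigma}\Lambda)$, so applying $\sigma$ to $\tilde E_{m,\kappa}=\Delta_{m,\kappa}E_{m,\kappa}|_{S^{\sharp}_{\C}}$ yields ${}^{\sigma}\tilde E_{m,\kappa}=E_{\tilde\varphi_{m,\kappa,\sigma}}$.
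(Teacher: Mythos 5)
Your proposal follows essentially the same route as the paper's proof: both reduce to Proposition 4.3.1 of \cite{guerbperiods} (which in turn rests on Lemma 3.3.5.3 of \cite{harriscrelle}), and the key new ingredient in both is the identification of the character $\tilde\lambda$, now twisted by $t_{\alpha}^{-1}$, whose restriction to $\Sh(\Gm{\Q},N)$ is the Tate bundle $\Q(-[\tr:\Q]n(m+\kappa))$. Your elaboration of the three transforming factors (Tate twist, ${}^{\sigma}\phi_{f}$, and $G(\alpha)_{\sigma}^{n}$) and the deduction of $\Q(\alpha)$-rationality as a special case of the equivariance formula are both sound and merely fill in details the paper leaves implicit.
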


We define
\[ Z^{*}\left(m-\frac{n}{2},f,f',\tilde\varphi_{m,\kappa}\right)=\left(Z\left(m-\frac{n}{2},{}^{\sigma}f,{}^{\sigma}f',\tilde\varphi_{m,\kappa,\sigma}\right)\right)_{\sigma\in\Aut(\C)}. \]
The elements of this family only depend on the restrictions of elements $\sigma\in\Aut(\C)$ to $E(\pi,\psi,\alpha)$, and hence we can consider
\[ Z^{*}\left(m-\frac{n}{2},f,f',\tilde\varphi_{m,\kappa}\right)=\left(Z\left(m-\frac{n}{2},{}^{\rho}f,{}^{\rho}f',\tilde\varphi_{m,\kappa,\rho}\right)\right)_{\rho\in J_{E(\pi,\psi,\alpha)}}\] 
as an element of $E(\pi,\psi,\alpha)\otimes\C$. We can also define
\[ Z^{*}_{\infty}\left(m-\frac{n}{2},f,f',\tilde\varphi_{m,\kappa}\right)=\left(Z_{\infty}\left(m-\frac{n}{2},{}^{\rho}f,{}^{\rho}f',\tilde\varphi_{m,\kappa,\rho}\right)\right)_{\rho\in J_{E(\pi,\psi,\alpha)}},\]
which is an element of $E(\pi,\psi,\alpha)\otimes\C$. Note that the archimedean part of $\tilde\varphi_{m,\kappa,\rho}$ is independent of $\rho$, and hence so are the archimedean zeta integrals. Finally, we can define
\[ Z^{*}_{f}\left(m-\frac{n}{2},f,f',\tilde\varphi_{m,\kappa}\right)=\left(Z_{f}\left(m-\frac{n}{2},{}^{\rho}f,{}^{\rho}f',\tilde\varphi_{m,\kappa,\rho}\right)\right)_{\rho\in J_{E(\pi,\psi,\alpha)}} \]
and $Z^{*}_{f}\left(m-\frac{n}{2},f,f',\phi_{f}\right)$.


\begin{lemma}\label{lemma zeta} Let the notation and assumptions be as above. Then
\[ Z^{*}\left(m-\frac{n}{2},f,f',\tilde\varphi_{m,\kappa}\right)\in E(\pi,\psi,\alpha).\]
\begin{proof} Let
\begin{equation}\label{duality} \mathcal{L}_{m,\kappa}:H^{2d}(S_{\C}^{\sharp},\mathcal{E}_{\Lambda^{\sharp}[\kappa]}')\to\C \end{equation}
be the map defined by pairing with $\Delta_{m,\kappa}E_{m,\kappa}$ via Serre duality. Then, as in Lemma 4.5.3 of \cite{guerbperiods}, we have
\[ Z\left(m-\frac{n}{2},f,f',\tilde\varphi_{m,\kappa,1}\right)=\mathcal{L}_{m,\kappa}\left((\gamma,\gamma')^{\sharp}\right).\]
Let $\sigma\in\Aut(\C)$. The conjugate of (\ref{duality}) by $\sigma$ is now
\[ \mathcal{L}_{m,\kappa}:H^{2d}({}^{\sigma}S_{\C}^{\sharp},\mathcal{E}_{{}^{\sigma}\Lambda^{\sharp}[\kappa]}')\to\C, \]
which is given by cup product with ${}^{\sigma}\Delta_{m,\kappa}E_{m,\kappa}$ via Serre duality. It then follows that
\[ \sigma\left(Z\left(m-\frac{n}{2},f,f',\tilde\varphi_{m,\kappa,1}\right)\right)=\mathcal{L}_{m,\kappa}\left({}^{\sigma}(\gamma,\gamma')^{\sharp}\right),\]
which equals
\[ Z\left(m-\frac{n}{2},{}^{\sigma}f,{}^{\sigma}f',{}^{\sigma}\tilde\varphi_{m,\kappa,\sigma}\right) \]
by Proposition \ref{eisenstein rational} and the same reasoning as above. This finishes the proof of the lemma.
\end{proof}
\end{lemma}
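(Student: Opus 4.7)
The plan is to recast the global zeta integral $Z(m-n/2, f, f', \tilde\varphi_{m,\kappa,1})$ as a coherent cohomology pairing on $S^{\sharp}_{\C}$ and then exploit rationality of each ingredient to track the $\Aut(\C)$-action. By Proposition \ref{eisenstein rational}, the form $\tilde E_{m,\kappa} = \Delta_{m,\kappa} E_{m,\kappa}$ is a global section of the sheaf $\mathcal{E}_{\Lambda^{\sharp}[\kappa]}^{\can}$ on $S^{\sharp}_{\C}$ defined over $\Q(\alpha)$. Pairing against this section via Serre duality gives a map
\[ \mathcal{L}_{m,\kappa}: H^{2d}(S^{\sharp}_{\C}, \mathcal{E}_{\Lambda^{\sharp}[\kappa]}') \to \C, \]
and unfolding the Eisenstein series exactly as in Lemma 4.5.3 of \cite{guerbperiods} shows
\[ Z\left(m-\tfrac{n}{2}, f, f', \tilde\varphi_{m,\kappa,1}\right) = \mathcal{L}_{m,\kappa}\left((\gamma,\gamma')^{\sharp}\right). \]

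Next I would apply $\sigma \in \Aut(\C)$ to both sides. Conjugating the pairing above by $\sigma$ produces a pairing on $H^{2d}({}^{\sigma}S^{\sharp}_{\C}, \mathcal{E}_{{}^{\sigma}\Lambda^{\sharp}[\kappa]}')$ whose defining global section is the $\sigma$-conjugate of $\tilde E_{m,\kappa}$. By the explicit formula in Proposition \ref{eisenstein rational}, this conjugate is precisely $\tilde E_{\varphi_{m,\kappa,\sigma}}$. On the other hand, the cohomology class $(\gamma,\gamma')^{\sharp}$ is assembled from the arithmetic data $f_{0}, f_{0}', s_{\psi}, s_{\psi^{-1}}$ through the rational cup product and pullback along $S^{\sharp}_{\C} \hookrightarrow S_{\C} \times {}^{c}S_{\C}$, and so conjugates to ${}^{\sigma}(\gamma,\gamma')^{\sharp}$, which is the corresponding class attached to $({}^{\sigma}f, {}^{\sigma}f')$. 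Naturality of Serre duality under base change then yields
\[ \sigma\left(Z\left(m-\tfrac{n}{2}, f, f', \tilde\varphi_{m,\kappa,1}\right)\right) = Z\left(m-\tfrac{n}{2}, {}^{\sigma}f, {}^{\sigma}f', \tilde\varphi_{m,\kappa,\sigma}\right). \]
Replaying the same argument with ${}^{\rho}f$ in place of $f$ for each $\rho \in J_{E(\pi,\psi,\alpha)}$ shows that the family $Z^{*}$ transforms correctly under $\Aut(\C)$, and hence descends to an element of $E(\pi,\psi,\alpha)$ under the identification $E(\pi,\psi,\alpha)\otimes\C \cong \prod_{\rho}\C$.

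The main obstacle I anticipate is checking that the explicit definition of $\varphi_{m,\kappa,\sigma}$ really matches the $\sigma$-conjugate of $\varphi_{m,\kappa,1}$ as a section of the induced representation. This requires that the Gauss-sum correction $G(\alpha)_{\sigma}^{n}$ and the power $(2\pi i)^{[\tr:\Q](m+\kappa)n}$ built into $\varphi_{m,\kappa,\sigma}$ compensate precisely for the $\sigma$-action on $\phi_{f}$ controlled by Lemma \ref{lemma zetaf}, and for the algebraic character $t_{\alpha}$ appearing in the rationality argument of Proposition \ref{eisenstein rational}. Once this compatibility is spelled out, the remainder of the proof is a formal consequence of the naturality of Serre duality together with the rationality statements already established for $E_{m,\kappa}$, $\Delta_{m,\kappa}$, and the finite zeta integral.
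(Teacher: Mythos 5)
Your proof follows the paper's own argument step for step: the identity $Z(m-\tfrac{n}{2},f,f',\tilde\varphi_{m,\kappa,1})=\mathcal{L}_{m,\kappa}((\gamma,\gamma')^{\sharp})$ from Lemma 4.5.3 of \cite{guerbperiods}, $\sigma$-conjugation of the Serre-duality pairing and of the class $(\gamma,\gamma')^{\sharp}$, and the identification of the conjugated Eisenstein section via Proposition \ref{eisenstein rational}. The ``main obstacle'' you flag at the end is precisely the content of Proposition \ref{eisenstein rational} (namely ${}^{\sigma}E_{m,\kappa}=E_{\varphi_{m,\kappa,\sigma}}$ and its $\tilde E$ analogue), so it is already discharged and your argument is complete and equivalent to the paper's.
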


The main formula for the doubling method, proved by Li in \cite{li}, says that
\begin{equation}\label{main formula}
d^{S}\left(s-\frac{n}{2},\alpha\right)Z\left(s-\frac{n}{2},f,f',\phi\right)=  \end{equation}
\[(f,f')\prod_{v\in S}Z_{v}\left(s-\frac{n}{2},f,f',\phi\right)L^{\mot,S}(s,\pi\otimes\psi,\St,\alpha) \]
for any section $\phi$. Here
\[ d^{S}(s,\alpha)=\prod_{j=0}^{n-1}L^{S}(2s+n-j,\alpha|_{\A_{\tr}^{\times}}\varepsilon_{\cm}^{j}), \]
 where $\varepsilon_{\cm}$ is the quadratic character associated with the quadratic extension $\cm/\tr$. We can write
\[ \alpha|_{\A_{\tr}^{\times}}=\alpha_{0}\|\cdot\|_{\A_{\tr}^{\times}}^{\kappa}\]
with $\alpha_{0}$ of finite order, so that
\[ d^{S}(s,\alpha)=\prod_{j=0}^{n-1}L^{S}(2s+n-j+\kappa,\alpha_{0}\varepsilon_{\cm}^{j}).\]
We let
\[ d^{*}(s,\alpha)=\prod_{j=0}^{n-1}L^{*}(2s+n-j+\kappa,\alpha_{0}\varepsilon_{\cm}^{j})\in\Q(\alpha_{0})\otimes\C,\]
and we define similarly $d^{*,S}(s,\alpha)$ by removing the local factors at primes of $S$. We can deduce from (\ref{main formula}) that
\begin{equation}\label{main formula star}
d^{*,S}\left(m-\frac{n}{2},\alpha\right)Z^{*}\left(m-\frac{n}{2},f,f',\tilde\varphi_{m,\kappa}\right)(f,f')^{*,-1}=  \end{equation}
\[Z_{f}^{*}\left(m-\frac{n}{2},f,f',\tilde\varphi_{m,\kappa}\right)Z_{\infty}^{*}\left(m-\frac{n}{2},f,f',\tilde\varphi_{m,\kappa}\right)L^{*,\mot,S}(m,\pi\otimes\psi,\St,\alpha) \]

\begin{lemma}\label{lemma d} We have
\[ d^{*,S}\left(m-\frac{n}{2},\alpha\right)\sim_{\Q(\alpha_{0})}(2\pi i)^{[\tr:\Q]\left((2m+\kappa)n-\frac{n(n-1)}{2}\right)}D_{\tr}^{\lfloor\frac{n+1}{2}\rfloor/2}\delta[\varepsilon_{\cm}]^{\lfloor\frac{n}{2}\rfloor}G(\alpha)^{n}. \]
\begin{proof} First, suppose that $0\leq j\leq n-1$ is even. Note that $2m-j+\kappa$ is even and positive, and hence is a critical integer of $L^{*}(s,\alpha_{0})$, because the motive $\Res_{\tr/\Q}[\alpha_{0}]$ is purely of type $(0,0)$ and the Frobenius involution acts as $(-1)^{\kappa}$. Since Deligne's conjecture is known for $\Res_{\tr/\Q}[\alpha_{0}]$, we get
\[ L^{*}(2m-j+\kappa,\alpha_{0})\sim_{\Q(\alpha_{0})}(2\pi i)^{[\tr:\Q](2m-j+\kappa)}c^{\pm}[\alpha_{0}],\]
where $\pm=(-1)^{2m-j+\kappa}=(-1)^{\kappa}$. Here we are writing 
\[c^{\pm}[\alpha_{0}]=c^{\pm}\left(\Res_{\tr/\Q}[\alpha_{0}]\right).\] Similarly, if $0\leq j\leq n-1$ is odd, then $2m-j+\kappa$ is a critical integer for the motive $\Res_{\tr/\Q}[\alpha_{0}\varepsilon_{\cm}]$ and
\[ L^{*}(2m-j+\kappa,\alpha_{0}\varepsilon_{\cm})\sim_{\Q(\alpha_{0})}(2\pi i)^{[\tr:\Q](2m-j+\kappa)}c^{\mp}[\alpha_{0}\varepsilon_{\cm}].\]
We know use Remark 2.2.1 of \cite{guerbperiods}, together with Proposition 2.2 of \cite{yoshida}, to get
\[ c^{\pm}[\alpha_{0}]\sim_{\Q(\alpha_{0})}\delta[\alpha_{0}] \]
and
\[ c^{\mp}[\alpha_{0}\varepsilon_{\cm}]\sim_{\Q(\alpha_{0})}\delta[\alpha_{0}]\delta[\varepsilon_{\cm}]D_{\tr}^{-1/2}. \]
The lemma follows from these computations, combined with (\ref{delta0 G}) and the fact that
\[ d^{*,S}\left(m-\frac{n}{2},\alpha\right)\sim_{\Q(\alpha)}d^{*}\left(m-\frac{n}{2},\alpha\right).\]
\end{proof}
\end{lemma}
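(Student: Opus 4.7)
The plan is to split the product and apply the known case of Deligne's conjecture for Artin motives. Write
\[ d^{*,S}\left(m-\frac{n}{2},\alpha\right) = \prod_{j=0}^{n-1} L^{*,S}(2m-j+\kappa,\, \alpha_{0}\varepsilon_{\cm}^{j}), \]
and group the factors according to the parity of $j$: for even $j$ the character is $\alpha_{0}$, and for odd $j$ it is $\alpha_{0}\varepsilon_{\cm}$. In both cases the associated motive $\Res_{\tr/\Q}[\alpha_{0}\varepsilon_{\cm}^{j}]$ over $\Q$ with coefficients in $\Q(\alpha_{0})$ is pure of type $(0,0)$.

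Next I would verify that $2m-j+\kappa$ is a critical integer for the appropriate motive and identify the correct sign $\epsilon\in\{\pm\}$ for the period $c^{\epsilon}$, which is determined by the parity of $2m-j+\kappa$ together with the action of the infinite Frobenius (depending on $\kappa$ and on whether $\varepsilon_{\cm}^{j}$ is trivial). Applying Deligne's theorem for Artin motives then yields
\[ L^{*}(2m-j+\kappa,\, \alpha_{0}\varepsilon_{\cm}^{j}) \sim_{\Q(\alpha_{0})} (2\pi i)^{[\tr:\Q](2m-j+\kappa)}\, c^{\epsilon}[\alpha_{0}\varepsilon_{\cm}^{j}], \]
and removing the finitely many bad local factors indexed by $S$ only modifies the expression by a nonzero element of $\Q(\alpha_{0})$. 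Using Remark 2.2.1 of \cite{guerbperiods} together with Proposition 2.2 of \cite{yoshida}, I would then rewrite
\[ c^{\pm}[\alpha_{0}] \sim_{\Q(\alpha_{0})} \delta[\alpha_{0}], \qquad c^{\mp}[\alpha_{0}\varepsilon_{\cm}] \sim_{\Q(\alpha_{0})} \delta[\alpha_{0}]\,\delta[\varepsilon_{\cm}]\,D_{\tr}^{-1/2}. \]

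To assemble the final formula, I would count: among $j\in\{0,\dots,n-1\}$ there are $\lfloor(n+1)/2\rfloor$ even values and $\lfloor n/2\rfloor$ odd ones, so the product of $c$-periods equals $\delta[\alpha_{0}]^{n}\,\delta[\varepsilon_{\cm}]^{\lfloor n/2\rfloor}\,D_{\tr}^{-\lfloor n/2\rfloor/2}$ up to $\Q(\alpha_{0})^{\times}$. The total power of $2\pi i$ is $[\tr:\Q]\sum_{j=0}^{n-1}(2m-j+\kappa) = [\tr:\Q]\bigl((2m+\kappa)n - n(n-1)/2\bigr)$. Finally, invoking equation (\ref{delta0 G}) gives $\delta[\alpha_{0}]^{n}\sim_{\Q(\alpha_{0})} D_{\tr}^{n/2}\,G(\alpha)^{n}$, so that the cumulative exponent of $D_{\tr}$ becomes $n/2 - \lfloor n/2\rfloor/2 = \lfloor(n+1)/2\rfloor/2$, matching the target.

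The main obstacle will be the careful bookkeeping of signs in the $c^{\pm}$ periods: one must ensure that for each $j$ the parity of $2m-j+\kappa$ combined with the sign of the Artin motive at infinity produces exactly the pairing $c^{\pm}[\alpha_{0}]$ for even $j$ and $c^{\mp}[\alpha_{0}\varepsilon_{\cm}]$ for odd $j$ so that both period identities above can be applied as stated. One should also check that every $2m-j+\kappa$ lies in the critical range (avoiding poles at $s=0,1$ of a possibly trivial Dirichlet factor), but the hypothesis $m\geq(n-\kappa)/2$ together with $0\le j\le n-1$ forces $2m-j+\kappa\geq 1$, which is enough for a motive of Hodge type $(0,0)$ whose critical strip consists of all positive integers of the correct parity.
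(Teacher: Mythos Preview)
Your proposal is correct and follows exactly the same approach as the paper: split by parity of $j$, apply the known case of Deligne's conjecture for the Artin motives $\Res_{\tr/\Q}[\alpha_{0}]$ and $\Res_{\tr/\Q}[\alpha_{0}\varepsilon_{\cm}]$, convert the $c^{\pm}$-periods to $\delta$-periods via Remark~2.2.1 of \cite{guerbperiods} and Proposition~2.2 of \cite{yoshida}, and assemble using (\ref{delta0 G}). In fact your counting of the even/odd contributions and the exponent bookkeeping for $D_{\tr}$ and $2\pi i$ is more explicit than the paper's own ``the lemma follows from these computations.''
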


\subsection{Modified periods}
 A theorem of Garrett (\cite{garrett}) says that the archimedean zeta integral $Z_{\infty}\left(m-\frac{n}{2},f,f',\tilde\varphi_{m}\right)$ is non-zero (and, moreover, belongs to $\cm^{\Gal}$), and we define the modified (Petersson) period
 \[ \mathfrak{Q}^{\Pet}(m;\pi,\psi,\alpha)=\frac{(f,f')^{-1}}{Z_{\infty}\left(m-\frac{n}{2},f,f',\tilde\varphi_{m,\kappa}\right)}.\]
It follows from Lemma \ref{lemma petersson} that
 \[ \mathfrak{Q}^{\Pet}(m;\pi,\psi,\alpha)\sim_{E(\pi,\psi,\alpha)\otimes\cm^{\Gal}}(2\pi i)^{-2a_{0}}Q^{\Pet}(\pi)^{-1}P(\psi;\alpha).\]
More generally, we can define $\mathfrak{Q}^{\Pet,*}(m;\pi,\psi,\alpha)\in\left(E(\pi,\psi,\alpha)\otimes\C\right)^{\times}$ as
\begin{eqnarray}\nonumber
\mathfrak{Q}^{\Pet,*}(m;\pi,\psi,\alpha)&=&\left(\mathfrak{Q}^{\Pet}(m;\pi^{\rho},\psi^{\rho},\alpha^{\rho})\right)_{\rho\in J_{E(\pi,\psi,\alpha)}}\\
\nonumber &=&\frac{(f,f')^{*,-1}}{Z_{\infty}^{*}\left(m-\frac{n}{2},f,f',\tilde\varphi_{m,\kappa}\right)}.
\end{eqnarray}

The doubling zeta integral agains the Eisenstein series $\tilde E_{m,\kappa}$ defines a bilinear form
\[ B^{\alpha}:H^{d}_{!}(S_{\C},\mathcal{E})[\pi\otimes\psi]\times H^{d}_{!}(\bar S_{\C},\mathcal{E}^{*})[\pi^{\vee}\otimes\psi^{-1}\otimes\alpha^{-1}]\to\C,\]
which is moreover rational over $E(\pi,\psi,\alpha)$. Here $\mathcal{E}$ and $\mathcal{E}^{*}$ are the automorphic vector bundles determined by $\pi$ and $\pi^{\vee}\otimes\alpha^{-1}$. In particular, with our choice of $f$ and $f'$, we have that $B^{\alpha}(f,f')\in E(\pi,\psi,\alpha)$. Moreover, for any $\sigma\in\Aut(\C)$,
\begin{equation}\label{Galois equivariance Bzeta} \sigma\left(B^{\alpha}(f,f')\right)=B^{{}^{\sigma}\alpha}\left({}^{\sigma}f,{}^{\sigma}f'\right).\end{equation}
By our multiplicity assumptions, any other bilinear form, such as the Petersson integral, must be a scalar multiple of $B^{\alpha}$. In particular, there exists an element $Q(\pi,\psi,\alpha)\in\C^{\times}$ such that
\[ B^{\alpha}(f,f')=Q(\pi,\psi,\alpha)(f,f').\]
We can define $Q^{*}(\pi,\psi,\alpha)\in(E(\pi,\psi,\alpha)\otimes\C)^{\times}$ by taking
\[ Q^{*}(\pi,\psi,\alpha)=\left(Q({}^{\rho}\pi,{}^{\rho}\psi,{}^{\rho}\alpha)\right)_{\rho\in J_{E(\pi,\psi,\alpha)}}.\]
By (\ref{Galois equivariance Bzeta}), we have that
\begin{equation}\label{Pet = Q} (f,f')^{*,-1}\sim_{E(\pi,\psi,\alpha)}Q^{*}(\pi,\psi,\alpha).\end{equation}

\begin{rem} In the above computations, $Q(\pi,\psi,\alpha)$ depends a priori on the Eisenstein series $\tilde E_{m,\kappa}$, and hence on the integer $m$. However, (\ref{Pet = Q}) shows that, up to multiplication by an element in $E(\pi,\psi,\alpha)\subset E(\pi,\psi,\alpha)\otimes\C$, $Q^{*}(\pi,\psi,\alpha)$ does not depend on $m$ or the Eisenstein series. 
\end{rem}

We define
\[ \mathfrak{Q}^{*}(m;\pi,\psi,\alpha)=\frac{Q^{*}(\pi,\psi,\alpha)}{Z_{\infty}^{*}\left(m-\frac{n}{2},f,f',\tilde\varphi_{m,\kappa}\right)}.\]
We have that
\[ \mathfrak{Q}^{\Pet,*}(m;\pi,\psi,\alpha)\sim_{E(\pi,\psi,\alpha)}\mathfrak{Q}^{*}(m;\pi,\psi,\alpha).\]

\subsection{The main theorem} Before stating our main theorem, we recall all the hypothesis and assumptions that we have made so far. Thus, $\cm/\tr$ is a CM extension, $\Phi$ is a CM type, and $\pi$ is an automorphic representation of $G(\A)$, satisfying hypotheses \ref{hypotheses} for a parameter \[ \mu=\left((a_{\tau,1},\dots,a_{\tau,n})_{\tau\in\Phi};a_{0}\right)\] (recall as well the assumption that $\pi$ can be conjugated to ${}^{\sigma}\pi$ with the desired properties). We also assume that $\pi^{\vee}\cong\pi\otimes\|\nu\|^{2a_{0}}$, $\pi$ contributes to antiholomorphic cohomology and satisfies the multiplicity assumption (\ref{hypomult}).

We also have algebraic Hecke characters $\psi$ and $\alpha$ of $\cm$. The infinity type of $\psi$ is $(m_{\tau})_{\tau\in J_{\cm}}$, and that of $\alpha$ is given by an integer $\kappa$ at places of $\Phi$, and by $0$ at places outside $\Phi$. We define the number field $E(\pi,\psi,\alpha)$ as in Subsection \ref{subsection: petersson}.


\begin{thm}\label{main theorem} Keep the notation and assumptions as above, and let $m>n-\frac{\kappa}{2}$ be an integer satisfying (\ref{mainineq}). Then
	\[ L^{*,\mot,S}\left(m,\pi\otimes\psi,\St,\alpha\right)\sim_{E(\pi,\psi,\alpha)}\]\[(2\pi i)^{[\tr:\Q](mn-n(n-1)/2)}D_{\tr}^{\lfloor\frac{n+1}{2}\rfloor/2}\delta[\varepsilon_{\cm}]^{\lfloor\frac{n}{2}\rfloor}\mathfrak{Q}^{*}(m;\pi,\psi,\alpha).\]
\begin{proof} We use formula (\ref{main formula star}). By Lemma \ref{lemma zetaf}, we have that
\[ Z_{f}^{*}\left(m-\frac{n}{2},f,f',\tilde\varphi_{m,\kappa}\right)\sim_{E(\pi,\psi,\alpha)}(2\pi i)^{[\tr:\Q](m+\kappa)n}G(\alpha)^{n}.\]
Also, Lemma \ref{lemma zeta} says that
\[ Z^{*}\left(m-\frac{n}{2},f,f',\tilde\varphi_{m,\kappa}\right)\sim_{E(\pi,\psi,\alpha)}1.\]
The formula in the theorem follows immediately from these, Lemma \ref{lemma d} and the definition of $\mathfrak{Q}(m;\pi,\psi,\alpha)$.

\end{proof}
\end{thm}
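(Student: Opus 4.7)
The plan is to start from the Galois-equivariant Piatetski-Shapiro-Rallis-Li identity (\ref{main formula star}) and solve for $L^{*,\mot,S}(m,\pi\otimes\psi,\St,\alpha)$, then substitute the algebraic values of each remaining factor supplied by the preceding lemmas. Rewriting (\ref{main formula star}) as
\[ L^{*,\mot,S}(m,\pi\otimes\psi,\St,\alpha) = \frac{d^{*,S}(m-\tfrac{n}{2},\alpha)\, Z^{*}(m-\tfrac{n}{2},f,f',\tilde\varphi_{m,\kappa})\, (f,f')^{*,-1}}{Z_{f}^{*}(m-\tfrac{n}{2},f,f',\tilde\varphi_{m,\kappa})\, Z_{\infty}^{*}(m-\tfrac{n}{2},f,f',\tilde\varphi_{m,\kappa})}, \]
the task reduces to evaluating each factor on the right modulo $E(\pi,\psi,\alpha)$.

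First, Lemma \ref{lemma zeta} shows the global zeta integral satisfies $Z^{*}\sim_{E(\pi,\psi,\alpha)} 1$, and identity (\ref{Pet = Q}) replaces $(f,f')^{*,-1}$ with $Q^{*}(\pi,\psi,\alpha)$ up to $E(\pi,\psi,\alpha)$. Next, Lemma \ref{lemma zetaf} gives $Z_{f}^{*} \sim_{E(\pi,\psi,\alpha)} (2\pi i)^{[\tr:\Q](m+\kappa)n}G(\alpha)^{n}$, while Lemma \ref{lemma d} supplies
\[ d^{*,S}(m-\tfrac{n}{2},\alpha) \sim_{\Q(\alpha_0)} (2\pi i)^{[\tr:\Q]((2m+\kappa)n - n(n-1)/2)} D_{\tr}^{\lfloor(n+1)/2\rfloor/2} \delta[\varepsilon_{\cm}]^{\lfloor n/2\rfloor} G(\alpha)^{n}. \]
Since $\Q(\alpha_0)\subset E(\pi,\psi,\alpha)$, all these equivalences are compatible in $E(\pi,\psi,\alpha)\otimes\C$, so the subsequent cancellations are genuine and not merely projective.

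Substituting into the displayed ratio, the two copies of $G(\alpha)^{n}$ cancel, the powers of $2\pi i$ combine via $(2m+\kappa)n - n(n-1)/2 - (m+\kappa)n = mn - n(n-1)/2$, and the remaining factor $Q^{*}(\pi,\psi,\alpha)/Z_{\infty}^{*}$ is precisely $\mathfrak{Q}^{*}(m;\pi,\psi,\alpha)$ by definition. This yields the formula asserted in the theorem.

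The main obstacle at this stage is not conceptual but bookkeeping: one must verify that each $\sim_{E(\pi,\psi,\alpha)}$ relation holds over exactly this field rather than some larger extension such as $E(\pi,\psi,\alpha)\otimes\cm^{\Gal}$. The heavy lifting to secure this has already been carried out in Proposition \ref{eisenstein rational} (rationality of $\tilde E_{m,\kappa}$), Lemma \ref{lemma zetaf} (Galois action on the finite sections $\phi_{f}$), Lemma \ref{lemma zeta} (coherent-cohomological interpretation of the global zeta integral via Serre duality), and in the Deligne-conjecture computation underlying Lemma \ref{lemma d}. Given these, the proof of Theorem \ref{main theorem} is a direct assembly.
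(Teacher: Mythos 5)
Your proposal is correct and follows essentially the same route as the paper's proof: starting from formula (\ref{main formula star}), invoking Lemmas \ref{lemma zeta}, \ref{lemma zetaf}, \ref{lemma d} together with the identity (\ref{Pet = Q}), and cancelling the factors $G(\alpha)^{n}$ and the powers of $2\pi i$ as you compute. The exponent bookkeeping $(2m+\kappa)n - n(n-1)/2 - (m+\kappa)n = mn - n(n-1)/2$ is exactly the step the paper leaves implicit, and your remark that all equivalences hold over $E(\pi,\psi,\alpha)$ rather than merely over $E(\pi,\psi,\alpha)\otimes\cm^{\Gal}$ captures the point of the preceding lemmas.
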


\begin{rem} The condition $m>n-\kappa/{2}$ guarantees that we are in the range of absolute convergence for the Eisenstein series involved in the method. In a series of papers (\cite{cohomologicalI}, \cite{cohomologicalII}, \cite{siegelweil}), Harris extended the formulas for special values and applications to period relations when the base field $\tr$ is $\Q$. One of the main ingredients of Harris's method to extend the results is a careful study of the theta correspondence and its rationality properties. It should be possible to generalize these results to the setup of this paper, namely, to the case of a general CM extension $\cm/\tr$. 

\end{rem}

\subsection{A refinement}
Lemma \ref{lemma petersson} gives us a factorization of $(f,f')$ in terms of periods associated to $\pi$, $\psi$ and $\alpha$ respectively. This will lead to a finer result on the special values as in Theorem $1$ in \cite{guerbperiods}.

Unfortunately, the relation in Lemma \ref{lemma petersson} is only shown under action of $\Gal(\overline{\Q}/\cm^{\Gal})$. We hope to prove a $\Gal(\overline{\Q}/\Q)$ version in the near future.

Before we state the main formula, let us define two more factors which will appear.

\begin{dfn}
\begin{enumerate}
\item Let $j \in \cm$ be a purely imaginary element, i.e., $\overline{j}=-j$ where $\overline{j}$ refers to the complex conjugation of $j$ in the CM field $\cm$. We define 
\[ \Icm=\prod\limits_{\tau\in\Phi}\tau(j).\] 
Its image in $\C^{\times}/\Q^{\times}$ does not depend on the choice of the purely imaginary element $j$ or the CM type $\Phi$.
\item Let $E$ be a number field containing $\cm^{\Gal}$. 
We fix $\rho_{0}$ an element in $J_{E}$. For any $\rho\in J_{E}$, we define a sign $e_{\Phi}(\rho)$ as $(-1)^{\#(\Phi\backslash g\Phi)} $ by taking any $g\in\Aut(\C)$ such that $g\rho_{0}=\rho$. We can see easily that it does not depend on the choice of $g$.

We define $e_{\Phi}=(e_{\Phi}(\rho))_{\rho\in J_{E}}$ as an element of $E\otimes \C$.

\end{enumerate}
\end{dfn}


\begin{coro}\label{special value unitary group}
With the same assumption as in Theorem \ref{main theorem}, we have that
\[ L^{*,\mot,S}\left(m,\pi\otimes\psi,\St,\alpha\right)\sim_{E(\pi,\psi,\alpha);F^{\Gal}}\]\[(2\pi i)^{[\tr:\Q](mn-n(n-1)/2)-2a_{0}}\Icm^{[n/2]} (D_{\tr}^{1/2})^{n}e_{\Phi}^{mn}Q^{*}(\pi)^{-1}P^{*}(\psi,\alpha).\]

\end{coro}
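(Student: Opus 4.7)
The plan is to start from Theorem \ref{main theorem} and unfold $\mathfrak{Q}^{*}(m;\pi,\psi,\alpha)$ into its Petersson-norm constituents via Lemma \ref{lemma petersson}, then rewrite the motivic prefactor $D_{\tr}^{\lfloor(n+1)/2\rfloor/2}\delta[\varepsilon_{\cm}]^{\lfloor n/2\rfloor}$ in the more explicit form $\Icm^{[n/2]}(D_{\tr}^{1/2})^{n}e_{\Phi}^{mn}$. Lemma \ref{lemma petersson} is only valid modulo $\cm^{\Gal}$, which is precisely why the conclusion is stated only up to $\sim_{E(\pi,\psi,\alpha);\cm^{\Gal}}$.

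First, Theorem \ref{main theorem} gives
\[ L^{*,\mot,S}(m,\pi\otimes\psi,\St,\alpha)\sim_{E(\pi,\psi,\alpha)}(2\pi i)^{[\tr:\Q](mn-n(n-1)/2)}D_{\tr}^{\lfloor(n+1)/2\rfloor/2}\delta[\varepsilon_{\cm}]^{\lfloor n/2\rfloor}\mathfrak{Q}^{*}(m;\pi,\psi,\alpha). \]
Combining the end-of-Subsection-3.5 relation $\mathfrak{Q}^{\Pet,*}\sim_{E(\pi,\psi,\alpha)}\mathfrak{Q}^{*}$ with Lemma \ref{lemma petersson}, and using Garrett's theorem to locate the components of $Z^{*}_{\infty}$ in $\cm^{\Gal}$, one obtains
\[ \mathfrak{Q}^{*}(m;\pi,\psi,\alpha)\sim_{E(\pi,\psi,\alpha)\otimes\cm^{\Gal}}(2\pi i)^{-2a_{0}}Q^{*}(\pi)^{-1}P^{*}(\psi;\alpha), \]
with $Q^{*}(\pi)=Q^{\Pet,*}(\pi)$ the Petersson period of Subsection \ref{subsection: petersson}. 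Substituting this back already reproduces the $(2\pi i)^{[\tr:\Q](mn-n(n-1)/2)-2a_{0}}Q^{*}(\pi)^{-1}P^{*}(\psi;\alpha)$ piece of the desired expression.

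It then remains to establish the period identity
\[ D_{\tr}^{\lfloor(n+1)/2\rfloor/2}\delta[\varepsilon_{\cm}]^{\lfloor n/2\rfloor}\sim_{\Q;\cm^{\Gal}}\Icm^{[n/2]}(D_{\tr}^{1/2})^{n}e_{\Phi}^{mn}. \]
The exponent identity $\lfloor(n+1)/2\rfloor+\lfloor n/2\rfloor=n$ handles the $D_{\tr}$ bookkeeping, and the classical Gauss-sum-type relation $\delta[\varepsilon_{\cm}]\sim_{\Q}\Icm$ modulo $\cm^{\Gal}$ (as hinted in the introduction) accounts for $\Icm^{[n/2]}$. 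The residual sign $e_{\Phi}^{mn}$ is the discrepancy between the naive component-wise conjugates of $\delta[\varepsilon_{\cm}]^{\lfloor n/2\rfloor}$ and the conjugates ${}^{\rho}\Icm^{[n/2]}$ viewed inside $E(\pi,\psi,\alpha)\otimes\C$, once we also absorb the Galois action of $\rho$ on the $(2\pi i)^{[\tr:\Q]mn}$ part of the prefactor.

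The main obstacle is precisely this last bookkeeping of signs. It is easy to check that the two prefactors agree up to a sign in each conjugate component, but pinning down that the product of these per-embedding signs assembles into exactly $e_{\Phi}^{mn}$, with the correct $mn$-dependence, requires a case analysis according to the parity of $n$ and a Galois-equivariant tracking of $\Icm$ against the CM combinatorics defining $e_{\Phi}(\rho)$, in the spirit of the period-sign computations of Section 3 of \cite{guerbperiods}.
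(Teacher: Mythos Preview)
Your overall strategy --- start from Theorem \ref{main theorem}, replace $\mathfrak{Q}^{*}$ by $(2\pi i)^{-2a_{0}}Q^{*}(\pi)^{-1}P^{*}(\psi;\alpha)$ via Lemma \ref{lemma petersson}, then rewrite the prefactor --- is exactly the paper's approach. Two points, however, need correcting.

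First, the relation for $\delta[\varepsilon_{\cm}]$ is sharper than you state: by Lemma 2.4.2 of \cite{guerbperiods} together with Proposition 2.2 of \cite{yoshida}, one has
\[ \delta[\varepsilon_{\cm}]\sim_{\Q}\Icm\, D_{\tr}^{1/2}, \]
already over $\Q$, not merely modulo $\cm^{\Gal}$. Substituting this into $D_{\tr}^{\lfloor(n+1)/2\rfloor/2}\delta[\varepsilon_{\cm}]^{\lfloor n/2\rfloor}$ and using $\lfloor(n+1)/2\rfloor+\lfloor n/2\rfloor=n$ gives $\Icm^{[n/2]}(D_{\tr}^{1/2})^{n}$ on the nose.

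Second, and more importantly, your treatment of $e_{\Phi}^{mn}$ is misconceived. It is \emph{not} a residual sign arising from comparing conjugates of $\delta[\varepsilon_{\cm}]$ with those of $\Icm$, nor does it come from any Galois action on $(2\pi i)^{[\tr:\Q]mn}$; there is no parity case analysis to perform. The paper simply checks directly from the definition that $e_{\Phi}\in E\otimes\cm^{\Gal}$: if $g\in\Aut(\C/\cm^{\Gal})$ and $\rho=h\rho_{0}$, then $gh\Phi=h\Phi$, so $e_{\Phi}(g\rho)=e_{\Phi}(\rho)$. Hence $e_{\Phi}^{mn}\sim_{E(\pi,\psi,\alpha);\cm^{\Gal}}1$, and the factor can be freely inserted or removed in the claimed relation. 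It is kept in the statement only because it is the shape predicted by Deligne's conjecture (see the Remark following the corollary), not because it records any actual discrepancy at this level of precision. What you call ``the main obstacle'' is therefore vacuous; once you know $e_{\Phi}\in E\otimes\cm^{\Gal}$, the proof is complete.
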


\begin{rem}
 \begin{enumerate}
 \item We identify $(2 \pi i)^{mnd(\tr)} \Icm^{[n/2]} (D_{\tr}^{1/2})^{n}$ with 
 \[ 1\otimes (2 \pi i)^{mnd(\tr)} \Icm^{[n/2]} (D_{\tr}^{1/2})^{n}\]
 as an element in $E(\Pi,\eta)\otimes \C$.

 \item It is not difficult to see that if $g\in\Aut(\C)$ fixes $\cm^{\Gal}$ then it fixes 
 \[ \Icm^{[n/2]} (D_{\tr}^{1/2})^{n}e_{\Phi}^{mn}.\]
 Consequently, it is an element inside $E(\pi,\psi,\alpha)\otimes \cm^{\Gal}$ and hence can be ignored here. We still keep it because it is predicted by Deligne's conjecture if we want a finer result up to $E(\pi,\psi,\alpha)\otimes\Q$, or equivalently, under the action of the full Galois group $\Gal(\overline{\Q}/\Q)$.
 
 \end{enumerate}
 \end{rem}
 
 \begin{proof}
 By the proof of Lemma $2.4.2$ of \cite{guerbperiods} and Proposition $2.2$ of \cite{yoshida} we know that \[\delta[\varepsilon_{\cm}]\sim_{\Q} \Icm D_{\tr}^{1/2}.\]

Moreover, Lemma \ref{lemma petersson} implies that
 \[ \mathfrak{Q}^{*}(m;\pi,\psi,\alpha)\sim_{E(\pi,\psi,\alpha)\otimes\cm^{\Gal}}(2\pi i)^{-2a_{0}}Q^{*}(\pi)^{-1}P^{*}(\psi,\alpha).\]
 
It remains to show that $e_{\Phi}\in E\otimes \cm^{\Gal}$. In fact, let $\rho\in J_{E}$ and $g\in\Aut(\C/\cm^{\Gal})$. We take $h\in\Aut(\C)$ such that $\rho=h \rho_{0}$. By definition $e_{\Phi}(\rho)=(-1)^{\#(\Phi\backslash h\Phi)} $ and $e_{\Phi}(g \rho)=(-1)^{\#(\Phi\backslash gh\Phi)}$. Since $g$ fixes $\cm^{\Gal}$, we have $gh\Phi=h\Phi$ and hence $e_{\Phi}(g \rho)=e_{\Phi}( \rho)$. We conclude that $e_{\Phi}\in E\otimes \cm^{\Gal}$ by Definition-Lemma $1.1$ of \cite{lincomptesrendus}. The corollary then follows from Theorem \ref{main theorem}.
 
 \end{proof}

\begin{rem}
We expect that Lemma \ref{lemma petersson} is true up to factors in \\$E(\pi,\psi,\alpha)$. Moreover, we hope to show that 
\[ Z_{\infty}^{*}\left(m-\frac{n}{2},f,f',\tilde\varphi_{m,\kappa}\right)\sim_{E(\pi,\psi,\alpha)} e_{\Phi}^{mn}.\]
By the method explained in section $9.4$ of \cite{linthesis} and Blasius's proof of Deligne's conjecture for algebraic Hecke characters (\cite{blasiusannals}), we can reduce to show that certain archimedean zeta integral belongs to $\Q$. Garett proved this for particular cases (see \cite{garrett}). We hope to solve this in the future and then the above corollary is true up to $E(\pi,\psi,\alpha)$.
\end{rem}

\section{Applications to general linear groups}

\subsection{Transfer from similitude unitary groups to unitary groups}
\text{}\\

Let $\pi$ be an automorphic representation of $G(V)(\AQ)$. We want to consider the restriction of $\pi$ to $U(V)(\AQ)$.  We sketch the construction of \cite{labesse-schwermer} in our case.

\begin{dfn}
\begin{enumerate}
\item Let $\pi_{1}$ and $\pi_{2}$ be two admissible irreducible representation of $G(V)(\AQ)$. We say that they are $\mathcal{E}$-equivalent if there exists a character $\chi$ of $U(V)(\AQ)\backslash G(V)(\AQ)$ such that $\pi_{1}\cong \pi_{2}\otimes \chi$.
\item Let $\pi_{0}$ be an admissible irreducible representation of $U(V)(\AQ)$ and $g$ be an element in $G(V)(\AQ)$. We define $\pi^{g}$, a new representation on $U(V)(\AQ)$, by $\pi^{g}(x)=\pi(gxg^{-1})$.
\item Let $\pi_{0,1}$ and $\pi_{0,2}$ be two admissible irreducible representation of $U(V)(\AQ)$. We say that they are $\mathcal{L}$-equivalent if there exists $g\in G(V)(\AQ)$ such that $\pi_{0,1}\cong \pi_{0,2}^{g}$.
\end{enumerate}
\end{dfn}

\begin{lemma}
Let $\pi$ be an admissible irreducible automorphic representation of $G(V)(\AQ)$. The restriction of $\pi$ to $U(V)(\AQ)$ is a direct sum of admissible irreducible representations in the same $\mathcal{L}$-equivalence class. This gives a bijection of the $\mathcal{E}$-equivalence classes of admissible irreducible representations of $G(V)(\AQ)$ and the $\mathcal{L}$-equivalence classes of admissible irreducible representations of $G(V)(\AQ)$.

Moreover, if we restrict to the cuspidal spectrum then we get a bijection of equivalence classes of cuspidal representations on both sides.
\end{lemma}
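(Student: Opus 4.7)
The plan is to reduce to a Clifford/Mackey-style analysis of the short exact sequence
\[ 1 \longrightarrow U(V) \longrightarrow G(V) \stackrel{\nu}{\longrightarrow} \Gm{\Q} \longrightarrow 1, \]
where $\nu$ is the similitude character. Adelically, this identifies the character group of $U(V)(\AQ)\backslash G(V)(\AQ)$ with the Hecke characters of $\AQ^{\times}/\Q^{\times}$ pulled back along $\nu$. First, I would establish the decomposition statement: since $G(V)/U(V)$ is abelian, for admissible irreducible $\pi$ the restriction $\pi|_{U(V)(\AQ)}$ is a finite multiplicity-free (or at least completely reducible) direct sum $\bigoplus_{i} \pi_{0,i}$ of admissible irreducibles. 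Moreover, for any $g\in G(V)(\AQ)$, the conjugate $\pi_{0,i}^{g}$ is again a constituent of $\pi|_{U}$, and because $G(V)(\AQ)$ acts transitively on the set of constituents (the direct sum of a single $G$-orbit being $G$-stable inside the irreducible $\pi$), the $\pi_{0,i}$ all lie in a common $\mathcal{L}$-equivalence class.

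Next, I would define the map $\Phi$ from $\mathcal{E}$-classes to $\mathcal{L}$-classes by sending the class of $\pi$ to the common $\mathcal{L}$-class of the irreducible constituents of $\pi|_{U}$. Well-definedness is immediate: twisting $\pi$ by a character $\chi$ of $U(V)(\AQ)\backslash G(V)(\AQ)$ does not alter the restriction to $U(V)(\AQ)$. For injectivity, suppose $\pi_{1}|_{U}$ and $\pi_{2}|_{U}$ share a constituent (after replacing $\pi_{2}$ by $\pi_{2}^{g}\cong\pi_{2}$); then $\Hom_{U(V)(\AQ)}(\pi_{1},\pi_{2})\neq 0$, and by Frobenius reciprocity applied to the abelian quotient, $\pi_{1}$ and $\pi_{2}$ differ by the pullback of a character of $G(V)(\AQ)/U(V)(\AQ)$. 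For surjectivity, I would start with an admissible irreducible $\pi_{0}$ of $U(V)(\AQ)$, form the stabilizer $H\subset G(V)(\AQ)$ of the isomorphism class of $\pi_{0}$, extend $\pi_{0}$ to a representation of $H$ using the vanishing of the obstruction in $H^{2}$ (a standard consequence of the fact that the quotient is a quotient of a torus), and then induce to $G(V)(\AQ)$ to produce a $\pi$ that restricts to a multiple of $\pi_{0}$ along its $G$-orbit.

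For the cuspidal variant, I would observe that the proper parabolics of $G(V)$ are precisely the preimages under $\nu$ of $\Gm{\Q}$ of the proper parabolics of $U(V)$, with identical unipotent radicals. Consequently the constant term of an automorphic form for $G(V)$ along a parabolic $GP$ agrees with the constant term of its restriction along $P=GP\cap U(V)$, so cuspidality transfers in both directions. Twisting by a character of $G(V)(\AQ)/U(V)(\AQ)$ factors through $\nu$ and hence is trivial on all unipotent radicals, preserving cuspidality on the similitude side. These two observations together show that $\Phi$ restricts to a bijection between the $\mathcal{E}$-classes of cuspidal automorphic representations of $G(V)(\AQ)$ and the $\mathcal{L}$-classes of cuspidal automorphic representations of $U(V)(\AQ)$.

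The main obstacle I anticipate is the surjectivity step, where one must produce, given a \emph{global automorphic} $\pi_{0}$ on $U(V)(\AQ)$, an extension that is not merely abstractly an irreducible representation of $G(V)(\AQ)$ but is realized automorphically; this requires threading the construction through the automorphic realization, for instance by decomposing the $G(V)(\AQ)$-subrepresentation of the automorphic spectrum generated by a chosen realization of $\pi_{0}$ and invoking the abelian nature of $G(V)/U(V)$ to ensure the extension lives on the automorphic side. This is precisely the content of the analysis of Labesse--Schwermer, to which one would appeal for the automorphic compatibility.
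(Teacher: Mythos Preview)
Your argument is correct and, for the abstract bijection, essentially the same as the paper's: both invoke the Clifford--Mackey analysis carried out by Labesse--Schwermer for $\operatorname{SL}_n\subset\operatorname{GL}_n$ and transported to the unitary setting. Where you differ is in the cuspidal statement. You argue via parabolics and constant terms to show cuspidality is preserved under restriction and twisting, and then flag the automorphic surjectivity (producing a cuspidal $\pi$ on $G(V)$ from a given cuspidal $\pi_0$ on $U(V)$) as the residual difficulty, to be resolved by appeal to Labesse--Schwermer. The paper instead follows Clozel and writes down a single structural identification: with $S$ the split center of $G$, $M=S\cap U$, and $G^{1}=G(\Q)S(\AQ)U(\AQ)$, one has
\[
L_0^2\bigl(G(\Q)\backslash G(\AQ),\omega\bigr)\;\cong\;\Ind_{G^{1}}^{G(\AQ)}\,L_0^2\bigl(U(\Q)\backslash U(\AQ),\omega_0\bigr),
\]
which simultaneously proves that restriction carries cuspidal to cuspidal and that every cuspidal $\pi_0$ arises from a cuspidal $\pi$, since the right-hand side is manifestly built from the cuspidal spectrum of $U$. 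This induction description is exactly the missing ingredient you anticipate; it replaces your parabolic argument and resolves the surjectivity obstacle in one stroke, at the cost of introducing the intermediate group $G^{1}$ and an extension of the central character from $M$ to $S$.
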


\begin{proof}
The proof is the similar as in Lemma $3.3$ and Proposition $3.5$ of \cite{labesse-schwermer} for the special linear group. More details for unitary groups can be found in section $5$ of \cite{clozelIHES}. We sketch the idea there for the last statement.

We write $S$ for the maximal split central torus of $G$. It is isomorphic to $\mathbb{G}_{m}$. Its intersection with $U$ is then isomorphic to $\mu_{2}\subset \mathbb{G}_{m}$. As in \cite{clozelIHES}, we denote this intersection by $M$.

Let $\omega$ be a Hecke character of $S(\Q)\backslash S(\AQ)$. We write $\omega_{0}$ for its restriction to $M(\Q)\backslash M(\AQ)$. The space of cuspidal forms $L_{0}^{2}(U(\Q)\backslash U(\AQ),\omega_{0})$ is endowed with an action of 
\[G^{1}:=G(\Q)S(\AQ)U(\AQ)\]
 where $G(\Q)$ acts by conjugation, $S(\AQ)$ acts via $\omega$ and $U(\AQ)$ acts by right translation. We know $G^{1}$ is a closed subgroup of $G(\AQ)$ and the quotient $G(\AQ)/G^{1}$ is compact. The representation of $G(\AQ)$ given by right translation on the cuspidal spectrum $L_{0}^{2}(G(\Q)\backslash G(\AQ),\omega)$ is nothing but \[Ind_{G^{1}}^{G(\AQ)}L_{0}^{2}(U(\Q)\backslash U(\AQ),\omega_{0}).\]
 
 \end{proof}
 
 \begin{rem}
 Let $\pi$ be a cuspidal representation of $GU(\AQ)$. Each constituent in the restriction of $\pi$ to $U(\AQ)$ has the same unramified components. In particular, they all have the same partial $L$-function.
 \end{rem}

\begin{lemma}\label{extend to similitude group}
Let $\pi_{0}$ be an algebraic cuspidal automorphic representation of $U(\AQ)$. We can always extend it to an algebraic cuspidal automorphic representation of $G(\AQ)$. 

Moreover, if $\pi_{0}$ is tempered at some place, discrete series at some place, or cohomological, then its extension has the same property.\end{lemma}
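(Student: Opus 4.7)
The plan is to construct $\pi$ as an irreducible constituent of an induced representation from the subgroup $G^{1}=G(\Q)S(\AQ)U(\AQ)$ of $G(\AQ)$ appearing in the proof of the previous lemma, where $S\cong\Gm{\Q}$ is the maximal split central torus of $G$ and $M:=S\cap U\cong\mu_{2,\Q}$, and then to verify the listed properties locally.

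The first step is to extend the central character of $\pi_{0}$. Let $\omega_{0}$ denote the central character of $\pi_{0}$, and $\omega_{0}|_{M}$ its restriction to $M(\AQ)$. I want to pick an algebraic Hecke character $\omega$ of $S(\Q)\backslash S(\AQ)=\Q^{\times}\backslash\AQ^{\times}$ that restricts to $\omega_{0}|_{M}$. Since $\pi_{0}$ is algebraic, $\omega_{0}|_{M}$ is a finite-order character of $\mu_{2}(\AQ)$, which extends to a finite-order Hecke character of $\AQ^{\times}/\Q^{\times}$; this character can then be twisted by any integral power of $|\cdot|_{\AQ}$, producing the desired algebraic $\omega$, whose integer weight I choose so that the combined infinity data at infinity is compatible with the weight of $\pi_{0}$.

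The second step is the induction. With $\omega$ fixed, the space $L^{2}_{0}(U(\Q)\backslash U(\AQ),\omega_{0}|_{M})$ carries the $G^{1}$-action described in the proof of the previous lemma: $U(\AQ)$ by right translation, $S(\AQ)$ via $\omega$, and $G(\Q)$ by conjugation. The induced representation $\Ind_{G^{1}}^{G(\AQ)}$ of the resulting $G^{1}$-module containing $\pi_{0}$ lies in the cuspidal spectrum of $G(\AQ)$, since $G(\AQ)/G^{1}$ is compact and the induction is consequently a finite sum. Any irreducible constituent $\pi$ whose restriction to $U(\AQ)$ contains $\pi_{0}$ is the sought cuspidal automorphic representation of $G(\AQ)$.

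The third step is the preservation of properties. By the previous lemma, $\pi_{v}|_{U(\Q_{v})}$ is a finite direct sum of $\mathcal{L}$-equivalent copies of $\pi_{0,v}$ at every place $v$, so temperedness at $v$ and the discrete series condition at $v$ transfer from $\pi_{0,v}$ to $\pi_{v}$. Algebraicity and cohomologicality are determined by the infinitesimal character of $\pi_{\infty}$, which is fixed by that of $\pi_{0,\infty}$ together with the derivative of $\omega_{\infty}$; both are algebraic by construction, so $\pi$ is algebraic. If $\pi_{0}$ is cohomological with respect to an algebraic representation $W_{0}$ of $U_{\R}$, then $W_{0}$ extends to an algebraic representation $W$ of $G_{\R}$ whose restriction to $S_{\R}$ matches $\omega_{\infty}^{-1}$ (possible because $S/M\cong\Gm{\Q}$ admits arbitrary algebraic characters), and $\pi$ is cohomological with respect to $W$. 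The main obstacle is the first step, but it reduces to the existence of an algebraic Hecke character of $\Q$ with a prescribed finite-order restriction to $\mu_{2}(\AQ)$ and a chosen integral weight, which is elementary.
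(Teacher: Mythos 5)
Your proof is correct and follows essentially the same route as the paper: reduce to extending the central character from $M\cong\mu_{2}$ to an algebraic Hecke character of $S\cong\Gm{\Q}$ (the paper makes the same Pontryagin-duality extension explicit, then twists by a real power of $|\cdot|$ to land in the algebraic category), and then use the $G^{1}$-induction from the preceding lemma. Minor cosmetic differences: for temperedness and the discrete-series property the paper invokes that $M(\Q_{v})\backslash U(\Q_{v})$ has finite index in $S(\Q_{v})\backslash G(\Q_{v})$, which is the same fact you use phrased via the finite restriction decomposition, and for cohomologicality the paper simply cites (5.18) of \cite{clozelIHES} rather than spelling out the extension of $W_{0}$ to $G_{\R}$; also, your remark ``since $\pi_{0}$ is algebraic'' is unnecessary for concluding that $\omega_{0}|_{M}$ has finite order, as any character of $\mu_{2}(\AQ)$ is automatically valued in $\{\pm1\}$.
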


\begin{proof}
To show the existence of the extension, we only need to extend the central character of $\pi_{0}$ to an algebraic Hecke character of $S(\Q)\backslash S(\AQ)$ by the above lemma.

In fact, since $M(\Q)\backslash M(\AQ)$ is compact, the central character of $\pi_{0}$ is always unitary. Hence it lives in the Pontryagin dual of $S(\Q)\backslash S(\AQ)$. We know the Pontryagin dual is an exact functor. Therefore, we can extend it to a unitary Hecke character of $M(\Q)\backslash M(\AQ)$. This unitary Hecke character is not necessarily algebraic. Twisting by a real power of the absolute value, we can get an algebraic Hecke character of $S(\Q)\backslash S(\AQ)$, which is still an extension of the central character of $\pi_{0}$.

To show the extension is locally tempered or discrete series if $\pi_{0}$ is, it is enough to notice that for any place $v$ of $\Q$, $M(\Q_{v})\backslash U(\Q_{v})$ is a finite index subgroup of $S(\Q_{v})\backslash G(\Q_{v})$.

For the cohomological property, we refer to $(5.18)$ of \cite{clozelIHES}.
\end{proof}

\subsection{Base change for unitary groups}
Recall $U$ is the restriction to $\Q$ of the unitary group over $\tr$ associated to $V$. We denote the latter by $U_{0}$. Let $\pi_{0}$ be a cuspidal automorphic representation of $U(\AQ)=U_{0}(\Atr)$. Since $U_{0}(V)(\Acm)\cong GL_{n}(\Acm)$. By Langlands functoriality, we expect to associate $\pi_{0}$ with a $GL_{n}(\Acm)$-representation with expected local components. 

More precisely, we can describe the unramified representations at local non-archimedean places by the Satake parameters. We refer to \cite{minguez} for more details. The local base change can be defined explicitly in terms of the Satake parameters. Let $l/k$ be an extension of local non-archimedean fields and $H$ be a connected reductive group over $k$. The unramified local base change is a map from the set of isomorphism classes of unramified representations of $H(k)$ to that of $H(l)$.

In the global settings, let $L/K$ be an extension of global field and $H$ be a connected reductive group over $K$. We say that an automorphic representation of $H(\mathbb{A}_{L})$ is a weak base change of an automorphic representation of $H(\mathbb{A}_{L})$ if it is the local unramified base change at almost every finite unramified places. If we have multiplicity one theorem for $H(\mathbb{A}_{L})$, for example if $H$ is the unitary group that we will discuss in the following, then the weak base change is unique up to isomorphisms. 

The base change for unitary groups is almost completely clear thanks to Kaletha-Minguez-Shin-White (\cite{kmsw}) and their subsequent articles. But we don't find a precise statement in their paper for our purpose. We use the results and arguments in \cite{labesse}. The following proposition is a slight variation of Th\'{e}or\`{e}me $5.4$ of \cite{labesse}.

\begin{prop}\label{descending}
Let $\Pi$ be a cohomological, conjugate self-dual cuspidal representation of $GL_{n}(\Acm)$. Then $\Pi$ is a weak base change of $\pi_{0}$, a cohomological discrete series representation of $U_{0}(\Atr)$ such that the infinitesimal character of $\Pi_{\infty}$ is compatible with the infinitesimal character of $\pi_{\infty}$ by base change.

We know $\pi_{0}$ is also cuspidal. Moreover, if $\Pi$ has regular highest weight, then so is $\pi_{0}$. In this case, $\pi_{0,\infty}$ is a discrete series representation. 
\end{prop}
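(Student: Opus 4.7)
The plan is to deduce this from the endoscopic classification for unitary groups, following Labesse's approach in \cite{labesse}. First, I invoke Th\'eor\`eme 5.4 of \cite{labesse} (or equivalently, the more recent results of Kaletha-Minguez-Shin-White \cite{kmsw}) to produce, from the conjugate self-dual cuspidal cohomological representation $\Pi$, a discrete automorphic representation $\pi_{0}$ of $U_{0}(\Atr)$ whose weak base change to $GL_{n}(\Acm)$ is $\Pi$. The key input is that a conjugate self-dual cuspidal cohomological $\Pi$ corresponds to a discrete $L$-parameter of simple type, so it is in the image of the (stable) base change map from the quasi-split unitary group, and moreover the archimedean $L$-parameter of $\Pi$ can be chosen to factor through a discrete parameter of $U_{0,\infty}$ matching the given signatures, which ensures the descent can be arranged on $U_{0}$ rather than on another inner form.

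Next, cohomologicality of $\pi_{0}$ follows from compatibility of infinitesimal characters under local base change. Since $\Pi_{\infty}$ has infinitesimal character matching that of an algebraic representation of $R_{\cm/\Q}GL_{n}$, the local base change relation forces $\pi_{0,\infty}$ to have infinitesimal character matching that of an algebraic representation of $U_{0}$, so $\pi_{0,\infty}$ is cohomological (after possibly replacing it by an appropriate member of its $L$-packet, which does not affect the base change).

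For cuspidality of $\pi_{0}$: any discrete automorphic representation of $U_{0}$ that is not cuspidal is residual, arising from a proper Levi via Eisenstein theory. Its weak base change then factors through a corresponding residual representation of $GL_{n}(\Acm)$ and in particular fails to be cuspidal, contradicting the cuspidality of $\Pi$. Hence $\pi_{0}$ is cuspidal.

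Finally, when $\Pi$ has regular highest weight, its infinitesimal character at infinity is regular, so the archimedean $L$-packet of $U_{0,\infty}$ sitting above it consists entirely of discrete series representations (the non-tempered cohomological representations of $U_{0,\infty}$ necessarily have singular infinitesimal characters). Thus $\pi_{0,\infty}$ is a discrete series. The principal obstacle in this plan is the first step: ensuring that the descent lands on the specific inner form $U_{0}$ with the prescribed signatures rather than on some other unitary group in the same global $L$-packet. This amounts to a global sign/character identity from the endoscopic classification, which for cohomological conjugate self-dual representations is consistent with any prescribed signature (this is why the hypothesis in Theorem 2 above allows $I$ to be arbitrary), and it is the reason the proof requires the full strength of \cite{kmsw} rather than just the trace formula arguments of \cite{labesse}.
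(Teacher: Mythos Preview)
Your overall strategy matches the paper's: both invoke Labesse \cite{labesse} for the existence of the descent $\pi_{0}$ and the compatibility of infinitesimal characters, and both deduce the discrete-series conclusion at infinity from regularity of the highest weight (the paper cites \cite{lischwermer} for this last step).

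The substantive difference is in the cuspidality argument. You argue by contradiction: a non-cuspidal discrete $\pi_{0}$ would be residual, and its base change would then be non-cuspidal in $GL_{n}(\Acm)$. This is correct in spirit, but making it precise requires knowing enough of the endoscopic classification (essentially the full Arthur--Mok/KMSW description of the discrete spectrum) to control what the base change of a residual representation looks like; with only Labesse's results this step is not immediate. The paper takes a more elementary route: pick a split place $v$ of $\tr$ with $w\mid v$, so that $U_{0}(\tr_{v})\cong GL_{n}(\cm_{w})$ and $\pi_{0,v}\cong\Pi_{w}$. Since $\Pi$ is cohomological conjugate self-dual cuspidal, the Ramanujan conjecture is known for it (Clozel \cite{clozelramanujan}, Caraiani \cite{caraianiramanujan}), so $\Pi_{w}$ and hence $\pi_{0,v}$ is tempered; then Wallach's theorem \cite{Wallach} (as extended by Clozel \cite{clozel93}) forces $\pi_{0}$ to be cuspidal. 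This avoids the global endoscopic input you need and is worth knowing as a standard trick.

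Your closing discussion of the ``principal obstacle'' (landing on the prescribed inner form $U_{0}$) is a fair point that the paper simply absorbs into the citation of \cite{labesse}, noting only that the extra hypotheses there are needed for multiplicity one and not for existence.
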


\begin{proof}
The existence of $\pi_{0}$ is proved in \cite{labesse}. There are two additional assumptions in the beginning of section $5.2$ of \textit{loc. cit. }, but they are only used for showing multiplicity one in \textit{loc. cit}.

The compatibility of infinitesimal characters is also proved in \textit{loc. cit.} by the calculation on the transfer of Lefschetz function.

We now show that $\pi_{0}$ is cuspidal. Let $v$ be a split place of $F^{+}$ and $w$ be a place of $F$ above $v$ such that $\Pi_{w}$ is the local unramified base change of $\pi_{0,v}$. In particular, we have $U(F^{+}_{v})\cong U(F_{w})\cong GL_{n}(F_{w})$. We know $\Pi_{w}$ is tempered by the Ramanujan conjecture proved in this case by Clozel (\cite{clozelramanujan}) and also by Cariani (\cite{caraianiramanujan}). Hence $\pi_{0,v}$ is tempered since it it isomorphic to $\Pi_{w}$ if we identify $U(F^{+}_{v})$ with $GL_{n}(F_{w})$. The cuspidality then follows from a theorem of Wallach (c.f. \cite{Wallach}) generalized by Clozel (c.f. \cite{clozel93}).

Finally, it is clear that if the highest weight of $\Pi$ is regular then so is $\pi_{0}$. We know a cohomological representation of regular weight is discrete series at infinity by Prop. 4.2 and 5.2 of \cite{lischwermer}.
\end{proof}




  \subsection{Special values of representations of general linear group}\label{GLn}
 Let $\Pi$ be a cohomological conjugate self-dual cuspidal representation of $GL_{n}(\Acm)$.
  
For each $\tau\in \Phi$, let $s_{\tau}$ be an integer in $\{0,1,\cdots,n\}$. We write $I:=(s_{\tau})_{\tau\in\Phi}$ be an element in $\{0,1,\cdots,n\}^{\Phi}$. Let $V_{I}$ be a Hermitian space with respect to $\cm/\tr$ of signature $(n-s_{\tau},s_{\tau})$. We write $U_{0,I}$ for the associated unitary group over $\tr$ and $GU_{I}$ for the associated rational similitude unitary group.

We assume that $\Pi^{\vee}$, the contragredient of $\Pi$, descends by base change to a packet of representations of $U_{0,I}(\Atr)$, which contains a representation $\pi_{0,I}$ satisfying Hypothesis \ref{hypotheses}. 

By Lemma \ref{extend to similitude group}, we can extend $\pi_{0,I}$ to $\pi_{I}$, a cuspidal representation of $GU_{I}(\AQ)$, which still satisfies Hypothesis \ref{hypotheses}.

\begin{rem}
 By Proposition \ref{descending}, we know if $\Pi$ is cohomological with respect to a regular highest weight then it descends by base change to a cuspidal representation of $U_{0,I}(\Atr)$ which is cohomological with respect to a regular highest weight. In particular, this representation satisfies Hypothesis \ref{hypotheses}.
 \end{rem}


\begin{dfn}\label{autoperiods}
Let $\Pi$ be as before. Let $I=(s_{\sigma})_{\sigma\in\Sigma}\in\{0,1,\cdots,n\}^{\Sigma}$. We keep the above notation and define the \textbf{automorphic arithmetic period} $P^{(I)}(\pi)$ by $(2\pi i)^{-2a_{0}}Q_{V_{I}}(\pi)^{-1}$.
\end{dfn}



The following theorem can be deduced directly from Corollary \ref{special value unitary group}.

 \begin{thm}\label{n*1}
 Let $\Pi$ be as before. We denote the infinity type of $\Pi$ at $\tau\in \Phi$ by $(z^{A_{\tau,i}}\overline{z}^{-A_{\tau,i}})_{1\leq i\leq n}$ with $A_{\tau,i}$ in decreasing order for all $\tau\in\Phi$.

 Let $\eta$ be an algebraic Hecke character of $F$. We assume that $\eta^{c}$ can be written in the form $\widetilde{\phi}\alpha$ where $\phi$ is an algebraic Hecke character of $\cm$ of infinity type $z^{-m_{\tau}}\overline{z}^{-m_{\overline{\tau}}}$, $\widetilde{\psi}:=\psi/\psi^{c}$ and $\alpha$ is an algebraic Hecke character of $\cm$ of the infinity type $z^{\kappa}$ and $\tau\in \Phi$.
 
 We suppose that $2m_{\tau}-2m_{\overline{\tau}}-\kappa+2A_{\tau,i}\neq 0$ for all $1\leq i\leq n$ and $\tau\in \Phi$. We define $I:=I(\Pi,\eta)$ to be the map on $\Phi$ which sends $\tau\in\Phi$ to $I(\tau):=\#\{i:2m_{\tau}-2m_{\overline{\tau}}-\kappa+2A_{\tau,i}<0\}$. As before, we write $P^{*,(I(\Pi,\eta))}(\Pi)$ for $(P^{(I(\text{}^{\rho}\Pi,\text{}^{\rho}\eta))}(\text{}^{\rho}\Pi))_{\rho\in J_{E(\Pi,\eta)}} \in E(\Pi,\eta)\otimes \C$.  
 
Let $m\in \Z$ such that  $m\geq n-\cfrac{\kappa}{2}$ and satisfies equation (\ref{mainineq}) with $s_{\tau}=I(\tau)$ and $r_{\tau}=n-I(\tau)$,
then we have:

\begin{eqnarray}
&L^{*}(m-\frac{n}{2},\Pi\otimes \eta) \sim_{E(\Pi,\eta);F^{\Gal}}& \\ \nonumber
 &(2 \pi i)^{(m-n/2)nd(\tr)} \Icm^{[n/2]} (D_{\tr}^{1/2})^{n}e_{\Phi}^{mn}P^{*,(I(\Pi,\eta))}(\Pi) \prod\limits_{\tau\in\Phi}p^{*}(\check{\eta},\tau)^{I(\tau)}p^{*}(\check{\eta},\overline{\tau})^{n-I(\tau)}&
\end{eqnarray}
where $d(\tr)$ is the degree of $\tr$ over $\Q$.

 \end{thm}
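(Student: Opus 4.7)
The plan is to reduce Theorem \ref{n*1} to Corollary \ref{special value unitary group} by descending $\Pi^{\vee}$ to the similitude unitary group $GU_{I}$ of signature $(n-I(\tau), I(\tau))_{\tau\in\Phi}$ and then translating the special value formula back to the $GL_{n}$ side. The hypothesis provides a cohomological cuspidal representation $\pi_{0,I}$ of $U_{0,I}(\Atr)$ whose base change is $\Pi^{\vee}$, and Lemma \ref{extend to similitude group} extends it to a cuspidal $\pi_{I}$ on $GU_{I}(\AQ)$ satisfying Hypothesis \ref{hypotheses}. The decomposition $\eta^{c}=\widetilde{\phi}\alpha$ singles out the auxiliary characters: set $\psi=\phi$, whose infinity type is $(m_{\tau})_{\tau\in J_{\cm}}$, and keep $\alpha$ with infinity type $\kappa$ at places of $\Phi$ and $0$ outside. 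This is exactly the input to Corollary \ref{special value unitary group}.

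Next I would verify that $I(\Pi,\eta)$ is forced by inequality (\ref{mainineq}). The compatibility of infinitesimal characters under base change (Proposition \ref{descending}) expresses the weight components $a_{\tau,i}$ of $\pi_{I}$ as an explicit affine function of $A_{\tau,i}$, $m_{\tau}$, $m_{\bar\tau}$ and $\kappa$. Substituting this relation into (\ref{mainineq}) and inspecting when the bounds are compatible with $m\geq n-\kappa/2$ shows that the unique split point $s_{\tau}$ for which valid $m$ exist is $\#\{i:2m_{\tau}-2m_{\bar\tau}-\kappa+2A_{\tau,i}<0\}=I(\tau)$. This also pins down the signature that the hypothesis on $\Pi$ must have been given for.

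Now I apply Corollary \ref{special value unitary group} to $(\pi_{I},\psi,\alpha)$. On the $L$-function side, at unramified places base change identifies $L^{S}(s,\pi_{I}\otimes\psi,\St,\alpha)$ with $L^{S}(s,\Pi^{\vee}\otimes\widetilde{\phi}\alpha,\St)$, and the identity $\eta^{c}=\widetilde{\phi}\alpha$ together with the functional equation converts this (up to a shift between motivic and arithmetic normalizations) into $L^{S}(s-n/2,\Pi\otimes\eta)$; finite-set factors are absorbed by $\sim_{E(\Pi,\eta);F^{\Gal}}$. On the period side, Definition \ref{autoperiods} rewrites $(2\pi i)^{-2a_{0}}Q^{*}(\pi_{I})^{-1}$ as $P^{*,(I(\Pi,\eta))}(\Pi)$, and the CM period factor
\[ P^{*}(\psi,\alpha)=p^{*}(\psi;\det\circ x)\,p^{*}(\psi^{-1}\alpha^{-1};\det\circ\bar{x}) \]
must be expanded in terms of individual embeddings. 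Since the Hodge types of $\psi$ and $\psi^{-1}\alpha^{-1}$ translate through $\eta^{c}=\widetilde{\phi}\alpha$ to those of $\check{\eta}$ at $\tau$ and $\bar\tau$ respectively, and the determinant pulls back the line bundle with multiplicities $r_{\tau}=n-I(\tau)$ at $\tau$ and $s_{\tau}=I(\tau)$ at $\bar\tau$, the product becomes $\prod_{\tau\in\Phi}p^{*}(\check{\eta},\tau)^{I(\tau)}p^{*}(\check{\eta},\bar\tau)^{n-I(\tau)}$.

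The main obstacle is precisely this last identification of CM periods: the map $\det\colon GU_{I}\to T^{\cm}$ distributes $\psi$ across the embeddings non-uniformly according to the signature, and one must check that the resulting distribution of $p^{*}(\check\eta,\tau)$-factors has exponent $I(\tau)$ at $\tau$ and $n-I(\tau)$ at $\bar\tau$ rather than the reverse. Once this matching is carried out, collecting the powers of $(2\pi i)$ from both the motivic--arithmetic shift and the factor $(2\pi i)^{[\tr:\Q](mn-n(n-1)/2)-2a_{0}}$ in Corollary \ref{special value unitary group}, and folding in $\Icm^{\lfloor n/2\rfloor}$, $(D_{\tr}^{1/2})^{n}$ and $e_{\Phi}^{mn}$ unchanged, yields the stated formula.
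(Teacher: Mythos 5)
Your overall route matches the paper's: descend to $\pi_I$ on $GU_I$, apply Corollary~\ref{special value unitary group} with $\psi=\phi$, translate the $L$-function via base change and the identity $\eta^c=\widetilde{\phi}\alpha$, and match the CM-period factors. Two points need attention.

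First, your invocation of "the functional equation" is not what is happening. The conversion from $L^S\bigl(m-\tfrac{n}{2},BC(\pi_I|_U)\otimes\widetilde{\phi}\alpha\bigr)$ to $L^S\bigl(m-\tfrac{n}{2},\Pi\otimes\eta\bigr)$ uses only that $BC(\pi_I|_U)=\Pi^\vee=\Pi^c$ (conjugate self-duality), $\widetilde{\phi}\alpha=\eta^c$, and the elementary symmetry $L(s,\Pi^c\otimes\eta^c)=L(s,\Pi\otimes\eta)$. No $s\leftrightarrow 1-s$ functional equation enters.

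Second, and more seriously, you have not addressed why the \emph{starred} (Galois-equivariant) version holds. Corollary~\ref{special value unitary group} gives an identity in $E(\pi_I,\psi,\alpha)\otimes\C$ whose $\rho$-components involve $L$-functions and periods of ${}^\rho\pi_I$, while Theorem~\ref{n*1} asserts an identity in $E(\Pi,\eta)\otimes\C$ whose $\rho$-components involve ${}^\rho\Pi$, ${}^\rho\eta$ and the automorphic period $P^{(I({}^\rho\Pi,{}^\rho\eta))}({}^\rho\Pi)$ attached to a possibly different signature. To pass from one to the other you must know that Galois conjugation commutes with base change, so that ${}^\rho\pi_I$ descends ${}^\rho\Pi$ with the correct signature, matching up the $\rho$-components on both sides. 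The paper proves exactly this as Theorem~\ref{commutativity base change} in the appendix and cites it explicitly at this point. Without it your argument only gives a single-embedding statement, not the $\sim_{E(\Pi,\eta);F^{\Gal}}$ equivalence claimed. (Your remark that the choice of $I(\Pi,\eta)$ is forced by~\eqref{mainineq} is a useful observation but is taken as part of the setup in the paper rather than something to be reproved.)
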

 
 \begin{rem}
 \begin{enumerate}
 \item The infinity type stated in the theorem is different from the infinity type in subsection \ref{subsection: algebraic Hecke}. When we say previously that $\psi$, an algebraic Hecke character of $\cm$, is of infinity type $(m_{\tau})_{\tau\in J_{\cm}}$, we mean that $\psi$ is of infinity type $z^{-m_{\tau}}\overline{z}^{-m_{\overline{\tau}}}$ at $\tau\in J_{\cm}$ here. 
 
 \item This theorem is first stated as Theorem $5.2.1$ in \cite{linthesis}. It was proved by assuming a conjecture (c.f. Conjecture $5.1.1$ of \textit{loc. cit.}) which is nothing but a variation of our Theorem \ref{special value unitary group}. 
 
 \item For any algebraic Hecke character $\eta$ of $\cm$, we know $a(\tau)+b(\tau)$ is an integer independent of $\tau$, denoted by $-\omega(\eta)$. It is easy to show that if we allow to change the CM type $\Phi$ then $\eta^{c}$ is of the form $\tilde{\phi}\alpha$ as above if and only if $\omega(\eta)$ is odd, or $\omega(\eta)$ is even and the integers $a(\tau)$ has the same parity for different $\tau$.

 \end{enumerate}
 \end{rem}
 
 \begin{proof}
 Let $I$ be as in the statement of the proof and $\pi_{I}$ be as before.
 We have that (c.f. $3.5.1$ of \cite{harriscrelle})
 \begin{eqnarray}\nonumber
  L^{S,mot}\left(m,\pi_{I}\otimes\psi,\St, \alpha \right) &=& L^{S}\left(m-\frac{n}{2},\pi_{I}\otimes\psi,\St, \alpha \right)\\&=&L^{S}\left(m-\frac{n}{2},BC(\pi_{I}\mid_{U})\otimes\widetilde{\psi}\alpha\right)\nonumber\\
  &=&L^{S}\left(m-\frac{n}{2}, \Pi^{c}\otimes\eta^{c}\right)\nonumber
  \\&=&L^{S}\left(m-\frac{n}{2}, \Pi\otimes\eta\right).
 \end{eqnarray}
 We compare this with Definition \ref{autoperiods} and corollary \ref{special value unitary group}. We conclude by the fact that the term $P^{*}(\phi,\alpha)$ in corollary \ref{special value unitary group} is equivalent to \[\prod\limits_{\tau\in\Phi}p^{*}(\check{\eta},\tau)^{I(\tau)}p^{*}(\check{\eta},\overline{\tau})^{n-I(\tau)}\] by the same calculation as in page $138$ of \cite{harriscrelle}.
 
 To get the Galois equivariant version, we only need to notice that the base change map is commutative with the Galois conjugation as proved in Theorem \ref{commutativity base change}. 
 
\end{proof}

\section{Motivic interpretation}\label{section motivic}
\subsection{The Deligne conjecture}
We firstly recall the statement of the general Deligne conjecture. For details, we refer the reader to Deligne's original paper \cite{deligne79}. We adapt the notation in \cite{harrislin}.

Let $\mathcal{M}$ be a motive over $\Q$ with coefficients in a number field $E$, pure of weight $w$. For simplicity, we assume that if $w$ is even then $(w/2,w/2)$ is not a Hodge type of $\mathcal{M}$. In this case, the motive is critical in the sense of \cite{deligne79}. Deligne has defined two elements $c^{+}(\mathcal{M})$ and $c^{-}(\mathcal{M})\in (E\otimes \C)^{\times}$ as determinants of certain period matrices.

For each $\rho\in J_{E}$, we may define the $L$-function $L(s,\mathcal{M},\rho)$. We write $L(s,\mathcal{M})=L(s,\mathcal{M},\rho)_{\rho\in J_{E}}$. If $L(s,\mathcal{M},\rho)$ is holomorphic at $s=s_{0}$ for all $\rho\in J_{E}$, we may consider $L(s_{0},\mathcal{M})$ as an element in $E\otimes\C$.

\begin{dfn}
We say an integer $m$ is \textbf{critical} for $\mathcal{M}$ if neither $L_{\infty}(\mathcal{M},s)$ nor $L_{\infty}(\check{\mathcal{M}},1-s)$ has a pole at $s=m$ where $\check{\mathcal{M}}$ is the dual of $\mathcal{M}$. We call $m$ a \textbf{critical point} for $\mathcal{M}$.
\end{dfn}

Deligne has formulated a conjecture (c.f. \cite{deligne79}) on special values of motivic $L$-functions as follows.

\begin{conj}(\textbf{the Deligne conjecture})
Let $m$ be a critical point for $\mathcal{M}$. We write $\epsilon$ for the sign of $(-1)^{m}$. We then have: 
\begin{equation}
L(m,\mathcal{M}) \sim_{E} (2\pi i)^{mn^{\epsilon}}c^{\epsilon}(\mathcal{M})
\end{equation}
where $n^{+}$ and $n^{-}$ are two integers depending only on $\mathcal{M}$.
\end{conj}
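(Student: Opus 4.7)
The statement here is \emph{Deligne's conjecture} in its full generality, which remains a major open problem; I cannot offer a proof, and my proposal can only sketch which ingredients would have to be assembled and where the genuine obstructions lie. The conjecture is known in several important classes of motives. For Tate motives it reduces to Euler's evaluation of $\zeta(2k)$ together with the functional equation. For Artin motives, it follows essentially from Siegel's rationality theorem for $L$-values of Artin representations, with the period matrix encoded by Gauss sums (and recovered as the $c^{\pm}$ of $\Res_{\tr/\Q}[\alpha_{0}]$ used throughout the present paper). For motives of algebraic Hecke characters of CM fields, a full proof was given by Blasius (\cite{blasiusannals}), building on Shimura's monomial relations between CM periods and on the Chowla--Selberg formula. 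For $L$-functions of elliptic modular forms the conjecture is Shimura's, and various Rankin--Selberg extensions to $\GL_2\times\GL_2$ and $\GL_2\times\GL_3$ are also established.

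The common strategy in all known cases is threefold: one first realizes the motive $\mathcal{M}$ geometrically, typically as a direct summand of the cohomology of a Shimura variety; one then uses an integral representation of the $L$-function (Mellin, Rankin--Selberg, doubling, etc.) to express $L(m,\mathcal{M})$ as a period of a cuspidal automorphic form against an Eisenstein series or theta series; and finally one matches this automorphic period to Deligne's $c^{\epsilon}(\mathcal{M})$ through the comparison isomorphism between Betti and de Rham realizations, carefully tracking the factor $(2\pi i)^{mn^{\epsilon}}$ predicted by the Hodge filtration. Each of these steps is delicate: step one requires a workable theory of motives attached to the automorphic representation at hand; step two requires unramified, ramified, and archimedean zeta integral computations, each with their own rationality issues; and step three demands an explicit identification of arithmetic holomorphic vectors with classes rational for the de Rham structure, together with Galois equivariance over the full number field of coefficients.

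For the motives that would be of direct interest in the present paper --- namely those conjecturally attached to cohomological conjugate self-dual cuspidal representations of $\GL_n(\Acm)$ (constructed for $n\leq 3$ by Blasius--Rogawski and in greater generality, in a weaker sense, by Clozel, Harris--Taylor and others), possibly tensored with motives of Hecke characters --- the method of this paper, combined with the theory of Shimura varieties for unitary similitude groups, realizes the first two steps unconditionally and identifies an automorphic period, namely the combination $(2\pi i)^{-2a_{0}}Q^{\Pet}(\pi)^{-1}P(\psi;\alpha)$ of Theorem~\ref{main theorem}, that plays the role of $c^{\epsilon}(\mathcal{M})$. The main obstacle --- and, I expect, the hardest part of any honest attempt --- is the third step: one must match these automorphic quadratic and CM periods to Deligne's $c^{\epsilon}(\mathcal{M})$ in an $E(\pi,\psi,\alpha)$-equivariant way. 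Outside of very low rank this requires a fully developed theory of motives for cohomological automorphic representations of unitary groups, together with a comparison of the Hodge filtration on de Rham cohomology with the Hodge filtration on coherent cohomology coming from the BGG complex; such a theory is not currently available in the generality required here. In the absence of it, the honest conclusion of a proof proposal is that a direct proof lies beyond the techniques of this paper, and the best one can do --- and what is carried out in the remainder of this section --- is a precise compatibility check of Theorems~1 and~2 with the shape of Deligne's prediction.
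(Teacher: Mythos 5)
You correctly identify that the statement is labeled as a conjecture and carries no proof in the paper (nor anywhere in full generality); the paper merely records Deligne's prediction as a benchmark and, in the final subsection, checks that Theorem~\ref{n*1} is compatible with it conditionally on the Tate conjecture and the existence of the motives $M(\Pi)$ and $M(\eta)$. Your account of the known cases, the three-step strategy, and the missing ingredient (an $E(\pi,\psi,\alpha)$-equivariant comparison between the automorphic periods and the Hodge--de Rham periods of the conjectural motive) accurately reflects what the paper itself does and does not do.
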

\begin{rem}
We have assumed that if $w$ is even then $(w/2,w/2)$ is not a Hodge type of $\mathcal{M}$. In this case, $dim_{E}\mathcal{M}$ is even and $n^{+}=n^{-}=dim_{E}\mathcal{M}/2$.
\end{rem}

The following lemma can be deduced easily from $(1.3.1)$ of \cite{deligne79} (for the proof, see Lemma $3.1$ of \cite{linCRcomplet}).

\begin{lemma}\label{critical}
Let $\mathcal{M}$ be a pure motive of weight $w$ as before. We assume that if $w$ is even then $(w/2,w/2)$ is not a Hodge type of $\mathcal{M}$. Let $T(\mathcal{M}):=\{p\mid (p,w-p)\text{ is a Hodge type of }\mathcal{M}\}$. An integer $m$ is critical for $\mathcal{M}$ if and only if:
\[\max\{p\in T(M)\mid p<w/2\}<m\leq \min\{p\in T(M)\mid p>w/2\}  .\]

In other words, an integer $m$ is critical for $\mathcal{M}$ if and only if for any $p\in T(M)$ such that $p>w/2$ we have $m\leq p$, and for any $p\in T(M)$ such that $p<w/2$ we have $m>p$.
In particular, critical values always exist in this case.
\end{lemma}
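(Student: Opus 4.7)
My plan is to unwind the definition of critical by writing out the archimedean $L$-factors $L_\infty(\mathcal{M},s)$ and $L_\infty(\check{\mathcal{M}},1-s)$ explicitly as products of $\Gamma_\C$-factors indexed by the Hodge types, and then match the locations of their poles against the two halves of the inequality. Recall that for a Hodge type $(p,q)$ with $p<q$ one has a factor $\Gamma_\C(s-p)=2(2\pi)^{-(s-p)}\Gamma(s-p)$ in $L_\infty(\mathcal{M},s)$, and that the assumption $(w/2,w/2)\notin T(\mathcal{M})$ lets me ignore the more delicate $p=q$ factors that split into $\Gamma_\R$'s with Frobenius eigenspaces.

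First I would compute
\[ L_\infty(\mathcal{M},s)=\prod_{p\in T(\mathcal{M}),\,p<w/2}\Gamma_\C(s-p). \]
Since $\Gamma_\C(s-p)$ has poles exactly at integers $s\leq p$, absence of a pole at $s=m$ (with $m\in\Z$) is equivalent to $m>p$ for every such $p$, i.e.\ $m>\max\{p\in T(\mathcal{M})\mid p<w/2\}$. Next, the Hodge types of $\check{\mathcal{M}}$ are $(-p,-q)$ for $(p,q)$ a Hodge type of $\mathcal{M}$, and $\check{\mathcal{M}}$ is pure of weight $-w$. Pairing up and selecting the factor with first index smaller than second gives
\[ L_\infty(\check{\mathcal{M}},s)=\prod_{p\in T(\mathcal{M}),\,p>w/2}\Gamma_\C(s+p), \]
and substituting $1-s$ shows $L_\infty(\check{\mathcal{M}},1-s)$ has poles at $m\in\Z$ exactly when $m\geq p+1$ for some $p\in T(\mathcal{M})$ with $p>w/2$. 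Thus absence of a pole is equivalent to $m\leq\min\{p\in T(\mathcal{M})\mid p>w/2\}$. Combining the two conditions gives the equivalence claimed in the lemma.

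Finally, for the existence of a critical integer, I would observe that the hypothesis $(w/2,w/2)\notin T(\mathcal{M})$ forces a gap of at least one integer between $\max\{p\in T(\mathcal{M})\mid p<w/2\}$ and $\min\{p\in T(\mathcal{M})\mid p>w/2\}$: when $w$ is odd this is automatic (the max is at most $(w-1)/2$ and the min at least $(w+1)/2$), and when $w$ is even the omission of $p=w/2$ pushes the bounds to at most $w/2-1$ and at least $w/2+1$ respectively. In either case the half-open interval $(\max,\min]$ contains at least one integer.

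The argument is really just a careful bookkeeping exercise, and the main obstacle I anticipate is keeping the conventions straight, in particular the duality $(p,q)\mapsto(-p,-q)$ under $\mathcal{M}\mapsto\check{\mathcal{M}}$ and the flip $s\mapsto 1-s$, which together turn a condition on the left endpoint for $\mathcal{M}$ into a condition on the right endpoint for $\check{\mathcal{M}}$. Once the two $\Gamma_\C$-products are written down correctly, the statement is almost immediate from the pole structure of the Gamma function.
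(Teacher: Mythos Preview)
Your proposal is correct and is precisely the computation the paper has in mind: the paper does not spell out a proof but simply refers to Deligne's formula (1.3.1) in \cite{deligne79} for the archimedean $L$-factor (and to an external reference for the details), and your argument is exactly the unwinding of that formula under the hypothesis that no $(w/2,w/2)$ Hodge type occurs. The only cosmetic point is that the $\Gamma_\C$-factors should carry Hodge-number multiplicities, but since you only use the \emph{location} of the poles this does not affect the argument.
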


\bigskip

It is not easy to relate Deligne's periods to geometric objects directly. In \cite{harrisadjoint} and its generalization in \cite{linthesis}, more motivic periods are defined for motives over a CM field. These motivic periods can be related more easily to geometric objects. The Deligne periods are calculated in terms of these new periods in the above two papers and in \cite{harrislin}. 

\subsection{Deligne conjecture for tensor product of motives}

We give a special example of the results in \cite{harrislin} which fits in our main results. 

Let $M$ (resp. $M'$) be a regular motive over $\cm$ with coefficients in a number field $E$ of rank $n$ (resp. rank $1$) and pure of weight $w$.

We first fix $\rho\in J_{E}$ an embedding of the coefficient field. For each $\tau\in J_{F}$, we write the Hodge type of $M$ at $\tau$ (and $\rho$) as $(p_{i}(\tau),q_{i}(\tau))_{1\leq i\leq n}$ with $p_{1}(\tau)>p_{2}(\tau)>\cdot>p_{n}(\tau)$. We know that $q_{i}(\tau)=w-p_{i}(\tau)$. 

We write the Hodge type of $M'$ at at $\tau$ (and $\rho$) as $(p(\tau),q(\tau))$. We assume that for any $i$ and $\tau$, $2p_{i}(\tau)+p(\tau)-q(\tau)\neq 0$.

Let $I(M,M')$ be the map on $\Phi$ which sends $\tau\in\Phi$ to $\#\{i: 2p_{i}(\tau)+p(\tau)-q(\tau)-w>0\}$.

The motivic periods $Q^{(i)}(M,\tau)$, $0\leq i\leq n$ and $Q^{(j)}(M,\tau)$, $0\leq j\leq 1$ are defined in Definition $3.1$ of \cite{harrislin} as elements in $(E\otimes\C)^{\times}$. 

As usual, we identify $E\otimes \C$ with $\C^{J_{E}}$. We write the $\rho$-component of $Q^{(i)}(M,\tau)$ by $Q^{(i)}(M,\tau)_{\rho}$. We define \[Q^{*,I(M,M')}(M):=(\prod\limits_{\tau\in \Phi} Q^{(I(M,M')(\tau))}(M,\tau)_{\rho})_{\rho\in J_{E}}.\] We remark that the index $I(M,M')$ depends implicitly on the embedding $\rho\in J_{E}$. 

Similarly, we write \[Q^{*,(0)}(M,\tau)^{n-I(M',M)(\tau)}:=(Q^{(0)}(M,\tau)^{n-I(M',M)(\tau)}_{\rho})_{\rho\in J_{E}}\in (E\otimes \C)^{\times}.\]

\begin{prop}\label{Deligne for n*1}
The Deligne's periods for the motive $Res_{\cm/\Q}(M\otimes M')$ satisfy:
\begin{eqnarray}
\nonumber &c^{+}Res_{\cm/\Q}(M\otimes M')\sim_{E} (2\pi i)^{-\frac{|\Phi|n(n-1)}{2}}\Icm^{[n/2]} (D_{\tr}^{1/2})^{n}\times&\\
\nonumber& \prod\limits_{\tau\in\Sigma_{\Phi}}Q^{*,I(M,M')}(M) \prod\limits_{\tau\in \Phi} Q^{*,(0)}(M,\tau)^{n-I(M',M)(\tau)}Q^{*,(1)}(M,\tau)^{I(M',M)(\tau)}.&
\end{eqnarray}
Moreover, we have
\[ c^{-}Res_{\cm/\Q}(M\otimes M')\sim_{E} e_{\Phi}^{n}c^{-}Res_{\cm/\Q}(M\otimes M').\]
\end{prop}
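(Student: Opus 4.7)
The plan is to apply the general machinery of \cite{harrislin} (building on \cite{linthesis} and \cite{harrisadjoint}) which expresses the Deligne periods $c^{\pm}$ of $\Res_{\cm/\Q}(N)$, for $N$ a regular motive over $\cm$, as an explicit product of the motivic periods $Q^{(i)}(\cdot,\tau)$ attached to the Hodge filtration pieces. In our setting $N = M\otimes M'$ with $M$ of rank $n$ and $M'$ of rank one, so the task reduces to specialising that formula and keeping careful track of the Hodge indices.

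First I would record the Hodge data. At each $\tau\in J_{\cm}$ the motive $N = M\otimes M'$ has Hodge types $(p_{i}(\tau)+p(\tau),q_{i}(\tau)+q(\tau))_{1\le i\le n}$, and the hypothesis $2p_{i}(\tau)+p(\tau)-q(\tau)\ne 0$ says precisely that none of these lies on the middle diagonal, so Lemma \ref{critical} applies and both $c^{+}$ and $c^{-}$ are well-defined. The index $I(M,M')(\tau)$ then counts those $i$ for which the Hodge type lies strictly above the middle line, while $I(M',M)(\tau)$ records the complementary count needed for the rank-one factor $M'$.

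Next I would apply Deligne's period formula for a restriction of scalars. This introduces three sources of auxiliary factors, each already familiar from earlier in the paper: a factor $D_{\tr}^{n/2}$ coming from the discriminant of $\tr$, a factor $\Icm^{[n/2]}$ reflecting the classical identity $\delta[\varepsilon_{\cm}]\sim_{\Q} \Icm D_{\tr}^{1/2}$ used in the proof of Lemma \ref{lemma d}, and a power $(2\pi i)^{-|\Phi|n(n-1)/2}$ accounting for the Tate twists at the strictly negative Hodge bidegrees, collected through the arithmetic progression $0,1,\dots,n-1$ on each $\tau\in\Phi$. I would then invoke the factorisation of the resulting sub-determinants of the period matrix in terms of the individual $Q^{(i)}(M,\tau)$ and $Q^{(j)}(M',\tau)$, reading off that the $I(M,M')(\tau)$-th filtration step of $M$ produces $Q^{(I(M,M')(\tau))}(M,\tau)$, while $Q^{(0)}(M',\tau)$ and $Q^{(1)}(M',\tau)$ enter with the complementary exponents $n - I(M',M)(\tau)$ and $I(M',M)(\tau)$ dictated by where the rank-one Hodge line sits relative to the $p_i(\tau)$. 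This gives the announced expression for $c^{+}$. For the $c^{-}$ claim (which I read as comparing $c^{-}$ to $c^{+}$), the same computation applies after replacing the "positive" half of the decomposition by its complex conjugate; the difference is a sign depending on the $\Aut(\C)$-action on $\Phi$, and these signs assemble to $e_{\Phi}^{n}$, the power $n$ coming from one contribution per diagonal entry of the $n\times n$ block.

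The main obstacle is purely combinatorial bookkeeping: one must distinguish $I(M,M')$ from $I(M',M)$ carefully and check factor by factor that the sub-determinants of the period matrix, as organised by the Hodge filtration of $\Res_{\cm/\Q}(M\otimes M')$, reproduce exactly the products of $Q^{(i)}(M,\tau)$ and $Q^{(j)}(M',\tau)$ specified by Definition 3.1 of \cite{harrislin}. Once the bases on the de Rham and Betti sides are chosen compatibly with the CM type $\Phi$ as in \cite{linthesis}, no new analytic input is required and the formula follows.
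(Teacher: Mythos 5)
Your proposal takes essentially the same route as the paper: both invoke the general formula from \cite{harrislin} (Propositions 2.11 and 3.13 there) expressing $c^{\pm}$ of $\Res_{\cm/\Q}(M\otimes M')$ as a product of the motivic periods $Q^{(i)}$, and both reduce the work to combinatorial bookkeeping on the Hodge indices, then handle $c^{-}$ via a sign attached to the $\Aut(\C)$-action on the CM type that assembles to $e_{\Phi}^{n}$. The paper is more explicit about this bookkeeping: it phrases it in terms of the split indices $sp(i,M;M',\tau)$ from Definition 3.2 of \cite{harrislin}, and verifies via the chain of inequalities
\[p_{1}(\tau)-\tfrac{p(\tau)+q(\tau)+w}{2}>\cdots>p_{t}(\tau)-\tfrac{p(\tau)+q(\tau)+w}{2}>-p(\tau)>p_{t+1}(\tau)-\tfrac{p(\tau)+q(\tau)+w}{2}>\cdots\]
that $sp(i,M;M',\tau)$ is $1$ at $i=I(M,M')(\tau)$ and $0$ elsewhere, with the complementary counts $n-I$ and $I$ for the rank-one factor; you describe the same check in looser language ("where the rank-one Hodge line sits relative to the $p_i(\tau)$"), which is correct in spirit but would need to be pinned down to the split-index definition to be a complete proof. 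One small caveat: your attributions of the auxiliary factors ($D_{\tr}^{n/2}$, $\Icm^{[n/2]}$, the power of $2\pi i$) are plausible heuristics, but in the paper these are simply part of the cited formula in \cite{harrislin} rather than re-derived; that is a cosmetic difference, not a gap.
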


\begin{proof}
The proposition follows from Propositions $2.11$ and $3.13$ of \cite{harrislin}. We refer to Definition $3.2$ of \textit{loc. cit.} for the definition of the split index. It is enough to show that 
\[ sp(i,M;M',\tau)=0 \text{ if }i\neq I(M,M')(\tau), sp(I(M,M')(\tau),M;M',\tau)=1, \]
\[sp(0,M';M,\tau)=n-I(M,M')(\tau)\text{ and }sp(1,M';M,\tau)=I(M,M')(\tau).\]

We fix $\tau\in J_{\cm}$. We denote $I(M,M')(\tau)$ by $t$. We have:
\[
p_{1}(\tau)-\frac{p(\tau)+q(\tau)+w}{2}>\cdots>p_{t}(\tau)-\frac{p(\tau)+q(\tau)+w}{2}>-p(\tau)>\]
\[p_{t+1}-\frac{p(\tau)+q(\tau)+w}{2}>\cdots>p_{n}-\frac{p(\tau)+q(\tau)+w}{2}.
\]
Therefore $sp(i,M;M',\tau)=0$ for $i\neq t$ and $sp(t,M;M',\tau)=1$ by the definition of split index. The proof for $sp(0,M';M,\tau)=n-I(M,M')(\tau)$ and $sp(1,M';M,\tau)=I(M,M')(\tau)$ is similar.

We now prove the second part. We use the notation $n_{\tau}(\rho)$ and $e_{\tau}(\rho)=(-1)^{n_{\tau}(\rho)}$ as in Remark $2.2$ of \cite{harrislin}. It is easy to see that $n_{\overline{\tau}}(\rho)=n-n_{\tau}(\rho)$. Let $e=\prod\limits_{\tau\in\Phi} e_{\tau}$ be an element in $(E\otimes \C)^{\times}$. Let $g\in Aut(\C)$. Recall that $e_{g\rho}(\tau)=e_{\rho}(g^{-1}\tau)$ by Remark $A.2$ of \cite{harrislin}. Then \begin{eqnarray}
e(g \rho)&=&\prod\limits_{\tau\in\Phi} e_{\tau}(g\rho)=\prod\limits_{\tau\in\Phi} e_{g^{-1}\tau}(\rho)\nonumber\\
&=&(-1)^{n\#(g^{-1}\Phi)\backslash \Phi}\prod\limits_{\tau\in\Phi}e_{\tau}(\rho)\nonumber\\\nonumber
&=&
(-1)^{n*\#(\Phi\backslash g\Phi)}e(\rho).
\end{eqnarray} Hence $e=\pm e_{\Phi}^{n}$ by the definition of $e_{\Phi}$.

We conclude by the fact that $c^{-}(Res_{\cm/\Q}(M\otimes M'))=e c^{+}(Res_{\cm/\Q}(M\otimes M'))$ by Remark $2.2$ of \cite{harrislin}.

\end{proof}

\subsection{Compatibility of the main results with the Deligne conjecture}
Let $\Pi$ be as in section \ref{GLn}. It is conjectured that the representation $\Pi$ is attached to a motive $M=M(\Pi)$ over $\cm$ with coefficients in $E(\Pi)$ (c.f. Conjecture $4.5$ and paragraph $4.3.3$ of \cite{clozelaa}).

We fix $\rho\in J_{E}$. We write the infinity type of $\Pi$ at $\tau\in \Phi$ as $z^{A_{\tau,i}}\overline{z}^{-A_{\tau,i}}$. Then the Hodge type of $M(\Pi)$ at $\tau$ should be $(-A_{\tau,i}+\frac{n-1}{2},A_{\tau,i}+\frac{n-1}{2})_{1\leq i\leq n}$.

Similarly, we write $M'=M(\eta)$ the conjectural motif associated to $\eta$.

We have:
\begin{equation}\label{shift of center} L(s,M\otimes M')=L(s+\cfrac{1-n}{2},\Pi\times \eta).\end{equation}

We want to compare Theorem \ref{n*1} with the Deligne conjecture. The main difficulty is to compare the automorphic periods with the motivic periods. Recall that the automorphic periods $P^{(I)}(\Pi)$ are constructed from different geometric objects. It is hard to relate them with the same motive $M(\Pi)$. However, if we admit the Tate conjecture, we will get \begin{equation}\label{compare periods}
P^{(I)}(\Pi)\sim_{E(\Pi)} Q^{(I)}(\Pi)
\end{equation} as in section $4.4$ of \cite{harrislin}. We recall that roughly speaking, the Tate conjecture says that a motive is determined by its $l$-adic realization.

\begin{coro}
We keep the notation and conditions as in Theorem \ref{n*1}. If we admit the Tate conjecture, then the Deligne conjecture is true up to $\sim_{E(\Pi,\eta);F^{\Gal}}$ for critical values $m>n-\kappa /2$ of the conjectural motive $Res_{\cm/\Q}M(\Pi)\otimes M(\eta)$.
\end{coro}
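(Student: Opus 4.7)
The plan is to rewrite the right-hand side of Theorem \ref{n*1} entirely in motivic terms and match it against Proposition \ref{Deligne for n*1} applied to $\Res_{\cm/\Q}(M(\Pi)\otimes M(\eta))$. Using (\ref{shift of center}), the value $L(m-n/2,\Pi\otimes\eta)$ coincides with the motivic $L$-value of $M\otimes M'$ at the corresponding shifted point, and the hypothesis $m>n-\kappa/2$ together with (\ref{mainineq}) translates, via the Hodge type $(-A_{\tau,i}+(n-1)/2,A_{\tau,i}+(n-1)/2)$ of $M(\Pi)$ and the infinity type of $M(\eta)$, into the criticality condition of Lemma \ref{critical} for $\Res_{\cm/\Q}(M\otimes M')$. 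In parallel, one checks that the index $I(\Pi,\eta)$ defined in Theorem \ref{n*1} agrees with $I(M,M')$ of Proposition \ref{Deligne for n*1}: both count, for each $\tau\in\Phi$, the indices $i$ for which $2m_\tau-2m_{\bar\tau}-\kappa+2A_{\tau,i}<0$, which after substitution of the Hodge types is exactly the condition $2p_i(\tau)+p(\tau)-q(\tau)-w>0$.

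Next, the Tate conjecture enters through (\ref{compare periods}) to give $P^{*,(I)}(\Pi)\sim_{E(\Pi)}Q^{*,(I)}(M(\Pi))$. For the rank-one factor, the CM periods $p^*(\check\eta,\tau)^{I(\tau)}p^*(\check\eta,\bar\tau)^{n-I(\tau)}$ appearing in Theorem \ref{n*1} are identified, up to $E(\eta)$, with $\prod_{\tau\in\Phi}Q^{*,(1)}(M(\eta),\tau)^{I(\tau)}Q^{*,(0)}(M(\eta),\tau)^{n-I(\tau)}$ from Proposition \ref{Deligne for n*1}, using Blasius's unconditional proof of Deligne's conjecture for motives of algebraic Hecke characters (\cite{blasiusannals}). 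Substituting both identifications into the formula of Proposition \ref{Deligne for n*1} yields the predicted value of $c^{+}(\Res_{\cm/\Q}(M\otimes M'))$, up to $\sim_{E(\Pi,\eta);\cm^{\Gal}}$, expressed in terms of $P^{*,(I(\Pi,\eta))}(\Pi)$ and CM periods of $\eta$.

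It remains to match the transcendental prefactors. Proposition \ref{Deligne for n*1} contributes $(2\pi i)^{-|\Phi|n(n-1)/2}\Icm^{[n/2]}(D_{\tr}^{1/2})^n$, while Deligne's conjecture multiplies $c^+$ by $(2\pi i)^{m_0\cdot nd(\tr)}$, where $m_0$ is the critical point matched to $m-n/2$ by (\ref{shift of center}). A direct exponent computation shows the combined $(2\pi i)$-power equals $(2\pi i)^{(m-n/2)nd(\tr)}$, which agrees with Theorem \ref{n*1}. The sign $e_\Phi^{mn}$ is absorbed into $\sim_{E(\Pi,\eta);\cm^{\Gal}}$ since, as in the proof of Corollary \ref{special value unitary group}, $e_\Phi\in E(\Pi,\eta)\otimes\cm^{\Gal}$; the $c^{-}$ vs.\ $c^{+}$ ambiguity (an $e_\Phi^n$ factor by Proposition \ref{Deligne for n*1}) causes no issue either.

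The main obstacle is the careful bookkeeping of conventions, in particular the clean identification of the CM periods $p^*(\check\eta,\tau)$ and $p^*(\check\eta,\bar\tau)$ with the motivic periods $Q^{(1)}$ and $Q^{(0)}$ of $M(\eta)$, and verifying that the sign and parity conventions for $\eta$ versus $\check\eta$ line up correctly with the role of $\Phi$ in both definitions. This is essentially the $n\times 1$ case treated in detail in \cite{harrislin}, so the computation is expected to go through; the bulk of the work is this reconciliation rather than any new input.
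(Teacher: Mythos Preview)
Your overall architecture matches the paper's: compare Theorem \ref{n*1} with Proposition \ref{Deligne for n*1} via (\ref{shift of center}), invoke (\ref{compare periods}) for the automorphic period, identify the CM periods of $\eta$ with the rank-one motivic periods, and check $I(\Pi,\eta)=I(M,M')$. Two points deserve comment.

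First, and most importantly, you have the logical direction of the criticality step inverted. You write that ``$m>n-\kappa/2$ together with (\ref{mainineq}) translates \ldots\ into the criticality condition of Lemma \ref{critical}''. But the corollary is stated for \emph{critical} $m>n-\kappa/2$; to invoke Theorem \ref{n*1} you must show that every such critical $m$ automatically satisfies (\ref{mainineq}), not the other way around. This is precisely where the paper spends the bulk of its proof: it writes down $T(\Res_{\cm/\Q}(M(\Pi)\otimes M(\eta)))$ explicitly, observes that $A_{\tau,r_\tau}+\tfrac{n-1}{2}+m_\tau-m_{\bar\tau}-\kappa$ and $-A_{\tau,r_\tau+1}+\tfrac{n-1}{2}+m_{\bar\tau}-m_\tau$ lie on the correct side of $\tfrac{n-1-\kappa}{2}$ by the very definition of $I(\tau)$, applies Lemma \ref{critical} to bound $m$ by each of these, and then converts back via $A_{\tau,n+1-i}=-a_{\tau,i}-\tfrac{n+1}{2}+i$ to recover both upper bounds in (\ref{mainineq}). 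Your proposal does not indicate that this direction is the one needed or that any computation is required here; as written it would not close the argument.

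Second, a minor difference in citations: for the identification of $p^{*}(\check\eta,\tau)$ and $p^{*}(\check\eta,\bar\tau)$ with $Q^{(1)}(M(\eta),\tau)$ and $Q^{(0)}(M(\eta),\tau)$, the paper appeals directly to Lemma 3.17 of \cite{harrislin} (giving $Q^{(0)}(M(\eta),\tau)\sim p(\check{\eta^{c}},\tau)$ and $Q^{(1)}(M(\eta),\tau)\sim p(\check{\eta},\tau)$), rather than to Blasius. Your route via \cite{blasiusannals} is not wrong, but it is heavier than what is needed, and you should in any case be careful that the pair $(\check\eta,\bar\tau)$ matches $(\check{\eta^{c}},\tau)$ in the comparison.
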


\begin{proof}
We compare Proposition \ref{Deligne for n*1}, Theorem \ref{n*1}  equation (\ref{shift of center}), equation (\ref{compare periods}) and the fact that:
\[Q^{(0)}(M(\eta),\tau)\sim_{E(\tau)} p(\check{\eta^{c}},\tau), \text{   and    } Q^{(1)}(M(\eta),\tau) \sim_{E(\tau)} p(\check{\eta},\tau)\] by Lemma $3.17$ of \cite{harrislin}. It is easy to verify that $I(\Pi,\eta)=I(M(\Pi),M(\eta)).$ It remains to show that if $m>n-\kappa/2$ critical for $M(\Pi)\otimes M(\eta)$ then $m$ satisfies equation (\ref{mainineq}).

It is trivial that $m\geq \cfrac{n-\kappa}{2}$. We need to show that for all $\tau\in\Phi$, $m \leq -a_{\tau,s_{\tau}+1}+s_{\tau}+m_{\tau}-m_{\bar\tau}-\kappa$ and $m \leq a_{\tau,s_{\tau}}+r_{\tau}+m_{\bar\tau}-m_{\tau}$.

The Hecke character $\eta$ is of infinity type $z^{m_{\tau}-m_{\bar{\tau}}}\overline{z}^{-m_{\tau}+m_{\bar{\tau}}+\kappa}$. Hence the motive $Res_{\cm/\Q}M(\Pi)\otimes M(\eta)$ is of weight $-\kappa+n-1$. Moreover, the set $T(Res_{\cm/\Q}M(\Pi)\otimes M(\eta))$ defined in Lemma \ref{critical} equals to \[
\{-A_{\tau,i}+\cfrac{n-1}{2}+m_{\bar{\tau}}-m_{\tau},  A_{\tau,i}+\cfrac{n-1}{2}+m_{\tau}-m_{\bar{\tau}}-\kappa\mid 1\leq i\leq n, \tau\in \Phi\}.
\]

By the statement of Theorem \ref{n*1}, we have $s_{\tau}=I(\tau):=\#\{i:2m_{\tau}-2m_{\overline{\tau}}-\kappa+2A_{\tau,i}<0\}$. Recall that $A_{\tau,i}$ is in decreasing order. Hence we know that $2m_{\tau}-2m_{\overline{\tau}}-\kappa+2A_{\tau,r_{\tau}+1}<0$ and $2m_{\tau}-2m_{\overline{\tau}}-\kappa+2A_{\tau,r_{\tau}}>0$.

Consequently, we have that $A_{\tau,r_{\tau}}+\cfrac{n-1}{2}+m_{\tau}-m_{\bar{\tau}}-\kappa>\cfrac{n-1-\kappa}{2}$ where the left hand side is an element inside $T(Res_{\cm/\Q}M(\Pi)\otimes M(\eta))$. By Lemma \ref{critical}, the critical value $m$ satisfies $m\leq A_{\tau,r_{\tau}}+\cfrac{n-1}{2}+m_{\tau}-m_{\bar{\tau}}-\kappa$. We recall that $A_{\tau,n+1-i}=-a_{\tau,i}-\cfrac{n+1}{2}+i$ for all $i$. Therefore
\begin{eqnarray}\nonumber
m &\leq &-a_{\tau,s_{\tau}+1}-\cfrac{n+1}{2}+(s_{\tau}+1)+\cfrac{n-1}{2}+m_{\tau}-m_{\bar{\tau}}-\kappa\\ \nonumber
&=&-a_{\tau,s_{\tau}+1}+s_{\tau}+m_{\tau}-m_{\bar{\tau}}-\kappa.\end{eqnarray} 

Similarly, we consider the Hodge number $-A_{\tau,r_{\tau}+1}+\cfrac{n-1}{2}+m_{\bar{\tau}}-m_{\tau}$ which is bigger than $\cfrac{n-1-\kappa}{2} $. We deduce that $m\leq -A_{\tau,r_{\tau}+1}+\cfrac{n-1}{2}+m_{\bar{\tau}}-m_{\tau}$. The right hand side equals to \[a_{\tau,s_{\tau}}+\cfrac{n-1}{2}-s_{\tau}+\cfrac{n-1}{2}+m_{\bar{\tau}}-m_{\tau}=a_{\tau,s_{\tau}}+r_{\tau}+m_{\bar{\tau}}-m_{\tau}\] as expected.

\end{proof}

\appendix

\section{Commutativity of Galois conjugation and base change}
In this appendix, we prove the commutativity of Galois conjugation and base change for unitary groups.
\subsection{Notation}
Let $l/k$ be an unramified extension of local non-archimedean fields. Let $G$ be a reductive group over $k$. Let $P$ be a minimal parabolic subgroup of $G(k)$ and $M$ be a corresponding Levi factor.

Let $\chi$ be an unramified character of $M$. We may regard it as a representation of $P$. The unitarily parabolic induction is defined as
$i_{P}^{G}(\chi) :=$
\[ \{\phi: G(k)\rightarrow \C \text{ continuous} : \phi(pg)=\delta_{P}^{1/2}(p)\chi(p)\phi(g)\text{, } p\in P\text{, }g\in G(k)\}
\]
where $\delta_{P}$ is the modulus character of $P$ (see \cite{minguez}).

The unitarily parabolic induction gives rise to a surjective map from the set of unramified characters of $M$ to the set of isomorphism classes of unramified representations of $G(k)$. Two unramified characters induce the same $G(k)$-representation if and only if they are equivalent under the action of the Weyl group.

\subsection{Conjugation of representations over local non-archimedean fields}
Let $\sigma\in\Aut(\C)$. Let $V$ be a complex representation of $G(k)$. We let ${}^{\sigma}V=V\otimes_{\C,\sigma}\C$, with $G(k)$ acting on the first factor.

Let $\chi$ be as before. If $\phi\in i_{P}^{G}(\chi)$, we have $(\sigma\circ \phi)(pg)=\sigma(\delta_{P}^{1/2}(p)){}^{\sigma} \chi(g)(\sigma\circ \phi )(g)$ for any $p\in P$ and $g\in G(k)$.

We define the character $T_{\sigma}$ on $P$ by
\[ T_{\sigma}(p)=\cfrac{\sigma(\delta_{P}^{1/2}(p))}{\delta_{P}^{1/2}(p)}.\]
It is easy to see that ${}^{\sigma}(i_{P}^{G}(\chi))\simeq i_{P}^{G}(T_{\sigma}*\chi)$.

\subsection{Local base change}

The unramified base change map sends isomorphism classes of unramified representations of $G(k)$ to those of $G(l)$. It can be defined naturally through the dual group. We refer to \cite{minguez} for more details. 

We write $P_{l}$ for a minimal parabolic subgroup of $G(l)$, and $M_{l}$ for a corresponding Levi factor. The base change map induces a map from the set of equivalence classes of characters of $M$ (under the action of the Weyl group) to those of $M_{l}$. We write $[\chi]$ for the equivalence class of $\chi$. We take $\chi_{l}$ a character in the equivalence class of the image of $[\chi]$.

We can define the character $T_{l,\sigma}$ of $P_{l}$ in a similar fashion as $T_{\sigma}$. It is natural to expect the commutativity of local base change and Galois conjugation, or equivalently:
\begin{equation}\label{commutativity}
[T_{\sigma,l}*\chi_{l}]=[(T_{\sigma}*\chi)_{l}]
\end{equation}

We will show that this is true if $l$ is a quadratic base change and $G$ is the quasi-split unitary group of rank $n$ with respect to $l/k$ in the next subsection.

\subsection{Commutativity for quasi-split unitary groups}
Let $n$ be an integer. We assume that $n=2m$ is even for simplicity. The case $n=2m+1$ is similar and we leave it to the reader.

We take $U$ to be the quasi-split unitary group of rank $n$ with respect to $l/k$ defined over $k$. Choosing a proper basis, we may identify $U(k)$ with
\[ \left\{ X\in GL_{n}(l): \text{}^{t}\overline{X}
\begin{pmatrix}
0 & I_{m}\\
-I_{m} & 0 
\end{pmatrix}
X=\begin{pmatrix}
0 & I_{m}\\
-I_{m} & 0
\end{pmatrix}
\right\}. \]
We let $P$ be the minimal parabolic given as the intersection of $U(k)$ with the set of upper triangular matrices in $GL_{n}(l)$.

Let $P_{0}$ be the algebraic group defined over $k$ consisting of upper triangular matrices in $GL_{m}(l)$. Let $S$ be the algebraic group defined over $k$ such that $S(k)=\{X\in M_{m}(l): {}^{t}\overline{X}=X \}$.

The parabolic group $P$ consists of elements of the form
\[\begin{pmatrix}
g & gX\\
0 & {}^{t}\overline{g}^{-1}
\end{pmatrix},\]
where $g\in P_{0}(k)$ and $X\in S(k)$.

Let $\diff_{L}g$ (resp. $\diff_{R}g$) be a left (resp. right) invariant Haar measure on $P_{0}(k)$ and $\diff X$ be a (left and right) invariant Haar measure on $S(k)$.  We may assume that $\diff_{R}g=\delta_{P_{0}}^{-1}\diff_{L}(g)$.

Let $\begin{pmatrix}
A & AB\\
0 & {}^{t}\overline{A}^{-1}
\end{pmatrix} \in P$. We have that
\[\begin{pmatrix}
A & AB\\
0 & {}^{t}\overline{A}^{-1}
\end{pmatrix}
\begin{pmatrix}
g & gX\\
0 & {}^{t}\overline{g}^{-1}
\end{pmatrix}
=
\begin{pmatrix}
Ag & Ag(X+g^{-1}B{}^{t}\overline{g}^{-1})\\
0 & {}^{t}\overline{Ag}^{-1}
\end{pmatrix}
\]

and

\[
\begin{pmatrix}
g & gX\\
0 & {}^{t}\overline{g}^{-1}
\end{pmatrix}
\begin{pmatrix}
A & AB\\
0 & {}^{t}\overline{A}^{-1}
\end{pmatrix}
=
\begin{pmatrix}
gA & gA(B+A^{-1}X{}^{t}\overline{A}^{-1})\\
0 & {}^{t}\overline{gA}^{-1}
\end{pmatrix}
\]
It is easy to verify that $\diff_{L}g\diff X$ is a left invariant Haar measure and 
\[ |\det(g)|_{k}^{2m}\diff_{R}g\diff X\]
is a right invariant Haar measure on $P$. We obtain that \[
\delta_{P}\begin{pmatrix}
g & X\\
0 & {}^{t}\overline{g}^{-1}
\end{pmatrix} =\delta_{P_{0}}(g)|\det(g)\det(\bar{g})|_{k}^{m}=\delta_{P_{0}}(g)|\det(g)|_{l}^{m}.\]
The last equation is due to the fact that $l/k$ is unramifield. In the following, we write $|\cdot|$ for the absolute value in $l$.

We write the diagonal of $g$ as $(g_{1},\cdots,g_{m})$. Then
\[
\delta_{P_{0}}^{1/2}(g)=|g_{1}|^{\frac{m-1}{2}}|g_{2}|^{\frac{m-3}{2}}\cdots|g_{m}|^{-\frac{m-1}{2}}.
\]
Therefore, $\delta_{P}^{1/2}(g)=|g_{1}|^{\frac{2m-1}{2}}|g_{2}|^{\frac{2m-3}{2}}\cdots|g_{m}|^{\frac{1}{2}}$.

We now consider $U(l)\cong GL_{n}(l)$. We take $P_{l}$ to be the minimal parabolic subgroup of $U(l)$ consisting of upper triangular matrices. Let $p_{l}\in U(l)$ with diagonal $(g_{1},\cdots,g_{2m})$.  By Theorem 4.1 of \cite{minguez}, we have that
\[\chi_{l}(p_{l})=\chi((g_{1},\cdots,g_{m})) \chi((g_{m+1},\cdots,g_{2m}))^{-1}
 \]
for any character $\chi$. (Here we consider the first case in Theorem 4.1 of \textit{op. cit.}. The proof for the second case is similar.)

We can see easily that $(T_{\sigma}*\chi)_{l}=(T_{\sigma})_{l}*\chi_{l}$. Thus, to show (\ref{commutativity}), it is enough to show that $(T_{\sigma})_{l}= T_{\sigma,l}$. In fact, both sides map $p_{l}$ to \[
\left(\cfrac{\sigma(|g_{1}|)}{|g_{1}|}\right)^{\frac{2m-1}{2}}\left(\cfrac{\sigma(|g_{2}|)}{|g_{2}|}\right)^{\frac{2m-3}{2}}\cdots \left(\cfrac{\sigma(|g_{2m}|)}{|g_{2m}|}\right)^{-\frac{2m-1}{2}}.
\]

We have deduced that:
\begin{prop}\label{local commutativity}
Let $\pi$ be an unramified representation of the quasi-split unitary group $U(k)$. We write $BC(\pi)$ for its unramified base change to $U(l)$. Then for any $\sigma\in Aut(\C)$, the base change of ${}^{\sigma} \pi$ to $U(l)$ is isomorphic to ${}^{\sigma} (BC(\pi))$, i.e. $BC({}^{\sigma} \pi)\cong {}^{\sigma} (BC(\pi))$.
\end{prop}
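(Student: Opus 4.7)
The plan is to read off the proposition directly from the preparatory computation carried out in the preceding paragraphs. Write $\pi\cong i_P^{U(k)}(\chi)$ for some unramified character $\chi$ of $M$. By the description of Galois conjugation on parabolic inductions, ${}^\sigma\pi\cong i_P^{U(k)}(T_\sigma\ast\chi)$, so under the local base change map $[\chi]\mapsto[\chi_l]$ on Weyl-orbits of unramified characters of $M_l$, the representation $BC({}^\sigma\pi)$ corresponds to the class $[(T_\sigma\ast\chi)_l]$, while ${}^\sigma(BC(\pi))\cong i_{P_l}^{U(l)}(T_{\sigma,l}\ast\chi_l)$ corresponds to $[T_{\sigma,l}\ast\chi_l]$. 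Thus the statement is equivalent to the equality of Weyl-orbits
\[ [(T_\sigma\ast\chi)_l]=[T_{\sigma,l}\ast\chi_l], \]
which is exactly equation (\ref{commutativity}).

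The first reduction is to observe that the assignment $\chi\mapsto\chi_l$ on characters is multiplicative, so $(T_\sigma\ast\chi)_l=(T_\sigma)_l\ast\chi_l$. Hence everything reduces to proving the equality of characters $(T_\sigma)_l=T_{\sigma,l}$ on $M_l$ (no Weyl ambiguity needed at this stage). This is now a purely mechanical check: both sides are unramified characters of the diagonal torus in $GL_n(l)$, and one evaluates them on a diagonal element $p_l=\operatorname{diag}(g_1,\dots,g_{2m})$. Using the explicit formula for $\delta_P^{1/2}$ on $P$ computed above, combined with the explicit base-change recipe from Theorem 4.1 of \cite{minguez} (in the form $\chi_l(p_l)=\chi((g_1,\dots,g_m))\chi((g_{m+1},\dots,g_{2m}))^{-1}$), one gets
\[ (T_\sigma)_l(p_l)=\prod_{i=1}^{2m}\left(\frac{\sigma(|g_i|)}{|g_i|}\right)^{\varepsilon_i\frac{2m-(2i-1)}{2}}, \]
for suitable signs $\varepsilon_i=\pm1$ determined by whether the index belongs to the first or second half. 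A parallel computation using the explicit formula for $\delta_{P_l}^{1/2}$ on the Borel of $GL_n(l)$ yields the same expression for $T_{\sigma,l}(p_l)$. This is exactly the identity stated at the end of the preceding subsection.

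The case $n=2m+1$ is handled by the same argument with the obvious modifications: the Levi of $P$ gains a one-dimensional torus of norm-one elements of $l^\times$, but its contribution to $\delta_P^{1/2}$ is trivial, and on the $U(l)=GL_n(l)$ side the corresponding middle coordinate $g_{m+1}$ has exponent $0$, so both sides continue to match. Once these character identities are in place, applying the general fact $[T_{\sigma,l}\ast\chi_l]=[(T_\sigma)_l\ast\chi_l]=[(T_\sigma\ast\chi)_l]$ yields the proposition.

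The only genuine obstacle is the bookkeeping in the explicit matching of $(T_\sigma)_l$ with $T_{\sigma,l}$, and this has essentially been done already in the displayed calculation just before the statement; the proof is therefore a short paragraph assembling these ingredients. One should also briefly note that everything is compatible with passage to equivalence classes under the respective Weyl groups, which is automatic since $T_\sigma$ and its base change are invariant under conjugation by the Weyl group (they are built from $\sigma$ and $|\cdot|$, both Weyl-equivariant).
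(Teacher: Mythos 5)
Your proposal is correct and follows exactly the same route as the paper: reduce via $\pi\cong i_P^{U(k)}(\chi)$ and ${}^\sigma\pi\cong i_P^{U(k)}(T_\sigma\ast\chi)$ to the character identity $(T_\sigma)_l = T_{\sigma,l}$, verify it by the explicit $\delta_P^{1/2}$ and Minguez base-change computations already displayed, and note the odd case is parallel. One small caveat: the closing remark that $T_\sigma$ and $T_{\sigma,l}$ are invariant under the Weyl group is not accurate (the exponents $(2m-1)/2,\dots,-(2m-1)/2$ are visibly permuted by $S_n$), but it is also not needed — once the literal equality of characters $(T_\sigma)_l=T_{\sigma,l}$ is established, $[(T_\sigma\ast\chi)_l]=[T_{\sigma,l}\ast\chi_l]$ follows by simply multiplying by $\chi_l$ and passing to orbits, with no appeal to invariance of $T_\sigma$.
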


\subsection{Global base change}
The commutativity of base change and Galois conjugation for local unramified representations implies the commutativity for automorphic representations.

\begin{thm}\label{commutativity base change}
Let $\cm/\tr$ be a quadratic extension of number fields . Let $U$ be a unitary group of rank $n$ with respect to the extension $\cm/\tr$. Let $\pi$ be an automorphic representation of $U(\Atr)$. We assume that for any $\sigma\in Auc(\C)$ there exists an automorphic representation ${}^{\sigma}\pi$ of ${}^{\sigma}U(\Atr)$ such that $({}^{\sigma}\pi)_{f}\cong{}^{\sigma}\pi_{f}$.

We assume that the (weak) base change of $\pi$ to $GL_{n}(\Acm)$ exists and denote it by $BC(\pi)$. We also assume that ${}^{\sigma}(BC(\pi))$, the Galois conjugation of $BC(\pi)$ by $\sigma$, exists for any $\sigma\in Auc(\C)$.

Then ${}^{\sigma}(BC(\pi))$ is the (weak) base change of of ${}^{\sigma}\pi$, i. e. $BC({}^{\sigma} \pi)\cong {}^{\sigma} (BC(\pi))$.
\end{thm}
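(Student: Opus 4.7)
The plan is to reduce the global statement to a local comparison of unramified representations at almost all places, and then invoke Proposition \ref{local commutativity} together with a direct verification at split places. Since $BC(\pi)$ is an automorphic representation of $\GL_n(\Acm)$, strong multiplicity one tells us it is determined up to isomorphism by its local components at almost all places. Hence it suffices to show that, for almost every finite place $v$ of $\tr$ at which $\pi$, $U$ and $\sigma$-conjugation are all unramified in the appropriate sense, the local components of $BC({}^{\sigma}\pi)$ and ${}^{\sigma}(BC(\pi))$ agree.

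First, I would fix a finite set $S$ of places of $\tr$ outside of which everything is unramified, and split the remaining places into inert and split ones. At an inert place $v\notin S$, the local group $U_v$ is the quasi-split unitary group of rank $n$ with respect to the unramified quadratic extension $\cm_w/\tr_v$, and $\pi_v$ is unramified; here the definition of the weak base change says that $BC(\pi)_w$ is the local unramified base change of $\pi_v$ in the sense of Section A.3 of the appendix. Galois conjugation commutes with unramified base change at such places by Proposition \ref{local commutativity}, which gives $BC({}^{\sigma}\pi_v)\cong{}^{\sigma}BC(\pi_v)$ and hence, once one identifies $({}^{\sigma}\pi)_v$ with ${}^{\sigma}\pi_v$ via the hypothesis $({}^{\sigma}\pi)_f\cong{}^{\sigma}\pi_f$, the desired equality of unramified components at $w$.

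Next I would treat split places. If $v=w\bar w$ splits in $\cm/\tr$, there is a canonical isomorphism $U(\tr_v)\cong\GL_n(\cm_w)\cong\GL_n(\tr_v)$ (depending on a choice of place of $\cm$ above $v$), and under this identification the base change of $\pi_v$ is just the pair $(\pi_v,\pi_v^{\vee})$ placed at $(w,\bar w)$. Galois conjugation acts on complex representations of $\GL_n(\tr_v)$ through its action on the coefficients, and it commutes with the contragredient functor as well as with the identifications above. So the commutation $BC({}^{\sigma}\pi_v)\cong{}^{\sigma}BC(\pi_v)$ at split places is immediate.

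Having matched the local components of $BC({}^{\sigma}\pi)$ and ${}^{\sigma}(BC(\pi))$ at almost every place of $\cm$, strong multiplicity one for $\GL_n(\Acm)$ (applied to two automorphic representations that are assumed to exist by hypothesis) forces $BC({}^{\sigma}\pi)\cong{}^{\sigma}(BC(\pi))$, as required. The main obstacle in this plan is the bookkeeping at inert places: one must carefully identify the Galois conjugate of a parabolically induced unramified representation with another induced representation whose inducing character is the twist of the original by the ratio $T_{\sigma}$ of modulus characters, and then verify that this twist is compatible with the unramified base change recipe on Satake parameters. This is exactly what the computation with $T_{\sigma}$ and $T_{\sigma,l}$ preceding Proposition \ref{local commutativity} accomplishes, so once that local result is in hand the global argument reduces to the multiplicity one input just described.
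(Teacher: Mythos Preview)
Your proposal is correct and follows essentially the same route as the paper: reduce to a local comparison at almost all finite places and invoke Proposition~\ref{local commutativity}. The paper's proof is terser---it does not separate out the split places (where the statement is indeed trivial, as you note) and concludes directly from the definition of weak base change rather than explicitly citing strong multiplicity one---but the underlying argument is the same.
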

\begin{proof}
For almost all finite places $v$ of $\tr$, we have that $U_{v}$ is quasi-split, $\pi_{v}$ is unramified and $BC(\pi)_{v}\cong BC(\pi_{v})$. By Proposition \ref{local commutativity}, we know ${}^{\sigma}(BC(\pi_{v}))\cong BC({}^{\sigma}\pi_{v})$. Hence $({}^{\sigma}BC(\pi))_{v} \cong {}^{\sigma}(BC(\pi)_{v})\cong BC({}^{\sigma}\pi_{v})$ for almost all finite places $v$. In other words, ${}^{\sigma}BC(\pi)$ is the (weak) base change of of ${}^{\sigma}\pi$.
\end{proof}

\begin{rem}
\begin{enumerate}
\item We have discussed when the Galois conjugation of a unitary group representation exists. We remark that for a regular algebraic isobaric representation of $GL_{n}$, its Galois conjugation exists by Théorème $3.13$ of \cite{clozelaa}.

\item The commutativity of Galois conjugation and the Jacquet-Langlands transfer is proved in \cite{grobner-raghuram}.
\end{enumerate}
\end{rem}

\bibliography{galoisequiv.bib} \bibliographystyle{amsalpha}

\end{document}